\newcommand{\Ff}{\mathcal F}
 \newcommand{\Cc}{\mathcal C}
 \newcommand{\Vv}{\mathcal{V}}
  \newcommand{\MM}{\mathcal{M}}
\newcommand{\Pp}{\mathcal{P}}
 \newcommand{\RR}{\mathbf{R}}  
 \newcommand{\BB}{\mathbf{B}}  
    \newcommand{\dist}{\operatorname{dist}}
 \newcommand{\eps}{\epsilon}
 \newcommand{\Tan}{\operatorname{Tan}}
 \newcommand{\Kk}{\mathcal{K}}
 \newcommand{\UU}{\mathcal{U}}
 \newcommand{\NN}{\mathbf{N}}
\newcommand{\ee}{\mathbf e}
\def\begfig {
\begin{figure}
\small }
\def\endfig {
\normalsize
\end{figure}
}
    \newtheorem{theorem}    {Theorem}   
    \newtheorem{lemma}      [theorem]       {Lemma}
    \newtheorem{corollary}  [theorem]     {Corollary}
    \newtheorem{proposition}       [theorem]       {Proposition}
    \newtheorem{claim}{Claim}
    \newtheorem*{theorem*}{Theorem}
    \theoremstyle{definition}
    \newtheorem{definition}  [theorem] {Definition}
    \theoremstyle{definition}
    \newtheorem{remark}   [theorem]       {Remark}
\title[Generic Transversality]{Generic Transversality of Minimal Submanifolds and Generic Regularity 
of Two-Dimensional Area-Minimizing Integral Currents}
\author{Brian White}
\thanks{The author was partially supported by NSF grant DMS-1711293}
\address{Department of Mathematics\\ Stanford University\\ Stanford, CA 94305}
\email{bcwhite@stanford.edu}
\date{January 15, 2019. Revised December 2, 2019}
\subjclass[2010]{53A10 (primary), and 49Q05, 53C42 (secondary)} 
\begin{document}

\maketitle

\begin{abstract}
Suppose that $N$ is a smooth manifold with a smooth Riemannian metric $g_0$, and
that $\Gamma$ is a smooth submanifold of $N$.
This paper proves that for a generic (in the sense of Baire category) smooth metric $g$ conformal to $g_0$,
if $F$ is any simple $g$-minimal immersion of a closed manifold into $N$, then
$F$ is transverse to $\Gamma$, and $F$ is self-transverse.

The paper also proves that for a generic ambient metric, every $2$-dimensional surface
(integral current or flat chain mod $2$) without boundary that minimizes area in its homology class
has support equal to a smoothly embedded minimal surface.
\end{abstract}

\section{Introduction}

In this paper, we prove:

\begin{theorem}\label{main-theorem}
Suppose that $N$ is a smooth manifold with a smooth Riemannian metric $g_0$, and
that $\Gamma$ is a smooth submanifold of $N$.
For a generic (in the sense of Baire category) smooth metric $g$ conformal to $g_0$,
if $F$ is any simple $g$-minimal immersion of a closed manifold into $N$, then
\begin{enumerate}[\upshape (1)]
\item   $F$ is transverse to $\Gamma$, and
\item  $F$ is self-transverse.
\end{enumerate}
\end{theorem}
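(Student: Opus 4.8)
The plan is to run an infinite-dimensional parametric transversality argument of Sard--Smale type, with the conformal factor as the perturbation parameter. Since there are only countably many diffeomorphism types of closed smooth manifolds, it suffices to produce, for each fixed closed smooth manifold $M$, a residual set $\mathcal{R}_M$ of smooth metrics conformal to $g_0$ for which (1) and (2) hold for every simple $g$-minimal immersion of $M$; then $\bigcap_M\mathcal{R}_M$ is the desired set. Fix $M$ and work first with metrics $g=e^{2f}g_0$, $f\in C^{k,\alpha}(N)$, deducing the $C^\infty$ statement at the end by the standard device for passing from $C^{k,\alpha}$ to $C^\infty$ genericity (intersecting over all $k$). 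A now-standard computation shows that the set $\mathcal{M}_M$ of pairs $(f,F)$ with $F$ a simple $e^{2f}g_0$-minimal immersion of $M$, immersions identified under reparametrization of $M$ (which is where simplicity enters, to rule out bad symmetries), is a smooth, second countable Banach manifold, and that the projection $\pi\colon\mathcal{M}_M\to\mathcal{G}$ onto the Banach space $\mathcal{G}$ of conformal factors is Fredholm of index $0$: the linearization of the mean-curvature operator in the immersion direction is the Jacobi operator $\mathcal{L}_F$, a self-adjoint elliptic operator of index $0$ on $M$, and the conformal direction always covers its finite-dimensional cokernel, so the joint linearization is surjective. It is harmless to intersect $\mathcal{R}_M$ with the statement that for generic conformal $g$ every simple closed $g$-minimal immersion is nondegenerate (the conformal analogue of the bumpy-metrics theorem, proved by the same Sard--Smale argument), so we may also assume throughout that $\mathcal{L}_F$ is invertible.

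The analytic heart is that conformal directions alone already move minimal immersions with enough freedom. Up to a dimensional constant, an $m$-dimensional submanifold is $e^{2f}g_0$-minimal exactly when its $g_0$-mean curvature vector equals the $g_0$-normal part of $\nabla^{g_0}f$ along it; linearizing at a nondegenerate $g$-minimal immersion $F$ shows that an infinitesimal conformal change $\dot f$ deforms $F$ by the normal field $V_{\dot f}=\mathcal{L}_F^{-1}\bigl(c\,(\nabla\dot f)^{\perp}|_F\bigr)$, where $(\,\cdot\,)^{\perp}|_F$ denotes the normal part along $F$. Because $M$ is compact and $\dot f$ ranges over all of $C^\infty(N)$, the family $\{(\nabla\dot f)^{\perp}|_F\}$ contains every smooth normal field supported near a non-self-intersection point of $F$; combining this with elliptic regularity and an analysis of the Schwartz kernel of $\mathcal{L}_F^{-1}$ (a short unique-continuation-type argument), one obtains the key fact: at any point $p$ of $M$ that is not a self-intersection point, and more generally at any finite set of points $p_1,\dots,p_r$ with pairwise distinct images, the $1$-jets of $V_{\dot f}$ at the $p_i$ can be prescribed arbitrarily and independently by a suitable choice of $\dot f$.

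Granting this, conclusion (1) follows by parametric transversality applied to the evaluation map $(f,F,p)\mapsto F(p)$ together with its $1$-jet lift $(f,F,p)\mapsto\bigl(F(p),\,dF_p(T_pM)\bigr)$ into the Grassmann bundle $G_m(TN)$: the normal freedom above, plus the tangential directions from varying $p$, makes these maps submersions over $\Gamma$, so the preimage of the stratified set $\{(x,P):x\in\Gamma,\ P+T_x\Gamma\neq T_xN\}$ is a union of finitely many submanifolds of $\mathcal{M}_M\times M$, each of codimension exceeding $\dim M$ (a Schubert-type count, which in the range $\dim M+\dim\Gamma<\dim N$ simply says transversality means $F(M)\cap\Gamma=\varnothing$). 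Since the projection $\mathcal{M}_M\times M\to\mathcal{G}$ is Fredholm of index $\dim M$, its restriction to each such submanifold is Fredholm of negative index, so by Sard--Smale each has meager image in $\mathcal{G}$, and the union of these images, over the finitely many strata and the countably many $M$, is still meager. Conclusion (2) is proved the same way, replacing the evaluation map by $(f,F,p,q)\mapsto\bigl(F(p),F(q)\bigr)$ on $\mathcal{M}_M\times(M\times M\setminus\Delta_M)$, asked to be transverse to $\Delta_N\subset N\times N$, together with the analogous maps on $\mathcal{M}_M\times(M^r\setminus\Delta)$ relative to the small diagonals for higher-multiplicity points; the independent prescription of $1$-jets at distinct preimages is exactly the transversality input, and the Schubert count again forces negative Fredholm index once the general-position condition on the sheets is imposed, so that ``self-transverse'' (in particular, embeddedness when $2\dim M<\dim N$) comes out as the generic condition. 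Putting the two together, the set of conformal metrics failing (1) or (2) for some simple $g$-minimal immersion of $M$ is meager; $\mathcal{R}_M$ is its complement.

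The step I expect to require the most care is making the jet-prescription of the second paragraph precise, and uniform enough to serve as the transversality input at every point of the bad strata, especially at self-intersection points $F(p)=F(q)$ (and multiple points), where a single conformal factor near the common image point unavoidably perturbs $F$ along all sheets through that point at once. What rescues the argument is that the coupling occurs only through the germ of $\dot f$ at the image point, and since $p\neq q$ the distinct sheets (distinct because $F$ is simple) receive controllable, essentially independent data; turning this into surjectivity of the linearized maps onto the normal spaces of the stratified diagonals, and then organizing the resulting strata so that the Sard--Smale step through the Fredholm projection is legitimate, is where the real work lies. A secondary but non-trivial point is the clean construction of $\mathcal{M}_M$ as a second countable Banach manifold: the reparametrization quotient, the use of simplicity, and the passage from $C^{k,\alpha}$ to $C^\infty$.
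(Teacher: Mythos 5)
Your overall architecture (countably many diffeomorphism types, a second countable Banach manifold of pairs $(f,[F])$ with Fredholm projection of index $0$, Sard--Smale) is a legitimate variant of what the paper does; the paper packages the Sard--Smale step into a black-box Bumpy Metrics Theorem and then works with \emph{finite-dimensional} families of conformal perturbations plus the ordinary parametric transversality theorem, which avoids your stratified-jet-space bookkeeping. But there is a genuine gap at what you correctly identify as the analytic heart, and your proposed repair of it is wrong. Your ``key fact'' prescribes $1$-jets of $V_{\dot f}$ only at points $p_1,\dots,p_r$ with \emph{pairwise distinct images} --- which is exactly the case you do not need for conclusion (2). For self-transversality you must prescribe normal data independently at points $p\ne q$ with $F(p)=F(q)$, and your explanation of why this is possible (``the coupling occurs only through the germ of $\dot f$ at the image point, and \dots the distinct sheets receive controllable, essentially independent data'') is false in precisely the relevant situation: the forcing term produced by a conformal variation is $c\,(\nabla\dot f)^\perp$ evaluated at the image point, so if the two sheets through $F(p)=F(q)$ have \emph{equal} tangent planes (the tangential self-intersection you are trying to rule out), the forcing terms on the two sheets are literally identical for every $\dot f$, not independent. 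This is the obstruction the paper records explicitly: $G(\gamma)(p)=G(\gamma)(q)$ whenever $F(p)=F(q)$ and $\Tan(F,p)=\Tan(F,q)$.

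The correct mechanism, which your sketch does not supply, is nonlocal and lives on the domain $M$ rather than on the image. Using simplicity, one chooses an open $W\subset N$ on which $F$ restricts to an embedding of $U=F^{-1}(W)$, supports $\dot f$ in $W$, and observes that the induced forcings $J v$ then range over \emph{all} normal fields supported in $U$. The question becomes: do solutions $v$ of $Jv=(\text{something supported in }U)$ have unrestricted $1$-jets at an arbitrary finite subset $S$ of $M$ (with no condition on the images $F(S)$)? That is a Runge-type approximation theorem for the Jacobi operator --- the paper derives it from Lax's theorem that unique continuation implies the Runge property --- and it is the step that genuinely decouples the sheets, because $p$ and $q$ are far apart in $M$ even though $F(p)=F(q)$. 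Your parenthetical ``analysis of the Schwartz kernel of $\mathcal{L}_F^{-1}$ (a short unique-continuation-type argument)'' gestures in the right direction but does not substitute for this: without the Runge/Lax input (or an equivalent), the surjectivity of your linearized maps onto the normal spaces of the diagonal strata --- the entire transversality input for (2) --- is unproved, and the heuristic you offer in its place is incorrect.
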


An immersion $F:M\to N$ is called {\bf self-transverse}  provided the following holds: if  $U$ and $W$
are disjoint open sets in $M$ and if the restrictions $F|_U$ and $F|_W$ are embeddings, then $F(U)$ and $F(W)$ are
transverse.

An immersion $F:M\to N$ is called {\bf simple} if each connected component of $M$ contains
a point $q$ such that $F(q)$ and $F(M\setminus \{q\})$ are disjoint.   In case $F$ is $g$-minimal
for a smooth metric $g$, unique continuation implies that if $F$ is simple, $F(q)$ and $F(M\setminus q)$
are disjoint except for a closed, nowhere dense, measure-$0$ set of $q\in M$.
Thus a $g$-minimal immersion is simple if and only if the image has multiplicity $1$ almost everywhere.

Theorem~\ref{main-theorem} is false without the word ``simple", even in the case of $1$-dimensional minimal 
submanifolds (i.e., geodesics).   For there is a nonempty open set of metrics on $N$
for which there is a closed geodesic.  If we traverse the geodesic multiple times,
the result is a closed geodesic with non-transverse self-intersections.

We also prove a stronger version of Theorem~1:

\begin{theorem}\label{strong-main-theorem}
Theorem~1 remains true with ``strongly transverse'' and ``strongly-self-transverse"
in place of ``transverse" and ``self-transverse".
\end{theorem}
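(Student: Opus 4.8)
The plan is to run the same parametric transversality (Sard--Smale) scheme used to prove Theorem~\ref{main-theorem}, but applied to $k$-tuples of sheets of $F$ through a common point rather than to pairs. Recall that ``strongly transverse'' strengthens ``transverse'' by asking that \emph{all} the branches of $F$ through a given point be simultaneously in general position: if $p_1,\dots,p_k\in M$ are distinct with $F(p_1)=\dots=F(p_k)=y$, then the subspaces $dF_{p_1}(T_{p_1}M),\dots,dF_{p_k}(T_{p_k}M)$ of $T_yN$ --- together with $T_y\Gamma$, in the case of strong transversality to $\Gamma$ --- should meet in general position, i.e.\ the codimension of the intersection of any subcollection equals the sum of the codimensions. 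Since $F$ is a proper immersion of a closed manifold, each fibre $F^{-1}(y)$ is finite, so it suffices to prove, for each fixed $k$, that the set of metrics admitting a simple minimal immersion with $k$ non-generic branches through a point is meager, and then to intersect over all $k$.

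Accordingly, for each $k$ I would form the universal space
\[
\Mm_k=\{(g,F,p_1,\dots,p_k)\ :\ F\text{ a simple }g\text{-minimal immersion of a closed manifold},\ p_i\text{ distinct},\ F(p_1)=\dots=F(p_k)=:y\}
\]
(and, for the $\Gamma$-version, additionally $y\in\Gamma$), with $g$ ranging over a suitable Banach manifold of metrics conformal to $g_0$. By Theorem~\ref{main-theorem} and the fact that the multiple-point set of a simple minimal immersion is closed and nowhere dense, it is enough to work near tuples at which the sheets are embedded and pairwise transverse away from $y$; near such a tuple $\Mm_k$ is a Banach manifold, cut out by the minimal surface equation together with the incidence conditions $F(p_i)=F(p_1)$, exactly as in the pair case of Theorem~\ref{main-theorem} but carried out in disjoint coordinate charts about the $p_i$. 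Let $\Pi\colon\Mm_k\to\{\text{metrics}\}$ be the projection.

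The heart of the argument is an infinitesimal surjectivity lemma: at any point of $\Mm_k$ the admissible conformal metric variations generate, via the induced normal Jacobi fields, enough freedom to move the $1$-jets at $y$ of the $k$ sheets independently, so that ``general position of the resulting tangent planes'' becomes an open dense condition. The key point is that a conformal perturbation $\dot g=2\varphi\,g$ supported in a small ball about a point $z_i\ne y$ of the $i$-th sheet --- taken small enough to avoid every other sheet --- forces a nontrivial inhomogeneous Jacobi field on the $i$-th sheet alone, and by unique continuation for the Jacobi equation this field, in particular its $1$-jet at $y$, can be driven to a prescribed value in a space large enough that general position is generic. Thus the locus in $\Mm_k$ where the branches at $y$ fail to be in general position is a countable union of submanifolds of positive codimension over the space of metrics; here one uses, as in White's bumpy-metrics theorem for minimal submanifolds, that conformal perturbations alone suffice. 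Applying Sard--Smale to $\Pi$ on each such piece, intersecting the resulting residual sets over all $k$, over the countably many diffeomorphism types of closed source manifold, and over a compact exhaustion of $N$ with area bounds (exactly as in the proof of Theorem~\ref{main-theorem}), and finally passing from finitely differentiable to $C^\infty$ metrics, yields a residual set of metrics conformal to $g_0$ for which every simple minimal immersion is both strongly transverse to $\Gamma$ and strongly self-transverse.

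The main obstacle is precisely this infinitesimal surjectivity at a multiple point: since all $k$ sheets pass through $y$, any metric perturbation localized near $y$ moves them in lockstep and cannot decouple them, so one must perturb far from $y$ and transport the effect back via unique continuation; quantifying this --- controlling the $1$-jet at $y$ of the Jacobi field produced by a localized conformal forcing, uniformly enough to conclude that the good condition is open --- is the technical core, and it is also where the restriction to conformal, rather than arbitrary, metric perturbations must be accommodated. A secondary point requiring care is that $\Mm_k$ is a manifold only off the bad locus, so one must restrict attention from the outset, using Theorem~\ref{main-theorem}, to configurations whose sheets are already pairwise embedded and transverse before analyzing higher-order general position.
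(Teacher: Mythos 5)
Your overall architecture matches the paper's: fix $k$, show that conformal perturbations of the metric put the $k$ sheets through a common point into general position, invoke parametric transversality, use bumpiness/second countability to upgrade to a Baire statement, and intersect over $k$. (The paper packages the perturbations as finite-dimensional families $\Ff:\BB^d(0,\eps)\times M\to N$ and feeds the conclusion into the Bumpy Metrics Corollary~\ref{bumpy-corollary}, rather than setting up a universal Banach manifold $\Mm_k$ and applying Sard--Smale directly; these are interchangeable packagings. The paper also formulates ``general position of the sheets'' as transversality of $\widetilde F:\Omega^kM\to N^k$ to $\Delta^k\Gamma$, which is equivalent to your codimension condition by Theorem~\ref{linear-equivalence-theorem}.)

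The genuine gap is the step you yourself flag as the technical core: the claim that a conformal forcing supported in a small ball about $z_i$ on the $i$-th sheet ``forces a nontrivial inhomogeneous Jacobi field on the $i$-th sheet alone, and by unique continuation \dots its $1$-jet at $y$ can be driven to a prescribed value.'' Two things are wrong or missing here. First, the induced field $v$ solves the global elliptic equation $Jv=-G\gamma$ on all of $M$; it is not supported on the $i$-th sheet, so localizing the forcing does not by itself decouple the sheets, and moreover a ball around $z_i$ may meet $F(M)$ in points far from the $p_j$ where the forcing $G\gamma$ is then constrained (this is why the paper supports all perturbations in a single open set $W$ with $F|_{F^{-1}(W)}$ an embedding). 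Second, and more importantly, unique continuation alone does not yield surjectivity of the jet-evaluation map $f\mapsto (f(p_i),Df(p_i))_i$ on the space $V_0=\{f:\, Jf \text{ supported in } U\}$. What is needed is a Runge-type approximation theorem: solutions of $Jf=0$ on a small union of balls around the $p_i$ extend, after arbitrarily small $L^2$ error, to solutions on the complement of a ball in $U$. This is Lax's theorem that unique continuation is \emph{equivalent} to the Runge property; it is a duality argument, not a direct consequence of unique continuation, and it is the content of Theorem~\ref{runge-type-theorem}. Finally, to make the good condition open (so that the failure set is relatively closed and the Baire argument closes), one needs a \emph{single} finite-dimensional space of perturbations that works simultaneously at every coincidence $k$-tuple; the paper obtains this by a compactness argument over the coincidence set $C\subset\Omega^kM$ (Theorems~\ref{ample-compact-k-points-theorem} and~\ref{PDE-theorem}(2)). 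Your proposal acknowledges this uniformity issue but does not supply it. As a side remark, your preliminary reduction ``to configurations whose sheets are already pairwise embedded and transverse'' is unnecessary and slightly off: the submersion/surjectivity property must be established along the entire coincidence locus, including degenerate tuples, since those are precisely the configurations to be perturbed away.
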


Strong transversality and strong self-transversality are defined in Section~\ref{strong-section}.
Theorem~\ref{more-intuitive-theorem} in Section~\ref{equivalence-section} gives a more geometrically intuitive
characterization of those terms.
The terminology is easiest to understand
when $M$ and $\Gamma$ are
hypersurfaces in $N$ (with $M$ immersed and $\Gamma$ embedded).
In that case,
\begin{enumerate}
\item $M$ is strongly self-transverse if for each point $p\in N$, the unit normals
  to the sheets of $M$ passing through $p$ are linearly independent.
\item $M$ is strongly transverse to $\Gamma$ if for each point $p\in \Gamma$,
  the unit normal to $\Gamma$ at $p$ and the unit normals to the sheets of $M$ passing through $p$
  are linearly independent.
\end{enumerate}
See Theorem~\ref{more-intuitive-theorem} in \S\ref{equivalence-section}.

Theorems~\ref{main-theorem} and~\ref{strong-main-theorem}
also hold for constant mean curvature immersions and more generally 
for prescribed mean curvature immersions.
See~\S\ref{prescribed-section}.

The proofs of Theorems~\ref{main-theorem} and~\ref{strong-main-theorem} are based on
the Bumpy Metrics Theorem (see~\S\ref{bumpy-section}) 
together with a very general theorem (Theorem~\ref{PDE-theorem}) 
about linear elliptic partial differential equations.
The flavor of the PDE Theorem is indicated by the following (which is equivalent to a special case
of that theorem):

\begin{theorem}
Let $M$ be a smooth, compact, connected Riemannian manifold with smooth, nonempty boundary.
Let $S$ be a finite subset of the interior of $M$ and let $f:S\to \RR$ be any function.  Then there is a harmonic
function $h$ on $M$ such that $h(x)=f(x)$ for each $x\in S$.
\end{theorem}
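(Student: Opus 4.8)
The plan is to recast the statement as the surjectivity of a linear map and then rule out a linear relation among point evaluations by means of the Green's function of $M$. Let $\mathcal H$ be the space of real-valued functions that are smooth on $\overline M$ and harmonic in the interior; solving the Dirichlet problem with arbitrary smooth boundary data, together with elliptic boundary regularity, shows that the restriction map $\mathcal H\to C^\infty(\partial M)$ is onto. Consider the linear evaluation map $E\colon\mathcal H\to\RR^S$, $E(h)=(h(x))_{x\in S}$. The theorem says exactly that $E$ is surjective, so --- since $\RR^S$ is finite dimensional --- it suffices to show that there is no nonzero tuple $(c_x)_{x\in S}$ of reals with $\sum_{x\in S}c_x\,h(x)=0$ for every $h\in\mathcal H$. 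I would argue by contradiction: suppose such $c_x$ exist, and fix $x_0\in S$ with $c_{x_0}\neq 0$. (I would also take $\dim M\ge 2$, which makes $M\setminus(\partial M\cup S)$ connected; the stated assertion is genuinely false when $\dim M=1$ and $|S|\ge 3$, so this is implicit.)

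Next I would bring in $G(x,z)$, the Dirichlet Green's function of $(M,g)$ for the Laplace--Beltrami operator: for each interior point $x$, $z\mapsto G(x,z)$ is harmonic on $M\setminus(\partial M\cup\{x\})$, vanishes on $\partial M$, has the usual fundamental-solution blow-up as $z\to x$, and $G$ is symmetric in its two arguments. Green's representation formula gives, for every $h\in\mathcal H$ and every interior point $x$,
\[
 h(x)=-\int_{\partial M}\partial_{\nu(z)}G(x,z)\,h(z)\,d\s(z),
\]
with $\nu$ the outward unit normal and $\s$ the induced boundary measure. Now set $w(z):=\sum_{x\in S}c_x\,G(x,z)$; this is harmonic on $M\setminus(\partial M\cup S)$, smooth up to $\partial M$ there, and vanishes on $\partial M$. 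Feeding the representation formula into the assumed relation,
\[
 0=\sum_{x\in S}c_x\,h(x)=-\int_{\partial M}\Big(\sum_{x\in S}c_x\,\partial_{\nu(z)}G(x,z)\Big)h(z)\,d\s(z)=-\int_{\partial M}\frac{\partial w}{\partial\nu}\,h\,d\s
\]
for all $h\in\mathcal H$; since the traces $h|_{\partial M}$ exhaust $C^\infty(\partial M)$, this forces $\partial w/\partial\nu\equiv 0$ on $\partial M$. Hence $w$ has vanishing Cauchy data on all of $\partial M$.

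The last step, and the only genuinely analytic one, is unique continuation up to the boundary. Extending $w$ by zero across $\partial M$ into a slightly enlarged ambient manifold carrying a smooth extension of the metric, the vanishing of $w$ and $\nabla w$ on $\partial M$ makes the extension $C^1$ and a weak --- hence, by elliptic regularity, smooth --- harmonic function on a neighborhood of $\partial M$ that vanishes on a nonempty open set. The unique continuation theorem of Aronszajn then gives $w\equiv 0$ on the connected component of $M\setminus(\partial M\cup S)$ adjacent to $\partial M$, and since that set is connected, $w\equiv 0$ on all of $M\setminus(\partial M\cup S)$. But near $x_0$ one has $w=c_{x_0}G(x_0,\cdot)+(\text{bounded})$, which is unbounded because $G(x_0,\cdot)$ has a pole at $x_0$ --- contradicting $w\equiv 0$. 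Therefore $E$ is surjective, which is the assertion.

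I expect the only real obstacle to be making the boundary unique-continuation step airtight (the Cauchy-data form of Aronszajn's theorem, via the extension-by-zero argument just sketched); by contrast, the reduction to "no linear relation among the evaluations $\delta_x$'' is pure finite-dimensional linear algebra, and the Green's-function identities are classical potential theory. It is worth recording where the hypotheses enter: $\partial M\neq\emptyset$ is what makes the Dirichlet Green's function and Poisson kernel available; connectedness of $M$ (with $\dim M\ge 2$) is what propagates $w\equiv 0$ from $\partial M$ across the punctures at $S$; and smoothness of the metric underlies both the construction of $G$ and the unique continuation theorem.
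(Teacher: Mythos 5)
Your argument is correct (for $\dim M\ge 2$) but it is a genuinely different proof from the one in the paper. The paper deduces this statement from its general Runge-type theorem: by Lax's theorem, unique continuation for the Laplacian implies the Runge approximation property, so harmonic functions defined only on a union of tiny balls around the points of $S$ --- where prescribing values (and even first derivatives) is elementary, by the paper's Lemma on local solvability --- can be approximated by globally defined harmonic functions; since the target $\RR^S$ is finite-dimensional, approximate surjectivity of the evaluation map upgrades to exact surjectivity. You instead dualize: you identify the annihilator of the image of the evaluation map with a combination $w=\sum_x c_x G(x,\cdot)$ of Green's functions, show it has vanishing Cauchy data on $\partial M$ because harmonic traces exhaust $C^\infty(\partial M)$, and then kill it by extension-by-zero plus Weyl's lemma plus Aronszajn, reaching a contradiction with the pole at $x_0$. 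Both proofs ultimately rest on unique continuation, but you use it in the ``vanishing Cauchy data'' form while the paper uses it through Lax's equivalence with the Runge property. Your route is more classical and self-contained for the scalar Laplacian with boundary, at the cost of needing the Dirichlet Green's function, its symmetry, and boundary regularity; the paper's route is what generalizes to the setting it actually needs --- second-order elliptic systems on vector bundles over closed manifolds, with prescription of first derivatives as well as values, and with the constraint that $Jf$ be supported in a prescribed small open set. Your observation that the statement fails for $\dim M=1$ and $|S|\ge 3$ is accurate (harmonic functions on an interval form a two-dimensional space); the paper's version implicitly avoids this because the Runge property requires $D_2\setminus D_1$ connected. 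The one step you flag as delicate --- boundary unique continuation via extension by zero --- is indeed the only nonroutine point, and your sketch of it (the extension is $C^1$ and distributionally harmonic near $\partial M$, hence smooth and harmonic, hence identically zero on the connected set $M^\circ\setminus S$) is sound.
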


The PDE Theorem is a rather direct consequence of a theorem of Peter Lax.

The transversality results of this paper play a key role in Xin Zhou's proof~\cite{zhou-multiplicity} of 
 the Marques-Neves multiplicity-one conjecture.
 Indeed, this paper grew out of a conversation in which Professor Zhou
 explained to me how better knowledge of generic behavior of prescribed
 mean curvature surfaces could be very useful in min-max theory.
 
In~\S\ref{regularity-section}, we apply the results in the preceding sections to show that for a generic smooth
Riemannian metric on a manifold $N$, every $2$-dimensional locally area-minimizing integral
current without boundary has support equal to a smoothly embedded minimal
surface.  The same is true for flat chains mod $2$.  In particular, for generic ambient metrics, $2$-dimensional
varieties that minimize in their homology classes (integral or mod $2$)
are smoothly embedded minimal surfaces, possibly (in the integral case) with
multiplicity.

\section{The Bumpy Metrics Theorem}\label{bumpy-section}

\newcommand{\MMreg}{\MM_\textnormal{reg}}
\newcommand{\MMsing}{\MM_\textnormal{sing}}

Let $N$ be a smooth manifold with a smooth Riemannian metric $g_0$.

Two smooth immersions $F_i:M_i\to N$ of closed manifolds into $N$ are called
{\bf equivalent} if there is a smooth diffeomorphism $u:M_1\to M_2$ such that $F_2=F_1\circ u$.
If $F$ is a smooth immersion into $N$, we let $[F]$ denote its equivalence class.
If $F_i$ and $F$ are smooth immersions, we say that $[F_i]$ converges smoothly to $[F]$
if there are immersions $F_i'\in [F_i]$ such that $F_i'$ converges smoothly to $F$.

Let $\MM$ be the space of all pairs $(\gamma, [F])$ such that $\gamma\in C^\infty(N)$
and $F$ is a smooth, simple, $e^{\gamma}g_0$-minimal immersion of a closed manifold into $N$.

Define a projection $\Pi$ by
\begin{align*}
&\Pi:\MM \to C^\infty(N),  \\
&\Pi(\gamma,[F])=\gamma.
\end{align*}

Let $\MMreg$ be the union of open sets $U\subset \MM$ such that $\Pi$ maps
$U$ homeomorphically onto an open subset of $C^\infty(N)$.
It follows from the implicit function theorem that $(\gamma,[F])\in \MMreg$
if and only if $[F]$ has no nonzero Jacobi fields (for the metric $e^{\gamma}g_0$).

Let $\MMsing= \MM\setminus \MMreg$.

\begin{theorem}[Bumpy Metrics Theorem]\label{bumpy-theorem}
The set $\Pi(\MMsing)$ is a meager subset of $C^\infty(N)$.
\end{theorem}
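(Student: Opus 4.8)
The plan is to realize a Banach-space version of $\MM$ as an infinite-dimensional manifold, to realize $\Pi$ as a Fredholm map of index $0$, to apply the Sard--Smale theorem, and then to upgrade the resulting genericity statement from the Banach setting to $C^\infty(N)$. First I would fix a separable Banach space $\Bb$ of conformal factors that includes $C^\infty(N)$ densely and continuously --- for instance a Sobolev space $W^{k,p}(N)$ with $k$ and $p$ large enough that $W^{k,p}(N)\hookrightarrow C^{k_0}(N)$ for a prescribed large $k_0$ --- and let $\MM_\Bb$ be the space of pairs $(\gamma,[F])$ with $\gamma\in\Bb$ and $F$ a $C^{k,\alpha}$ simple $e^\gamma g_0$-minimal immersion, with projection again denoted $\Pi$.

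Next I would establish the local structure of $\MM_\Bb$. Near a fixed $(\gamma_0,[F_0])$ with $F_0\colon M\to N$, every nearby immersion is, after reparametrization, of the form $v\mapsto\exp_{F_0}(v)$ for a small $C^{k,\alpha}$ section $v$ of the normal bundle of $F_0$, so a neighborhood of $[F_0]$ is modeled on a ball in a Banach space of such sections. Writing $\Gg(\gamma,v)$ for the mean curvature vector of $\exp_{F_0}(v)$ relative to $e^\gamma g_0$, identified via parallel transport with a section of the normal bundle of $F_0$, the space $\MM_\Bb$ is locally the zero set $\{\Gg=0\}$, and $\Pi$ is the restriction of $(\gamma,v)\mapsto\gamma$. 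The partial derivative $D_v\Gg_{(\gamma_0,0)}$ is the Jacobi operator $L$ of $F_0$ for $e^{\gamma_0}g_0$, a self-adjoint elliptic operator on sections of the normal bundle; as $M$ is closed, $L$ is Fredholm with $\operatorname{coker}L\cong\ker L$.

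The heart of the proof is the transversality computation in the $\gamma$-direction. Under a conformal change $g\mapsto e^{t\phi}g$, the mean curvature vector of a fixed immersion changes, to first order at a $g$-minimal immersion, by a nonzero constant times $(\nabla\phi)^\perp$, the component normal to $M$ of the ambient gradient of $\phi$ restricted to $M$; hence
\[
D\Gg_{(\gamma_0,0)}(\phi,v)=Lv+c\,(\nabla\phi)^\perp|_M,\qquad c\neq0.
\]
I would show this map is surjective, equivalently that any $w\in\ker L^{*}=\ker L$ with $\int_M\langle w,\nabla\phi\rangle\,d\Hh^{m}=0$ (here $m=\dim M$) for all $\phi\in\Bb$ must vanish --- the normal projection being invisible because $w$ is normal. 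If $w\not\equiv0$, then $w$, a solution of an elliptic system, is nonzero on a nonempty open set; and because $F_0$ is simple, the set of $q\in M$ for which $F_0$ restricted to some neighborhood $U$ of $q$ is an embedding with $F_0(U)$ disjoint from $F_0(M\setminus U)$ is open and dense (it contains every point $q$ with $F_0^{-1}(F_0(q))=\{q\}$, and by the excerpt those are all but a closed nowhere-dense set). Choosing such a $q$ with $w(q)\neq0$ and a $\phi\in\Bb$ supported in a small ball about $F_0(q)$ whose preimage lies in $U$ and whose gradient is, along the local sheet, nearly parallel to the pushforward of $w$, one gets $\int_M\langle w,\nabla\phi\rangle>0$, a contradiction. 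This is exactly where simplicity enters: where several sheets of $F_0$ meet, $\nabla\phi$ is forced to agree on all of them, so the Jacobi fields cannot be separated. Surjectivity of $D\Gg$ makes $\MM_\Bb$ a smooth Banach manifold near $(\gamma_0,[F_0])$; moreover $D\Pi$ restricted to $T\MM_\Bb=\ker D\Gg$ has kernel $\ker L$ and cokernel isomorphic to $\ker L$, so $\Pi$ is Fredholm of index $0$, and its critical set is precisely $\MMsing$, the locus of data with a nonzero Jacobi field.

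Finally I would run Sard--Smale and pass back to $C^\infty$. Since $\MM_\Bb$ is a separable --- hence Lindel\"of --- $C^r$ Banach manifold for every finite $r$ and $\Pi$ is a $C^r$ Fredholm map of index $0$, the Sard--Smale theorem shows that the critical values of $\Pi$, which contain $\Pi(\MMsing)$, form a meager subset of $\Bb$. Because $C^\infty(N)$ is itself meager in $\Bb$, one cannot merely intersect; instead I would write $\Pi(\MMsing)=\bigcup_p S_p$, where $S_p\subset C^\infty(N)$ is the set of $\gamma$ carrying a simple $e^\gamma g_0$-minimal immersion that has a nonzero Jacobi field and area $\le p$, and show that each $S_p$ is closed and nowhere dense in $C^\infty(N)$. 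Closedness uses a compactness theorem for simple minimal immersions of bounded area (for ambient metrics converging in $C^\infty$): simplicity and the existence of a nonzero Jacobi field --- the latter after $L^2$-normalization and elliptic estimates --- persist in the limit. Empty interior follows from the Banach result: given $\gamma_0\in C^\infty(N)$ and $k_0$, take $\Bb=W^{k,p}\hookrightarrow C^{k_0}$; Sard--Smale produces a bumpy metric $\Bb$-close, hence $C^{k_0}$-close, to $\gamma_0$, and a mollification of it is a $C^\infty$ metric still $C^{k_0}$-close to $\gamma_0$ and, by the same compactness theorem, still bumpy for immersions of area $\le p$, hence outside $S_p$. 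Thus $\Pi(\MMsing)=\bigcup_p S_p$ is meager in $C^\infty(N)$. I expect the main obstacle to be the transversality step --- reconciling the rigidity of conformal variations, which produce only normal fields of the form $(\nabla\phi)^\perp|_M$, with the need to detect arbitrary Jacobi fields, which succeeds only because simplicity makes almost every point of $M$ a single embedded sheet --- with the compactness theorem underlying the $C^\infty$ reduction (ensuring simplicity, the area bound, and the Jacobi field all survive passage to a limit) a close second.
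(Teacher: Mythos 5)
The paper does not actually prove this theorem --- it cites \cite{white-bumpy} --- so the relevant comparison is with that reference (whose architecture is also displayed in \S\ref{prescribed-section} for the prescribed-mean-curvature analogue: Banach manifold structure, index-zero Fredholm projection, Sard--Smale, then a separate passage to $C^\infty$). Your overall plan is the same as that proof, and your transversality computation is essentially the correct one: the cokernel of the Jacobi operator is killed by conformal variations precisely because simplicity lets you localize $\phi$ near a single embedded sheet, which matches the role of \eqref{G-formula} and of simplicity in the paper.

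The genuine gap is in the $C^\infty$ upgrade. Your pieces $S_p$, defined by an area bound alone, are not closed, and the ``compactness theorem for simple minimal immersions of bounded area'' that you invoke does not exist: a bound on area gives no control on the second fundamental form or on the topology of the domain, so a sequence $F_i$ of simple $e^{\gamma_i}g_0$-minimal immersions with $\operatorname{area}\le p$ and $\gamma_i\to\gamma$ need not subconverge smoothly to anything better than a stationary integral varifold (think of surfaces of unbounded genus, or of necks pinching). Moreover, even when smooth subconvergence does hold, the limit of simple immersions need not be simple (embeddings of $S^1$ can converge smoothly to a doubly traversed circle), so the witnessing immersion for $\gamma$ may fall outside the class defining $S_p$. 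Both halves of your final step --- closedness of $S_p$ and the claim that a mollified Sard--Smale regular value stays outside $S_p$ --- lean on this nonexistent compactness. This is exactly the difficulty that \cite{white-bumpy} was written to resolve: the decomposition there is finer, fixing the diffeomorphism type of the domain (there are only countably many) and imposing quantitative bounds on the immersion (e.g.\ on its $C^{2,\alpha}$ norm, not just its area) so that Arzel\`a--Ascoli plus Schauder estimates give genuine compactness, with the failure of simplicity in the limit handled separately. Without importing that finer decomposition, your proof of the $C^\infty$ statement does not close.
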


For proof, see~\cite{white-bumpy}.

\begin{corollary}[Bumpy Metrics Corollary]\label{bumpy-corollary}
Suppose that $\Kk$ is a closed subset of $\MM$, 
or, more generally, a relatively closed subset of an open subset $\UU$
of $\MM$.
 Suppose also that every nonempty open subset of $\MMreg$
contains contains a point of $\MM\setminus \Kk$.
Then $\Pi(\Kk)$ is meager in $C^\infty(N)$.
\end{corollary}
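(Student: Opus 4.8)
The plan is to split $\Kk$ using the decomposition $\MM=\MMreg\cup\MMsing$ and treat the two pieces separately. Write $\Kk=(\Kk\cap\MMsing)\cup(\Kk\cap\MMreg)$. The singular part is immediate: $\Pi(\Kk\cap\MMsing)\subseteq\Pi(\MMsing)$, and the latter is meager by the Bumpy Metrics Theorem (Theorem~\ref{bumpy-theorem}), so $\Pi(\Kk\cap\MMsing)$ is meager, being a subset of a meager set. Everything therefore reduces to showing that $\Pi(\Kk\cap\MMreg)$ is meager.

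For the regular part I would exploit the fact that, by definition, $\MMreg$ is open in $\MM$ and is a union of open sets $U$ on each of which $\Pi$ restricts to a homeomorphism onto an open subset of $C^\infty(N)$. Since $\MM$ is separable and metrizable, so is its subspace $\Kk\cap\MMreg$, which is therefore Lindel\"of; hence one can choose countably many such distinguished open sets $U_1,U_2,\dots$ whose union contains $\Kk\cap\MMreg$. Set $V_i=U_i\cap\UU$. This is again open in $\MM$, and $\Pi$ maps $V_i$ homeomorphically onto an open subset $W_i:=\Pi(V_i)$ of $C^\infty(N)$. Since $\Kk\subseteq\UU$, we have $\Kk\cap\MMreg=\bigcup_i(\Kk\cap V_i)$, so it suffices to prove that each $\Pi(\Kk\cap V_i)$ is nowhere dense in $C^\infty(N)$.

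Fix $i$. The set $\Kk\cap V_i$ is relatively closed in $V_i$, because $\Kk$ is relatively closed in $\UU$ and $V_i$ is an open subset of $\UU$. It also has empty interior in $V_i$: a nonempty open subset of $V_i$ contained in $\Kk$ would be a nonempty open subset of $\MMreg$ entirely contained in $\Kk$, contradicting the hypothesis that every such subset meets $\MM\setminus\Kk$. Thus $\Kk\cap V_i$ is nowhere dense in $V_i$; transporting through the homeomorphism $\Pi|_{V_i}$ shows that $\Pi(\Kk\cap V_i)$ is a relatively closed, nowhere dense subset of the open set $W_i\subseteq C^\infty(N)$, and a relatively closed nowhere dense subset of an open set is nowhere dense in the whole space. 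Hence $\Pi(\Kk\cap\MMreg)=\bigcup_i\Pi(\Kk\cap V_i)$ is meager, and combined with the singular part this gives that $\Pi(\Kk)$ is meager.

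The step that genuinely requires care — and where I would want to rely on the foundational setup in~\cite{white-bumpy} — is the countability claim used above: covering $\Kk\cap\MMreg$ by \emph{countably} many of the distinguished open sets needs $\MM$ to be second countable (equivalently, separable metrizable), not merely that $\Pi$ is locally a homeomorphism onto $C^\infty(N)$; a local homeomorphism onto a Lindel\"of space can fail to have a Lindel\"of total space. Once that is granted, the rest is routine point-set topology.
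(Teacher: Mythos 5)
Your argument is correct and is essentially the paper's own proof: split off $\Kk\cap\MMsing$ via the Bumpy Metrics Theorem, use second countability of $\MM$ (which the paper establishes in Remark~\ref{second-countable}) to cover the regular part by countably many distinguished open sets on which $\Pi$ is a homeomorphism, and observe that the hypothesis forces $\Kk$ to be relatively closed and nowhere dense in each, hence with nowhere dense image. Your closing caveat about needing second countability rather than mere local homeomorphy onto a Lindel\"of space is exactly the point the paper addresses in that remark.
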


\begin{proof}
  Because $\MM$ is second countable (see Remark~\ref{second-countable}),
   $\MMreg\cap\UU$ is a countable union of open sets $\UU_i$
such that $\Pi$ maps $\UU_i$ homeomorphically onto a open subset $\Pi(\UU_i)$ of $C^\infty(N)$.
Now
\begin{equation}\label{covered}
  \Pi(\Kk) \subset \Pi(\MMsing) \cup \left(\cup_i \Pi(\Kk\cap \UU_i)\right).
\end{equation}
Since $\Kk\cap \UU_i$ is a relatively closed subset of $\UU_i$,
 the hypothesis of the Corollary implies that $\Kk\cap \UU_i$
is nowhere dense in $\UU_i$.  Hence $\Pi(\Kk\cap \UU_i)$ is nowhere dense in $\Pi(U_i)$ and therefore is nowhere
dense in $C^\infty(N)$. Thus by~\eqref{covered} and the Bumpy Metrics Theorem,
 $\Pi(\Kk)$ is meager in $C^\infty(N)$.
\end{proof}

\begin{remark}\label{second-countable}
Second countability of $\MM$ may be proved as follows.
Up to smooth diffeomorphism, there are only countably many smooth, closed $m$-manifolds.
Let $M_1, M_2,\dots$ be an enumeration of them.  Give each $M_i$ a smooth Riemannian metric.
For each $i$, let $\Cc_i$ be a countable dense subset of $C^\infty(N)\times C^\infty(M_i,N)$.  
For $(\gamma,F)\in \Cc_i$, $j\in\NN$, and $k\in \NN$, let $U_{(\gamma',F'),j,k}$ be the set 
of $(\gamma,[F])$ in $\MM$ such that:
\begin{gather*}
\textnormal{domain}(F')=M_i, \\
\| F' - F \|_{C^j} < \frac1k, \\
\|\gamma' - \gamma\|_{C^j} < \frac1k,
\end{gather*}
(where the $C^j$ norms are with respect to the background metric $g_0$ on $N$.)
Then the $U_{(\gamma,F'),j,k}$ form a countable basis for topology of $\MM$.
(To make sense of $F'-F$, we isometrically embed $(N,g_0)$ in some Euclidean space.)
\end{remark}

\section{The Mean Curvature Operator}\label{mean-curvature-operator-section}

Let $N$ be a smooth $n$-dimensional manifold with a smooth Riemannian metric $g$.  
Let $M$ be a smooth, closed manifold,
and $F: M\to N$ be a smooth, $g$-minimal immersion.  Let $\Vv_F$ be the space of
all smooth normal vectorfields to $F$.  Thus $f\in \Vv_F$ if and only if $f$ 
is a smooth function that assigns to each $x\in M$ a vector $f(x)$ in $\Tan_{F(x)}N$
that is perpendicular to the image of $DF(x)$.

If $u\in \Vv_F$ is sufficiently small (in $C^1$ norm), then 
\begin{equation}\label{the-immersion}
   x\in M\mapsto F(x) + u(x)
\end{equation}
is also an immersion. 

(The expression~\eqref{the-immersion} makes sense if $N$ is $\RR^n$.
In a general ambient manifold $N$, the right hand side of~\eqref{the-immersion} 
should be replaced by the image of $u(x)$ under the exponential map.)

 If $\gamma\in C^\infty(N)$, the immersion~\eqref{the-immersion}
is minimal with respect to the Riemannian metric $e^\gamma g$
if and only if $u$ satisfies the relevant Euler-Lagrange system:
\[
    H(\gamma,u) = 0.
\]
Here $H(\gamma,\cdot):\Vv_F\to \Vv_F$ is a second-order quasilinear elliptic operator.

Let $G=D_1H(0,0)$ and $J=D_2H(0,0)$.
Then $J$ is the Jacobi operator, a second-order, self-adjoint, linear elliptic operator; it is the 
sum of the Laplace operator and a zero-order operator.
The Jacobi operator
 reflects how the mean curvature changes (to first order) as we move the surface while keeping the metric fixed.

The operator $G:C^\infty(N)\to \Vv_F$ is a linear differential operator that reflects how
 the mean curvature changes (to first order) as we vary the metric while keeping the surface fixed.
In fact, one easily calculates that 
\begin{equation}\label{G-formula}
    G(\gamma)(p) =  -\frac{m}2 ((\nabla u)(F(p)))^\perp,
\end{equation}
where $m=\dim(M)$ and the $\perp$ indicates the projection onto the orthogonal complement of $\Tan(F,p)$.

In general, the map $G:\Vv\to C^\infty(N)$ need not be surjective.
For example, if $p_1$ and $p_2$ are distinct points in $M$ with
$F(p_1)=F(p_2)$ and $\Tan(F,p_1)=\Tan(F,p_2)$, then for
any $\gamma$,  we have $G(\gamma)(p_1)=G(\gamma)(p_2)$ by~\eqref{G-formula}.

On the other hand, if $F$ is an embedding, then $G$ is surjective by~\eqref{G-formula}.

\section{Notation}\label{notation-section}

In the remainder of the paper, except where otherwise stated, $M$ and $N$
are smooth manifolds with $\dim(M)<\dim(N)$, $g$ is a smooth Riemannian metric on $N$,
and $F:M\to N$ is simple, smooth, $g$-minimal immersion.

We let $W$ be an open subset of $N$ such that $U:=F^{-1}(W)$ contains a point from each component
of $N$ and such that $F|_U$ is an embedding.  (Such a $W$ exists since $F$ is simple.)
If $M$ is connected, one can choose $W$ to be a small neighborhood of a point $p\in N$ such that
exactly one sheet of $F(M)$ passes through $p$.

As in \S\ref{mean-curvature-operator-section}, we let $\Vv=\Vv_F$ be the space
of smooth normal vectorfields on $F$.
We let $V_0$ be the set of $f\in \Vv$ such that $Jf$ is supported in $U$.
It may be helpful to think of $f\in V_0$ as ``almost" a Jacobi field: $Jf=0$ outside of the very small set $U$.

\section{families of immersions}\label{families-section}

\begin{proposition}\label{G-image}
If $f\in V_0$, then there is a $\gamma\in C^\infty(N)$ such that $G\gamma=f$.
\end{proposition}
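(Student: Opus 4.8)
The plan is to split $f$ into a piece supported on the part of $M$ where $F$ is embedded, which is handled at once by formula~\eqref{G-formula}, and a remaining piece which off $U$ is a genuine Jacobi field and which is to be absorbed by a Cauchy‑type construction of $\gamma$ along $F(M)$.

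First I would record the mechanism behind~\eqref{G-formula}. Suppose $h\in\Vv$ is supported in an open set $O\subset M$ such that $F|_O$ is an embedding and $F^{-1}(W_O)=O$ for some open $W_O\subset N$ containing $F(O)$. Choosing a tubular neighborhood of $F(O)$ in $W_O$, with projection $\pi$ and normal‑displacement field $\xi$, set $\gamma(x)=-\tfrac2m\langle h\bigl((F|_O)^{-1}\pi x\bigr),\xi(x)\rangle\,\chi(x)$, where $\chi$ is a cutoff equal to $1$ near $F(O)$ and supported in $W_O$. Then $\gamma$ vanishes on $F(O)$, so $\nabla\gamma$ is purely normal there and equals $-\tfrac2m h$, while $\nabla\gamma\circ F$ vanishes off $O$ because $F^{-1}(W_O)=O$; hence $G\gamma=h$ by~\eqref{G-formula}. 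Every point of $U$ has a neighborhood of this kind, and so does every point of the open dense set $M':=\{p\in M: F(p)\notin F(M\setminus\{p\})\}$ (its complement being the closed, nowhere dense set of self‑intersection points of $F$, by the unique‑continuation property of simple minimal immersions noted in the Introduction). Covering the compact support of an $h\in\Vv$ with $\spt h\subset M'$ by finitely many such neighborhoods and patching with a partition of unity, I would conclude that every $h\in\Vv$ supported in $M'$ is in the image of $G$; in particular, so is every $h$ supported in $U$.

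It therefore suffices to produce $\gamma\in C^\infty(N)$ with $f-G\gamma$ supported in $M'$, i.e.\ with $G\gamma=f$ on some neighborhood of $\Sigma:=M\setminus M'$, for then $f=G\gamma+(f-G\gamma)$ with the second term of the type handled above. Since $U\subset M'$ and $f\in V_0$, the field $f$ is a Jacobi field on a neighborhood of $\Sigma$. By~\eqref{G-formula}, requiring $G\gamma=f$ near $\Sigma$ is the same as prescribing, on each local sheet of $F(M)$ through a point of $F(\Sigma)$, that $\gamma$ vanish and that its normal derivative be $-\tfrac2m f$ (the tangential derivative of $\gamma$ along the sheet being automatically that of $\gamma\circ F$), and then realizing all these $1$‑jets by one smooth function on $N$ and gluing with a partition of unity. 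Away from the mutual intersections of the sheets this is the construction of the previous paragraph; at a point $q$ where sheets with normal spaces $\nu_1,\dots,\nu_k\subset\Tan_qN$ meet, the prescribed normal derivatives are compatible with a single $\gamma$ exactly when they lie in the range of $v\mapsto(\pi_{\nu_1}v,\dots,\pi_{\nu_k}v)$, which is onto $\nu_1\times\cdots\times\nu_k$ when the sheets meet transversally (its kernel is the intersection of their tangent planes, and a dimension count finishes the argument). I expect the genuine difficulty to be the construction near $\Sigma$ when $F$ has non‑transverse self‑intersections there; that is presumably the place where the hypothesis $f\in V_0$—equivalently, that $f$ is a Jacobi field off the small set $U$—must be used in an essential way.
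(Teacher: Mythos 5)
Your first paragraph, specialized to $O=U$, is precisely the content of the paper's one-line proof: a normal field supported in $U=F^{-1}(W)$, where $F$ restricts to an embedding and $U$ is the \emph{full} preimage of $W$, is realized as $G\gamma$ with $\gamma$ supported in $W$ by the tubular-neighborhood construction and \eqref{G-formula}. The trouble is everything after that. You have read the proposition as asserting that $f$ itself lies in the image of $G$; what the paper actually needs and uses (see the proof of Theorem~\ref{families-theorem}, where the proposition is invoked only to produce $\gamma_i$ supported in $W$ with $G\gamma_i=-Jf_i$) is that $Jf$ --- which \emph{is} supported in $U$ --- lies in the image of $G$. The statement as printed is a misprint, and the literal claim you set out to prove is false. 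For a counterexample, take two closed geodesics in a flat $3$-torus crossing transversally at a single point $q=F(p_1)=F(p_2)$: by the Runge-Type Theorem~\ref{runge-type-theorem} there are fields in $V_0$ with arbitrary values $(v_1,v_2)$ at $(p_1,p_2)$, whereas any field of the form $G\gamma$ must have $(f(p_1),f(p_2))$ in the image of the map $w\in\Tan_qN\mapsto (w^{\perp_1},w^{\perp_2})$, which has rank at most $3$ and hence cannot fill the $4$-dimensional space $\nu_1\times\nu_2$.

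The same point is where your sketch breaks down internally. Surjectivity of $v\mapsto(\pi_{\nu_1}v,\dots,\pi_{\nu_k}v)$ at a $k$-fold self-intersection is not a consequence of transversality: by Theorem~\ref{linear-equivalence-theorem} it is equivalent to \emph{strong} transversality of the tangent planes (i.e., $\dim\bigl((\cap_iV_i)^\perp\bigr)=\sum_i\dim V_i^\perp$), which fails already for two transverse geodesics in a $3$-manifold and which, for $k\ge 3$ sheets, is strictly stronger than pairwise transversality. Worse, no transversality of the self-intersections of $F$ can be assumed here --- generic self-transversality is the \emph{conclusion} of the paper, and the proposition must apply to an arbitrary simple minimal immersion. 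Finally, the hypothesis that $Jf$ is supported in $U$ cannot rescue the construction, since (again by Theorem~\ref{runge-type-theorem}) it places no constraint on the values of $f$ at the finitely many points over a self-intersection. The fix is not more analysis near $\Sigma$ but a correction of the target: prove the statement that is actually used, namely that every normal field supported in $U$ (in particular $Jf$ for $f\in V_0$) equals $G\gamma$ for some $\gamma$ supported in $W$ --- which your first paragraph already establishes.
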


\begin{proof}
This follows immediately from~\eqref{G-formula}.
\end{proof}

\begin{theorem}\label{families-theorem}
Suppose that $F:M\to N$ is a simple, smooth, $g$-minimal immersion with no nontrivial
Jacobi fields (i.e., no nonzero solutions $v\in \Vv_F$ of $Jv=0$.)
Let $U$, $W$ and $V_0$ be as in \S\ref{notation-section}.
Let $f_1,\dots,f_k$ be vectorfields in $V_0$.

There exist $\eps>0$ and smooth maps
\begin{align*}
   &\gamma: \BB^k(0,\eps)\to C^\infty(N), \\
   &\Ff: \BB^k(0,\eps)\times M\to N
\end{align*}
with the following properties:
\begin{enumerate}[\upshape (1)]
\item\label{initial-gamma-item} $\gamma(0,\cdot)=0$.
\item\label{initial-F-item} $\Ff(0,\cdot)=F(\cdot)$.
\item\label{minimal-item} For each $\tau\in \BB^k(0,\eps)$, the map $\Ff(\tau,\cdot):M \to N$ is an immersion
 that is minimal with respect to the metric $e^{\gamma(\tau)}g$.
\item\label{support-item} For each $\tau$, $\gamma(\tau)$ is supported in $W$.
\item\label{velocity-item} For each $i$,
\[
   (d/dt)_{t=0} \Ff(t\ee_i, \cdot) = f_i(\cdot),
\]
or, in other notation,
\[
   D_1\Ff(0,\cdot)\ee_i = f_i(\cdot).
\]
\end{enumerate}
\end{theorem}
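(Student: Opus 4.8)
The plan is to solve the minimal surface equation for normal perturbations of $F$ by the implicit function theorem, using the conformal factor as the parameter that is tuned to prescribe the initial velocities.

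Recall from \S\ref{mean-curvature-operator-section} that $x\mapsto\exp_{F(x)}(u(x))$ is minimal with respect to $e^\gamma g$ exactly when $H(\gamma,u)=0$, where $H(0,0)=0$, $D_1H(0,0)=G$, and $D_2H(0,0)=J$. For each $i$, since $J$ is a differential operator we have $\spt(J(Jf_i))\subseteq\spt(Jf_i)\subseteq U$, so $-Jf_i\in V_0$; hence by Proposition~\ref{G-image} there is $\gamma_i\in C^\infty(N)$ with $G\gamma_i=-Jf_i$, and since $-Jf_i$ is supported in $U$, over which $F$ restricts to an embedding into $W$, the construction underlying \eqref{G-formula} lets us take $\gamma_i$ supported in $W$. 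Put $\gamma(\tau):=\sum_i\tau_i\gamma_i$ for $\tau\in\RR^k$ and $\widehat H(\tau,u):=H(\gamma(\tau),u)$; passing to Hölder spaces, $\widehat H$ is a smooth map from a neighborhood of $(0,0)$ in $\RR^k\times C^{2,\alpha}$ (the $C^{2,\alpha}$ normal vectorfields along $F$) into $C^{0,\alpha}$ (the same). Because $M$ is closed and $F$ has no nontrivial Jacobi fields, the self-adjoint elliptic operator $J=D_2\widehat H(0,0)$ has trivial kernel, hence trivial cokernel, hence is an isomorphism $C^{2,\alpha}\to C^{0,\alpha}$.

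By the implicit function theorem there are $\eps>0$ and a smooth map $\tau\mapsto u(\tau)\in C^{2,\alpha}$ on $\BB^k(0,\eps)$ with $u(0)=0$ and $\widehat H(\tau,u(\tau))=0$. Differentiating this identity at $\tau=0$ (using the chain rule and $D_1\widehat H(0,0)\ee_i=G\gamma_i$) gives $J\,(\partial_{\tau_i}u)(0)=-G\gamma_i=Jf_i$, hence $(\partial_{\tau_i}u)(0)=f_i$ by injectivity of $J$. A Schauder bootstrap, using that the coefficients of the equation depend smoothly on $x$ and on $\tau$ (and that $\gamma(\tau)$ is smooth), upgrades each $u(\tau)$ to a smooth normal vectorfield depending smoothly on $\tau$; consequently $\gamma(\tau)=\sum_i\tau_i\gamma_i$ and $\Ff(\tau,x):=\exp_{F(x)}(u(\tau)(x))$ are smooth. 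Now \eqref{initial-gamma-item} and \eqref{support-item} are immediate, and $\Ff(0,\cdot)=\exp_{F(\cdot)}(0)=F$ gives \eqref{initial-F-item}. After shrinking $\eps$, $u(\tau)$ is $C^1$-small, so $\Ff(\tau,\cdot)$ is an immersion, and it is $e^{\gamma(\tau)}g$-minimal because $H(\gamma(\tau),u(\tau))=0$; this is \eqref{minimal-item}. Finally, since $(d\exp_p)_0$ is the identity, $(d/dt)_{t=0}\Ff(t\ee_i,\cdot)=(\partial_{\tau_i}u)(0)=f_i$, which is \eqref{velocity-item}.

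The only inputs that use the structure of the problem are the invertibility of $J$ (the no-Jacobi-fields hypothesis) and the solvability of $G\gamma_i=-Jf_i$ by a $\gamma_i\in C^\infty(N)$ supported in $W$, which is precisely why membership in $V_0$ is built into the hypotheses; everything else is a direct application of the implicit function theorem. I expect the only real work to be the bookkeeping of the Fréchet/$C^\infty$ statements — making precise ``smooth map $\BB^k(0,\eps)\to C^\infty(N)$'' and the joint smoothness of $\Ff$ in $(\tau,x)$ via the Schauder bootstrap — rather than anything conceptual.
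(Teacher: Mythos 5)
Your proof is correct and follows essentially the same route as the paper: choose $\gamma_i$ supported in $W$ with $G\gamma_i=-Jf_i$ via Proposition~\ref{G-image} and \eqref{G-formula}, set $\gamma(\tau)=\sum_i\tau_i\gamma_i$, apply the implicit function theorem (using that the no-Jacobi-fields hypothesis makes $J$ invertible), and differentiate $H(\gamma(t\ee_i),\cdot)=0$ at $t=0$ to identify the velocities. The only difference is that you spell out the Hölder-space setup and Schauder bootstrap that the paper leaves implicit.
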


\begin{remark}\label{families-remark}
Theorem~\ref{families-theorem} can be restated as follows.
Given a $k$-dimensional linear subspace $V$ of $V_0$, there exist smooth
maps $\gamma$ and $\Ff$ such that~\eqref{initial-gamma-item} -- \eqref{support-item} hold,
 and such that $v\mapsto D_1\Ff(0,\cdot)v$ is a surjective linear map from $\RR^d$
onto $V$.
\end{remark}

\begin{proof}
By Proposition~\ref{G-image}, for each $i$, we can find a $\gamma_i\in C^\infty(N)$ supported in $W$
such that $G\gamma_i=-Jv_i$.
Define $\gamma: \RR^k\times N\to \RR$ by
\[
   \gamma(\tau,\cdot) = \sum_{i=1}^k \tau_i \gamma_i(\cdot).
\]
By the implicit function theorem, there is an $\eps>0$ and a smooth map
\[
  \Ff:\BB^k(0,\eps)\times M \to N
\]
such that~\eqref{initial-gamma-item}--\eqref{minimal-item} hold. Note also that~\eqref{support-item}
holds by our choice of the $\gamma_i$.  Thus it remains only to show~\eqref{velocity-item}.

Since $\Ff(t\ee_i, \cdot)$ is $e^{t\gamma_i}g$-minimal,
\[
   0 = H(t\gamma_i, \Ff(t\ee_i, \cdot)).
\]
Taking the derivative at $t=0$ gives
\begin{align*}
0
&=  D_1H(0,0)\gamma_i + D_2H(0,0) (d/dt)_{t=0} \Ff(t\ee_i,\cdot) \\
&= G \gamma_i + J (d/dt)_{t=0} \Ff(t\ee_i,\cdot) \\
&= - Jv_i + J (d/dt)_{t=0}\Ff(t\ee_i, \cdot).
\end{align*}
Since $J$ has no nontrivial kernel, 
\[
  v_i = (d/dt)_{t=0}\Ff(t\ee_i,\cdot).
\]
\end{proof}

\section{First Transversality Theorem}

\begin{theorem}[Submersion Theorem]\label{submersion-theorem}
Suppose that $F:M\to N$ is a smooth, simple, $g$-minimal immersion with no nontrivial
Jacobi fields (i.e., no nonzero solutions $v\in \Vv_F$ of $Jv=0$.)

Then for some finite $k$ and some $\eps>0$, there is a smooth function $\gamma:\BB^k(0,\eps)\to C^\infty(N)$
and a smooth map $\Ff: \BB^k(0,\eps)\times M\to N$ such that
\begin{enumerate}
\item\label{initial-assertion} $\gamma(0)=0$ and $\Ff(0,\cdot)=F$.
\item\label{minimal-assertion} For each $\tau\in \BB^k(0,\eps)$, $\Ff(\tau,\cdot):M\to N$ is an smooth 
immersion that is minimal with respect to the metric $e^{\gamma(t)}g$.
\item\label{submersion-assertion} $\Ff$ is a submersion.
\end{enumerate}
\end{theorem}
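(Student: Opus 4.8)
The plan is to reduce the submersion statement to a single pointwise fact about the space $V_0$ and then invoke Theorem~\ref{families-theorem}. Write $\Nn_x:=\Tan(F,x)^{\perp}\subset\Tan_{F(x)}N$ for the normal space; since $\dim M<\dim N$ it is nonzero, and since $F$ is an immersion it is complementary to $DF(x)(\Tan_xM)$. First I would establish that one can choose vectorfields $f_1,\dots,f_k\in V_0$ whose values $f_1(x),\dots,f_k(x)$ span $\Nn_x$ for \emph{every} $x\in M$. Feeding these to Theorem~\ref{families-theorem} produces $\eps_0>0$ and smooth maps $\gamma\colon\BB^k(0,\eps_0)\to C^\infty(N)$, $\Ff\colon\BB^k(0,\eps_0)\times M\to N$ with $\gamma(0)=0$, $\Ff(0,\cdot)=F$, each $\Ff(\tau,\cdot)$ an $e^{\gamma(\tau)}g$-minimal immersion, and $D_1\Ff(0,x)\ee_i=f_i(x)$. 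At $(0,x)$ the image of $D\Ff(0,x)\colon\RR^k\times\Tan_xM\to\Tan_{F(x)}N$ then contains $DF(x)(\Tan_xM)$ together with $f_1(x),\dots,f_k(x)$, hence is all of $\Tan_{F(x)}N$; so $\Ff$ is a submersion along the compact set $\{0\}\times M$, and since surjectivity of a linear map is an open condition one can take a smaller $\eps\in(0,\eps_0]$ for which $\Ff$ is a submersion on all of $\BB^k(0,\eps)\times M$. That gives \eqref{initial-assertion}--\eqref{submersion-assertion}. The reduction to a pointwise statement would then be completed by a routine compactness argument: for each $x_0$ pick $\dim N-\dim M$ elements of $V_0$ whose values at $x_0$ form a basis of $\Nn_{x_0}$, note their values still span $\Nn_x$ for $x$ near $x_0$, and pass to a finite subcover.

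So the real content is the claim that, for each $x_0\in M$, the evaluation map $V_0\to\Nn_{x_0}$, $f\mapsto f(x_0)$, is onto — and this is the step I expect to be the main obstacle. I would argue it as follows. Since $F$ has no nontrivial Jacobi fields, $J$ has trivial kernel; being self-adjoint and elliptic on the closed manifold $M$, it is then invertible on smooth normal vectorfields, so $V_0=J^{-1}(\Vv_U)$ with $\Vv_U:=\{h\in\Vv:\spt h\subset U\}$. If the evaluation at some $x_0$ were not onto, there would be a nonzero $\xi\in\Nn_{x_0}$ with $\langle\xi,f(x_0)\rangle=0$ for all $f\in V_0$. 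Let $w$ be the distributional solution of $Jw=\xi\,\delta_{x_0}$ on $M$ (unique since $J$ is invertible; smooth away from $x_0$ and in $L^1(M)$ by elliptic regularity). Self-adjointness of $J$ gives $\langle\xi,\phi(x_0)\rangle=\int_M\langle w,J\phi\rangle$ for every smooth normal vectorfield $\phi$; applying this with $\phi\in V_0$, and using that $J$ maps $V_0=J^{-1}(\Vv_U)$ onto $\Vv_U$, one gets $\int_M\langle w,h\rangle=0$ for every $h\in\Vv_U$, i.e.\ $w\equiv0$ on $U$. Now $Jw=0$ on $M\setminus\{x_0\}$, and $U$ meets every connected component of $M$ (by the choice of $W$ in \S\ref{notation-section}, using that $F$ is simple), so $U\setminus\{x_0\}$ meets every component of $M\setminus\{x_0\}$; unique continuation for $J$ (whose principal part is the Laplacian) then forces $w\equiv0$ on $M\setminus\{x_0\}$, hence $w=0$ as a distribution on $M$. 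But then $\xi\,\delta_{x_0}=Jw=0$, a contradiction.

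In short, everything hinges on $V_0$ being ``large'' in the sense that its members can be prescribed arbitrarily at one point, however far that point lies from the tiny set $U$; the mechanism is the interplay of the invertibility of $J$, the representation $V_0=J^{-1}(\Vv_U)$, and unique continuation. By contrast, the passage from pointwise surjectivity of $D\Ff(0,\cdot)$ to a genuine submersion (openness plus compactness) and the finite-cover construction of the $f_i$ are routine, and I would not expect difficulty there.
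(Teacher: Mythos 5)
Your proposal is correct, and at the level of the Submersion Theorem itself it follows the same architecture as the paper: produce finitely many elements of $V_0$ whose values span $\Tan(F,x)^\perp$ at every $x\in M$, feed them into Theorem~\ref{families-theorem}, get surjectivity of $D\Ff(0,x)$ by splitting a tangent vector of $N$ into its normal part (hit by $D_1\Ff(0,x)$) and its tangential part (hit by $D_2\Ff(0,x)=DF(x)$), and then shrink $\eps$ by openness and compactness. The finite-subcover reduction to pointwise surjectivity is also exactly the paper's Theorem~\ref{ample-compact-point-theorem}.

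Where you genuinely diverge is in the key PDE input. The paper simply cites Theorem~\ref{PDE-theorem}, whose proof (via the Runge-Type Theorem~\ref{runge-type-theorem}) uses Lax's theorem that unique continuation implies the Runge approximation property: one solves $Jf=0$ with prescribed data on small balls around the points of $S$, approximates by solutions on $M\setminus \BB(q,r/4)$ with $q\in U$, and cuts off near $q$. You instead prove the needed special case (prescribing the value, at a single point, of an element of $V_0$) directly by duality: using invertibility of $J$ to write $V_0=J^{-1}(\Vv_U)$, testing a putative annihilating covector $\xi$ against the Green's function $w$ with $Jw=\xi\,\delta_{x_0}$, concluding $w\equiv 0$ on $U$, and then killing $w$ on all of $M\setminus\{x_0\}$ by unique continuation (here the hypothesis that $U$ meets every component of $M$, i.e.\ simplicity of $F$, is used exactly as in the paper). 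This is essentially an inlining of the proof of Lax's theorem, specialized to what is needed; it is correct, and it buys self-containedness at the cost of invoking existence and local integrability of the Green's function of $J$ (which you assert via ``elliptic regularity'' --- standard, but worth a word, since $w=0$ a.e.\ together with $w\in L^1$ is what lets you conclude $w=0$ as a distribution and hence $\xi\,\delta_{x_0}=Jw=0$). Note also that your version only prescribes values, not first derivatives, which is all the Submersion Theorem needs, whereas the paper's Runge-Type Theorem is stronger and is reused later (e.g.\ in \S\ref{regularity-section}) where derivative prescription is essential.
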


\begin{proof}
Since $F:M\to N$ is simple, there is an open subset $W$ of $N$ such that 
 that $F$ is an embedding of $U:=F^{-1}(W)$ into $W$
 and such that $U$ has points in each connected component of $M$  

Let 
\[
    V_0 = \{ f\in \Vv_F: \text{$Jf$ is supported in $U$}\}.
\]

By a very general fact about solutions of linear PDEs on compact manifolds (see Theorem~\ref{PDE-theorem} below),
$V_0$ has a finite-dimensional subspace $V$ with the following property:
\begin{equation}\label{property}
\text{\parbox{.85\textwidth}{For every point $p\in M$ and normal vector $v\in \Tan(F,p)^\perp$,
there is an $f\in V$ such that $f(p)=v$.}}
\end{equation}
 By Theorem~\ref{families-theorem} and Remark~\ref{families-remark},
  there exist $\eps>0$ and smooth maps
\begin{align*}
  &\gamma:\BB^k(0,\eps) \to C^\infty(N),  \\
  &\Ff: \BB^k(0,\eps)\times M \to N
\end{align*}
such that Assertions~\eqref{initial-assertion} and~\eqref{minimal-assertion} hold and such that
\begin{equation}\label{onto}
  \text{$D\Ff_1(0,\cdot)$ is a surjection from $\RR^k$ onto $V$.}
\end{equation}

To complete the proof, we show that by replacing $\eps>0$ by a smaller
positive number $\eps'$ (and by replacing $\gamma$ and $\Ff$ by
their restrictions to $\BB^k(0,\eps')$ and $\BB^k(0,\eps')\times \Omega^kM$)
we can make $\Ff$ be a submersion.

Let $x\in M$, let $p=F(x)=\Ff(0,x)$, and let $v\in \Tan_pN$.
Write $v=v'+v''$ where $v'\in \Tan(F,x)^\perp$ and $v''\in \Tan(F,x)$.

By~\eqref{property}, there is an $f\in V$ such that $f(x)=v'$.

By~\eqref{onto}, there is a vector $\tau\in \RR^k$ such that 
\[
    D_1 \Ff(0,\cdot) \tau = f.
\]
Thus
\[
 D_1\Ff(0,x)\tau = f(x) = v'.
\]
Since $v''\in \Tan(F,x)$, there is a vector $\xi$ in $\Tan(M,x)$ such that
\[
   DF(x)\xi = v''.
\]
Since $\Ff(0,\cdot)=F(\cdot)$, we see that $D_2\Ff(0,x)=DF(x)$ and therefore
\[
   D_2\Ff(0,x)\xi = v''.
\]
Thus 
\begin{align*}
D\Ff(0,x)(\tau,\xi) 
&= D_1\Ff(0,x)\tau + D_2\Ff(0,x)\xi  \\
&= v' + v'' \\
&= v.
\end{align*}
We have shown that $D\Ff(0,x)$ is surjective for every $x\in M$.
Hence by replacing $\eps$ by a smaller $\eps>0$, we can guarantee that
$D\Ff$ is surjective at all points of $\BB^k(0,\eps)\times M$, i.e., that $\Ff$ is a submersion.
\end{proof}

\begin{theorem}\label{transversality-theorem}
Suppose that $N$ is a smooth manifold with a smooth Riemannian metric $g_0$,
and that $\Gamma$ is a smooth submanifold of $N$.
For a generic smooth metric $g$ conformal to $g_0$, the following holds:
if $F:M\to N$ is a simple, $g$-minimal immersion of a closed manifold $M$ into $N$,
then $F$ is transverse to $\Gamma$.
\end{theorem}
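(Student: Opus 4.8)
The plan is to combine the Submersion Theorem (Theorem~\ref{submersion-theorem}) with the Bumpy Metrics Corollary (Corollary~\ref{bumpy-corollary}), via a standard parametric transversality argument. First I would reduce to the case where $F$ has no nontrivial Jacobi fields: the ``bad'' set $\Kk\subset\MM$ will consist of pairs $(\gamma,[F])$ for which $F$ is $e^\gamma g_0$-minimal, simple, and \emph{not} transverse to $\Gamma$. By Corollary~\ref{bumpy-corollary}, to conclude that $\Pi(\Kk)$ is meager it suffices to show two things: (a) $\Kk$ is relatively closed in a suitable open subset $\UU$ of $\MM$; and (b) every nonempty open subset of $\MMreg$ contains a point of $\MM\setminus\Kk$, i.e., near any $(\gamma,[F])$ with no nontrivial Jacobi fields there is a nearby $(\gamma',[F'])$ which \emph{is} transverse to $\Gamma$. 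Closedness in (a) is a routine consequence of smooth compactness for minimal immersions together with the fact that non-transversality at some point is a closed condition (one takes $\UU$ to exclude, say, pairs where the image has exceeded a fixed area bound, working one area bound at a time since $\MM$ is second countable); I would not belabor this.

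The heart of the matter is (b), and here the Submersion Theorem does the work. Fix $(\gamma_0,[F_0])\in\MMreg$, so $F_0$ is a simple, $e^{\gamma_0}g_0$-minimal immersion with no nontrivial Jacobi fields. Apply Theorem~\ref{submersion-theorem} (with $g=e^{\gamma_0}g_0$) to obtain $\eps>0$, a smooth family of metrics $e^{\gamma(\tau)}g$ and a smooth map $\Ff:\BB^k(0,\eps)\times M\to N$ which is a \emph{submersion}, with $\Ff(\tau,\cdot)$ being $e^{\gamma(\tau)}g$-minimal and $\Ff(0,\cdot)=F_0$. Since $\Ff$ is a submersion, it is transverse to the submanifold $\Gamma\subset N$; hence by the parametric transversality theorem (Thom transversality), for almost every — in particular, for a dense set of — $\tau\in\BB^k(0,\eps)$, the slice $\Ff(\tau,\cdot):M\to N$ is transverse to $\Gamma$. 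Choosing such a $\tau$ arbitrarily close to $0$ produces a pair $(\gamma(\tau),[\Ff(\tau,\cdot)])$ arbitrarily close to $(\gamma_0,[F_0])$ in $\MM$ and lying in $\MM\setminus\Kk$. This establishes (b). (One should note the minor point that transversality of the map $\Ff(\tau,\cdot)$ to $\Gamma$ is precisely transversality of the immersion $F$ to $\Gamma$ in the sense being claimed, and that the nearby immersions remain simple for $\tau$ small, since simplicity is an open condition among $g$-minimal immersions by unique continuation.)

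With (a) and (b) in hand, Corollary~\ref{bumpy-corollary} gives that $\Pi(\Kk)$ is meager in $C^\infty(N)$; equivalently, for a generic $\gamma\in C^\infty(N)$ — hence for a generic metric $g=e^\gamma g_0$ conformal to $g_0$ — there is no simple $g$-minimal immersion of a closed manifold into $N$ that fails to be transverse to $\Gamma$. This is the assertion of the theorem.

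The main obstacle is the verification of item (b), or more precisely ensuring that the Submersion Theorem applies in the first place: it requires $F$ to have no nontrivial Jacobi fields, which is exactly why we must work relative to $\MMreg$ and invoke the Bumpy Metrics machinery rather than arguing directly. A secondary technical point, easy but necessary, is the bookkeeping that turns the local statement ``near each point of $\MMreg$ there is a transverse nearby immersion'' into the global meagerness conclusion; this is handled entirely by Corollary~\ref{bumpy-corollary} once one has checked the relative closedness hypothesis, which in turn uses second countability of $\MM$ to reduce to fixed area (and genus) bounds where smooth compactness applies.
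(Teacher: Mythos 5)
Your proposal is correct and follows essentially the same route as the paper: apply the Submersion Theorem at a point of $\MMreg$, use parametric transversality to find nearby transverse slices $\Ff(\tau_i,\cdot)$ with $\tau_i\to 0$, and feed the resulting density statement into Corollary~\ref{bumpy-corollary} with $\Kk$ the set of non-transverse pairs. The only difference is cosmetic: for the relative-closedness of $\Kk$ the paper simply observes that transversality is an open condition on $\MM$ (so $\Kk$ is closed in all of $\MM$), whereas you invoke compactness and area bounds, which is more machinery than is needed.
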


\begin{proof}
Let $F$ be a $g$-minimal immersion of a closed manifold $M$ into $N$ with no nontrivial Jacobi fields.
Let
\begin{align*}
  &\gamma:\BB^k(0,\eps)\to C^\infty(N), \, \text{and} \\
  &\Ff: \BB^k(0,\eps)\times M\to N
\end{align*}
be as in the Submersion Theorem (Theorem~\ref{submersion-theorem}).

Since $\Ff$ is a submersion, it is transverse to $\Gamma$.
Therefore (by the Parametric Transversality Theorem), $\Ff(\tau,\cdot):M\to N$
is transverse to $\Gamma$ for almost all $\tau$.

In particular, there is a sequence $\tau_i\to 0$ such that $\Ff(\tau_i,\cdot): M \to N$
is transverse to $\Gamma$.  
Theorem~\ref{transversality-theorem} now follows from the Bumpy Metrics Corollary~\ref{bumpy-corollary}.
(To see Corollary~\ref{bumpy-corollary} applies, let $Q$ be the set of points in $\MM$ corresponding to immersions
that are transverse to $\Gamma$.  Note that transversality is an open condition, so $Q$ is an open
subset of $\MM$.  Thus we can apply Corollary~\ref{bumpy-corollary} to the set $\Kk:=\MM\setminus Q$.)
\end{proof}

\begin{remark}\label{non-simple-remark}
Let $g$ be a metric satisfying the conclusion of Theorem~\ref{transversality-theorem}.
Then the conclusion also holds for non-simple immersions.
For let $F:M\to N$ be any $g$-minimal immersion of a closed manifold $M$ into $N$.
Using unique continuation, it is not hard to prove that $F$ factors through a simple immersion.
To be precise, there is a closed manifold $M'$, a simple immersion $F' : M' \to N$, and 
a covering map $\pi: M \to M'$ such that $F=F'\circ \pi$.
By hypothesis, $F'$ is transverse to $\Gamma$.  But then (trivially) $F$ is also transverse to $\Gamma$.
\end{remark}

\section{Strong Transversality}\label{strong-section}

\newcommand{\codimension}{\operatorname{codimension}}

If $S$ is a set, we let $\Delta^kS$ be the diagonal in $S^k$:
\[
    \Delta^kS = \{(x_1, x_2, \dots, x_k)\in S^k: x_1=x_2=\dots = x_k\},
\]
and we let
\[  
  \Omega^kS = \{ (x_1,\dots,x_k)\in S^k: \text{$x_i\ne x_j$ for every $i\ne j$} \}.
\]

\newcommand{\image}{\operatorname{image}}

\begin{definition}
Let $F:M\to N$ be a smooth map between smooth manifolds $M$ and $N$, let $\Gamma$ be a smooth submanifold 
of $N$, and let $k\ge 1$ be an integer.  We say that $F$ is {\bf $k$-transverse} to $\Gamma$
provided the map
\begin{align*}
&\widetilde F: \Omega^kM \to N^{k} \\
&\widetilde F(x_1,\dots,x_k) = (F(x_1), \dots, F(x_k))
\end{align*}
is transverse to $\Delta^k\Gamma$.
We say that $F$ is {\bf strongly transverse} to $\Gamma$ if it is $k$-transverse for every $k\ge 1$.
\end{definition}

Note that $1$-transversality is the same as transversality.
Theorem~\ref{more-intuitive-theorem} in Section~\ref{equivalence-section}
 gives a more geometrically intuitive description of strong transversality.

\begin{theorem}\label{multi-submersion-theorem}
Given $F$ as in \S\ref{notation-section} with no nontrivial Jacobi fields and a positive integer $k$,
 there exist $\eps>0$, $d<\infty$, and smooth maps
\begin{align*}
&\gamma: \BB^d(0,\eps) \to C^\infty(N), \\
&\Ff:\BB^d(0,\eps) \times M \to N
\end{align*}
with the following properties:
\begin{enumerate}[\upshape(1)]
\item\label{initial-again-item} $\gamma(0)=0$ and $\Ff(0,\cdot)=F(\cdot)$.
\item\label{minimal-again-item} For each $\tau$, the map $\Ff(\tau,\cdot):M\to N$ is a smooth immersion
that is minimal with respect to the metric $e^{\gamma(\tau)}g$.
\item\label{submersion-again-item} The map
\begin{align*}
&\widetilde\Ff: \BB^d(0,\eps)\times \Omega^kM  \to N^k, \\
&\widetilde \Ff(\tau,x_1,\dots, x_k) = (\Ff(\tau,x_1), \dots , \Ff(\tau,x_k))
\end{align*}
is a submersion at each point of $\Cc:=\widetilde \Ff^{-1}(\Delta^{k}N)$.
\end{enumerate}
\end{theorem}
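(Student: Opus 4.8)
The plan is to imitate the proof of the Submersion Theorem (Theorem~\ref{submersion-theorem}), but now forcing surjectivity of $D\widetilde\Ff$ only at points of $\Cc$, i.e. at $k$-tuples $(x_1,\dots,x_k)$ of distinct points of $M$ whose images under $F$ all coincide at a single point $p\in N$. As before, one first invokes the general PDE theorem (Theorem~\ref{PDE-theorem}) to produce a finite-dimensional subspace $V\subset V_0$ with a strengthened interpolation property: for every $k$-tuple $(x_1,\dots,x_k)\in\Omega^kM$ and every choice of normal vectors $v_j\in\Tan(F,x_j)^\perp$, $j=1,\dots,k$, there is an $f\in V$ with $f(x_j)=v_j$ for all $j$. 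This is exactly the kind of simultaneous-interpolation-at-finitely-many-points statement that the PDE theorem delivers (the harmonic-interpolation example quoted in the introduction is the model case), since the $x_j$ are distinct. Then Theorem~\ref{families-theorem} together with Remark~\ref{families-remark} gives $\eps>0$ and smooth maps $\gamma$, $\Ff$ satisfying \eqref{initial-again-item} and \eqref{minimal-again-item} such that $D_1\Ff(0,\cdot)$ maps $\RR^d$ onto $V$.

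Next I would check surjectivity of $D\widetilde\Ff(0,x_1,\dots,x_k)$ at every point of $\Cc$. Fix $(x_1,\dots,x_k)\in\Cc$, so $F(x_1)=\dots=F(x_k)=p$, and fix a target vector $(w_1,\dots,w_k)\in(\Tan_pN)^k$. Decompose each $w_j=w_j'+w_j''$ with $w_j'\in\Tan(F,x_j)^\perp$ and $w_j''\in\Tan(F,x_j)$. Using the interpolation property of $V$, pick $f\in V$ with $f(x_j)=w_j'$ for all $j$, and using surjectivity of $D_1\Ff(0,\cdot)$ onto $V$, pick $\tau\in\RR^d$ with $D_1\Ff(0,\cdot)\tau=f$, hence $D_1\Ff(0,x_j)\tau=w_j'$ for each $j$. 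For the tangential parts, for each $j$ choose $\xi_j\in\Tan(M,x_j)$ with $DF(x_j)\xi_j=w_j''$, which is possible since $DF(x_j)$ is an isomorphism onto $\Tan(F,x_j)$; because $\Ff(0,\cdot)=F$, we have $D_2\Ff(0,x_j)=DF(x_j)$, so $D_2\Ff(0,\cdot)$ applied to $(\xi_1,\dots,\xi_k)\in\bigoplus_j\Tan(M,x_j)=\Tan(\Omega^kM,(x_1,\dots,x_k))$ produces $(w_1'',\dots,w_k'')$. Adding the two contributions, $D\widetilde\Ff(0,x_1,\dots,x_k)$ hits $(w_1,\dots,w_k)$; the key point is that the single parameter vector $\tau$ simultaneously corrects all $k$ normal components, which is precisely what the strengthened interpolation property of $V$ buys us.

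Finally I would upgrade surjectivity at the fiber over $t=0$ to surjectivity on a neighborhood. Here one must be a little careful: $\Cc$ is closed in $\BB^d(0,\eps)\times\Omega^kM$ but $\Omega^kM$ is noncompact, so $\Cc$ itself need not be compact, and one cannot simply say "shrink $\eps$." However, $\widetilde\Ff^{-1}(\Delta^kN)$ meets $\{0\}\times\Omega^kM$ in the set $\{0\}\times(\widetilde F^{-1}(\Delta^kN))$, and since $M$ is compact and $F$ has multiplicity one almost everywhere (simplicity), this slice is a closed subset of $\Omega^kM$; the set of $k$-tuples of distinct points mapping to a common image point is compact because it is closed in $M^k$ (it is cut out by the closed conditions $F(x_i)=F(x_j)$) and avoids the diagonal only up to a compact distance — more precisely, $\overline{\widetilde F^{-1}(\Delta^kN)}$ in $M^k$ could touch $\Delta^kM$, but near $\Delta^kM$ the map $\widetilde\Ff$ is automatically a submersion for $\tau$ small (it restricts on the diagonal to $\Ff$ itself composed with the diagonal immersion $N\hookrightarrow N^k$, plus tangential directions), so these boundary points cause no trouble. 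I would therefore argue: on a fixed compact neighborhood $K$ in $M^k$ of the compact set $\widetilde F^{-1}(\Delta^kN)\cap\Omega^kM$ away from $\Delta^kM$, surjectivity of $D\widetilde\Ff$ is an open condition, so it persists for $\tau$ in some $\BB^d(0,\eps')$; on the complementary region near the diagonal, $\widetilde\Ff$ is a submersion by the remark just made, again for small $\tau$; and points of $\Cc$ outside $K$ with small $\tau$ simply do not exist once $\eps'$ is small enough, by continuity of $\widetilde\Ff$ and compactness of $M^k$. Replacing $\eps$ by this $\eps'$ gives \eqref{submersion-again-item}. The main obstacle is exactly this last compactness bookkeeping around the diagonal $\Delta^kM$; everything else is a direct transcription of the $k=1$ argument, with "interpolate one normal vector" replaced by "interpolate $k$ normal vectors at $k$ distinct points."
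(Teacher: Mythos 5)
Your overall strategy coincides with the paper's, and your verification of surjectivity at points of the $\tau=0$ slice is the same computation, but there are two concrete errors in the details. First, you invoke Theorem~\ref{PDE-theorem} to produce a finite-dimensional $V\subset V_0$ that interpolates arbitrary normal vectors at \emph{every} $k$-tuple in $\Omega^kM$. The PDE theorem does not give this: its assertion (2) requires the collection of $k$-element subsets to be \emph{compact}, and $\Omega^kM$ is not compact (it is $M^k$ minus the fat diagonal); moreover, for a fixed finite-dimensional $V$ the simultaneous evaluation map at $(x_1,\dots,x_k)$ degenerates as two of the points collide, so such uniform ampleness can genuinely fail near the diagonal. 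The correct move, and the one the paper makes, is to observe that the coincidence set $C=\{(x_1,\dots,x_k)\in\Omega^kM : F(x_1)=\dots=F(x_k)\}$ is a \emph{compact} subset of $\Omega^kM$ precisely because $F$ is an immersion (local injectivity keeps coincidence tuples uniformly away from the fat diagonal), and to apply Theorem~\ref{PDE-theorem}(2) only to $C$. Since your argument uses the interpolation property only at points of $C$, this is repairable, but as written the appeal to the PDE theorem is invalid.

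Second, in the shrinking-of-$\eps$ step you assert that near $\Delta^kM$ the map $\widetilde\Ff$ is ``automatically a submersion for $\tau$ small.'' This is false: at a point of $\Omega^kM$ close to the fat diagonal, the image of $D_1\widetilde\Ff$ lies close to the diagonal of $(\Tan_qN)^k$ and the images of the $D_2$ pieces are close to $k$ copies of a single $m$-plane, so $D\widetilde\Ff$ cannot be onto $(\Tan_qN)^k$ for $k\ge 2$. Fortunately the claim is unnecessary. The same local-injectivity observation as above shows that for small $\tau$ the set $\Cc$ stays in a fixed compact subset of the domain away from the fat diagonal (a sequence of coincidence tuples with $\tau_n\to 0$ subconverges in $M^k$; the limit either lies in $C$ or has two colliding entries, contradicting the uniform local injectivity of the immersions $\Ff(\tau,\cdot)$). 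Hence $\Cc$ is compact, the non-submersion locus $\Kk\subset\Cc$ is compact and disjoint from $\{\tau=0\}$ by your slice computation, and replacing $\eps$ by $\tfrac12\min_{\Kk}|\tau|$ finishes the proof --- which is exactly how the paper closes the argument. Your point that points of $\Cc$ outside a compact neighborhood of $C$ do not exist for small $\tau$ is the right one; the detour through the false submersion claim should be deleted.
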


\begin{proof}
Since $F$ is an immersion, the set
\[
  C: = \{(x_1,\dots,x_k)\in \Omega^kM: F(x_1)=F(x_2)=\dots = F(x_k) \}
\]
is a compact subset of $\Omega^kM$.

Hence, by a general PDE theorem (Theorem~\ref{PDE-theorem}) there is a finite-dimensional
subspace $V$ of $V_0$ with the following property:

\begin{equation}\label{multi-property}
\text{\parbox{.85\textwidth}{Given $(x_1,\dots,x_k)\in C$ and $v_i\in \Tan(F,x_i)^\perp$, there is 
a section $f\in V$ such that $f(x_i)=v_i$ for each $i\in \{1,\dots,k\}$.}}
\end{equation}
By Theorem~\ref{families-theorem}, 
there exist $\gamma$ and $\Ff$ such that~\eqref{initial-again-item} and~\eqref{minimal-again-item} hold
and such that
\begin{equation}\label{onto-again}
  \text{$D\Ff_1(0,\cdot)$ is a surjection from $\RR^d$ onto $V$.}
\end{equation}
It remains only to verify~\eqref{submersion-again-item}.

Define
\begin{align*}
&\rho: \RR^k\times \Omega^kM\to \RR, \\
&\rho(\tau,x_1,\dots, x_k) = |\tau|,
\end{align*}
and let
\begin{align*}
\Cc_0 
&= \rho^{-1}(0)\cap \Cc \\
&= \{(\tau,x_1,\dots,x_k): \text{$\tau=0$ and $(x_1,\dots,x_k)\in C$} \}.
\end{align*}

\begin{claim}\label{multi-submersion-claim}
$\widetilde\Ff$ is a submersion at each point $(0,x_1,\dots,x_k)$ of $\Cc_0$.
\end{claim}

\begin{proof}[Proof of claim]
Suppose $(0,x_1,\dots,x_k)\in \Cc_0$.
By definition of $\Cc_0$, 
\[
 F(x_1)=F(x_2)=\dots=F(x_k)=q
\]
 for some $q\in N$.
Let
\[
  (v_1,\dots,v_k)\in \Tan(N^k,(q,\dots,q)) = (\Tan(N,q))^k.
\]
Write $v_i=v_i'+v_i''$ where $v_i'\in \Tan(F,x_i)^\perp$
and $v_i''\in\Tan(F,x_i)$.

By~\eqref{multi-property}, there is an $f\in V$ such that $f(x_i)=v_i'$ for each $i$.
By~\eqref{onto-again}, there is a $\tau\in \RR^d$ such that
\[
   D_1\Ff(0,\cdot)\tau =  f.
\]
Thus
\[
   D_1\Ff(0,x_i)\tau = f(x_i) = v_i'.
\]

Since $v_i''\in \Tan(F,x_i)$, there is a vector $\xi_i\in \Tan(M,x_i)$ such that
\begin{equation*}
DF(x_i)\xi_i=v_i''.
\end{equation*}
Since $\Ff(0,\cdot)=F(\cdot)$,
\begin{equation}\label{xi_i}
D_2\Ff(0,x_i)\xi_i = DF(x_i)\xi_i = v_i''.
\end{equation}

Now
\begin{align*}
D\Ff(0,x_i)(\tau,\xi_i)
&=
D_1\Ff(0,x_i)\tau + D_2\Ff(0,x_i)\xi_i
\\
&=
v_i' + v_i''
\\
& = v_i.
\end{align*}
Consequently,
\[
D\widetilde\Ff(0,x_1,\dots,x_k)(\tau,\xi_1,\dots,\xi_k) = (v_1,\dots,v_k).
\]
This completes the proof of Claim~\ref{multi-submersion-claim}.
\end{proof}

Now let  $\Kk$ be the set of points in $\Cc$ where $\widetilde\Ff$ is not a submersion.
If $\Kk$ is empty, we are done.
Otherwise, note that $\Kk$ is compact (it is a closed subset of the compact set $\Cc$).  
By Claim~\ref{multi-submersion-claim}, $\rho(\cdot)>0$ at each point in $\Kk$.
Hence
\[
  \eps':= \min_{\Kk}\rho(\cdot)/2 > 0.
\]
Now replace $\eps$ by $\eps'$ (and therefore $\BB^d(0,\eps)$ by $\BB^d(0,\eps')$.)
\end{proof}

\begin{theorem}\label{strong-transversality-theorem}
Suppose that $N$ is a smooth manifold with a smooth Riemannian metric $g_0$,
and that $\Gamma$ is a smooth submanifold of $N$.
For a generic set of smooth metrics conformal to $g_0$, the following holds:
if $F:M\to N$ is a simple, $g$-minimal immersion of a closed manifold $M$ into $N$,
then $F$ is strongly transverse to $\Gamma$.
\end{theorem}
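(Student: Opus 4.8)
The plan is to combine the multi-submersion Theorem~\ref{multi-submersion-theorem} with the Parametric Transversality Theorem and the Bumpy Metrics Corollary~\ref{bumpy-corollary}, exactly as in the proof of Theorem~\ref{transversality-theorem}, but now carrying along all $k\ge 1$ simultaneously. First I would fix a $g$-minimal immersion $F:M\to N$ of a closed manifold with no nontrivial Jacobi fields, and fix an integer $k\ge 1$. Applying Theorem~\ref{multi-submersion-theorem}, we obtain $\eps>0$, $d<\infty$, and smooth maps $\gamma:\BB^d(0,\eps)\to C^\infty(N)$ and $\Ff:\BB^d(0,\eps)\times M\to N$ such that $\gamma(0)=0$, $\Ff(0,\cdot)=F$, each $\Ff(\tau,\cdot)$ is $e^{\gamma(\tau)}g$-minimal, and the associated map $\widetilde\Ff:\BB^d(0,\eps)\times\Omega^kM\to N^k$ is a submersion at each point of $\Cc=\widetilde\Ff^{-1}(\Delta^kN)$. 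In particular $\widetilde\Ff$ is transverse to $\Delta^k\Gamma$ (every point of $\widetilde\Ff^{-1}(\Delta^k\Gamma)$ lies in $\Cc$, since $\Delta^k\Gamma\subset\Delta^kN$, and there $\widetilde\Ff$ is a submersion, hence transverse to any submanifold).

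By the Parametric Transversality Theorem applied to $\widetilde\Ff$ and the submanifold $\Delta^k\Gamma$ of $N^k$, the slice $\widetilde\Ff(\tau,\cdot):\Omega^kM\to N^k$ is transverse to $\Delta^k\Gamma$ for almost every $\tau\in\BB^d(0,\eps)$; that is, $\Ff(\tau,\cdot)$ is $k$-transverse to $\Gamma$ for a.e.\ $\tau$. Hence there is a sequence $\tau_i\to0$ with $\Ff(\tau_i,\cdot)$ $k$-transverse to $\Gamma$. Now let $Q_k$ be the set of $(\gamma,[F'])\in\MM$ such that $F'$ is $k$-transverse to $\Gamma$. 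Since $k$-transversality of $\widetilde F'$ to $\Delta^k\Gamma$ is an open condition (transversality of a fixed map to a fixed closed submanifold is open in the $C^1$ topology on the map, and the relevant compact set $C$ of would-be coincidence points varies upper-semicontinuously), $Q_k$ is open in $\MM$. The argument above shows that every point of $\MMreg$ is a smooth limit of points of $Q_k$, so every nonempty open subset of $\MMreg$ meets $Q_k$; applying Corollary~\ref{bumpy-corollary} with $\Kk=\MM\setminus Q_k$ shows that $\Pi(\MM\setminus Q_k)$ is meager in $C^\infty(N)$. Equivalently, the set of $\gamma$ for which every simple $e^\gamma g_0$-minimal immersion of a closed manifold is $k$-transverse to $\Gamma$ is comeager.

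Finally, a countable intersection of comeager sets is comeager, so the set of $\gamma$ for which every simple $e^\gamma g_0$-minimal immersion of a closed manifold is $k$-transverse to $\Gamma$ \emph{for every} $k\ge1$ is comeager in $C^\infty(N)$. For such $\gamma$ and the corresponding metric $g=e^\gamma g_0$, any simple $g$-minimal immersion $F:M\to N$ of a closed manifold is $k$-transverse to $\Gamma$ for all $k$, i.e.\ strongly transverse to $\Gamma$. This proves the theorem.

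\textbf{Main obstacle.} The one point requiring genuine care is the openness of $Q_k$ in $\MM$. Unlike the case $k=1$, the ``diagonal'' domain $\Omega^kM$ is noncompact, and the set $C$ of $k$-tuples of distinct points with common image can approach the boundary of $\Omega^kM$ as the immersion varies; one must check that coincidence $k$-tuples cannot escape to infinity (i.e.\ collide) in a way that destroys openness, using that the nearby immersions are uniformly immersions and that $C$ for $F$ itself is compact in $\Omega^kM$. Once this semicontinuity is in hand, the rest is a routine assembly of the Submersion-type input, the Parametric Transversality Theorem, the Bumpy Metrics Corollary, and the Baire category fact that countable intersections of comeager sets are comeager.
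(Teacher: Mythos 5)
Your proposal is correct and follows essentially the same route as the paper: apply Theorem~\ref{multi-submersion-theorem}, deduce transversality of $\widetilde\Ff$ to $\Delta^k\Gamma$, use the Parametric Transversality Theorem to get a sequence $\tau_i\to 0$ with $\Ff(\tau_i,\cdot)$ $k$-transverse to $\Gamma$, invoke the Bumpy Metrics Corollary with $\Kk=\MM\setminus Q_k$, and intersect over $k$. The openness of $Q_k$ that you flag as the main obstacle is exactly the point the paper passes over with the remark that ``$k$-transversality is an open condition,'' and your sketch of why coincidence $k$-tuples cannot collide (compactness of $C$ in $\Omega^kM$ plus uniform immersivity) is the right justification.
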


\begin{proof}
It suffices to show for each $k$ that the theorem holds with ``$k$-transversality" in place
of ``strong transversality".

Let $F$ be a $g$-minimal immersion of a closed manifold $M$ into $N$ with no nontrivial Jacobi fields.
Let
\begin{align*}
  &\gamma:\BB^d(0,\eps)\to C^\infty(N),  \\
  &\Ff: \BB^d(0,\eps)\times M\to N, \, \text{and} \\
  &\widetilde\Ff: \BB^d(0,\eps)\times \Omega^kM \to N^k
\end{align*}
be as in the Theorem~\ref{multi-submersion-theorem}.

Since $\widetilde{\Ff}$ is a submersion at all points of $\widetilde{\Ff}^{-1}(\Delta^kN)$,
it is  transverse to $\Delta^k\Gamma$.
By the Parametric Transversality Theorem, $\widetilde\Ff(\tau,\cdot):M\to N$
is transverse to $\Delta^k\Gamma$ for almost all $\tau$.
Therefore $\Ff(\tau,\cdot):M\to N$ is $k$-transverse to $\Gamma$ for almost all $\tau$.

In particular, there is a sequence $\tau_i\to 0$ such that $\Ff(\tau_i,\cdot): M \to N$
is $k$-transverse to $\Gamma$.  
Theorem~\ref{strong-transversality-theorem} (with ``$k$-transverse" in place of ``strongly transverse")
now follows from the Bumpy Metrics Corollary~\ref{bumpy-corollary}.
(Let $Q$ be the set of points in $\MM$ corresponding to immersions
that are $k$-transverse to $\Gamma$.  Since $k$-transversality is an open condition, $Q$ is an open
subset of $\MM$.  Thus we can apply Corollary~\ref{bumpy-corollary} to the set $K:=\MM\setminus Q$.)
\end{proof}

\begin{definition}
A smooth immersion $F:M\to N$ is called ``$k$-self-transverse" provided
$F$ is $k$-transverse to $N$.  We say that $F$ is {\bf strongly self-transverse}
if it is $k$-self-transverse for every $k\ge 1$.
\end{definition}

The immersion $F$ is $2$-self-transverse if and only if it is self-transverse as defined in the
introduction.

As the special case $\Gamma=N$ of Theorem~\ref{strong-transversality-theorem}, we have

\begin{theorem}\label{strong-selfie-theorem}
Suppose that $N$ is a smooth manifold with a smooth Riemannian metric $g_0$.
For a generic set  of smooth metrics $g$ conformal to $g_0$, the following holds:
if $F:M\to N$ is a simple, $g$-minimal immersion of a closed manifold $M$ into $N$,
then $F$ is strongly self-transverse.
\end{theorem}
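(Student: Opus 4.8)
The plan is to derive Theorem~\ref{strong-selfie-theorem} as the special case $\Gamma=N$ of the already-established Theorem~\ref{strong-transversality-theorem}, so essentially no new work is required. First I would recall that $N$ is itself a smooth submanifold of $N$ (of codimension $0$), so the hypothesis of Theorem~\ref{strong-transversality-theorem} is satisfied with $\Gamma=N$. Applying that theorem, there is a generic set of smooth metrics $g$ conformal to $g_0$ such that every simple $g$-minimal immersion $F:M\to N$ of a closed manifold is strongly transverse to $N$.

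Next I would unwind the definitions to see that this is exactly the desired conclusion. By definition, $F$ is strongly transverse to $N$ precisely when $F$ is $k$-transverse to $N$ for every $k\ge 1$; and $F$ is $k$-transverse to $N$ precisely when the map $\widetilde F:\Omega^kM\to N^k$ is transverse to $\Delta^kN$. But by the definition of $k$-self-transversality, $F$ is $k$-self-transverse exactly when $F$ is $k$-transverse to $N$, and $F$ is strongly self-transverse exactly when it is $k$-self-transverse for every $k$. Hence strong transversality to $N$ and strong self-transversality are literally the same condition, so the generic set produced above works verbatim.

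Since everything is purely a matter of matching definitions, there is no real obstacle; the only point worth a sentence is the observation that $\Delta^kN$ is a smooth submanifold of $N^k$ (it is the image of the smooth proper embedding $x\mapsto(x,\dots,x)$), so that ``transverse to $\Delta^kN$'' makes sense and the machinery of Theorem~\ref{strong-transversality-theorem} applies without modification. The proof therefore consists of a single invocation of the earlier theorem together with the remark that its conclusion in this case is, by definition, strong self-transversality.

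\begin{proof}
This is the special case $\Gamma=N$ of Theorem~\ref{strong-transversality-theorem}. Indeed, $N$ is a smooth submanifold of itself, and for each $k\ge 1$ the diagonal $\Delta^kN$ is a smooth submanifold of $N^k$. By Theorem~\ref{strong-transversality-theorem} (with $\Gamma=N$), there is a generic set of smooth metrics $g$ conformal to $g_0$ such that every simple, $g$-minimal immersion $F:M\to N$ of a closed manifold $M$ into $N$ is strongly transverse to $N$, i.e., $k$-transverse to $N$ for every $k\ge 1$. By definition, $F$ being $k$-transverse to $N$ means precisely that $F$ is $k$-self-transverse, and $F$ being strongly transverse to $N$ means precisely that $F$ is strongly self-transverse. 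Hence, for such a metric $g$, every simple $g$-minimal immersion of a closed manifold into $N$ is strongly self-transverse.
\end{proof}
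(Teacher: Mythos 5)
Your proposal is correct and is exactly the paper's argument: the paper introduces Theorem~\ref{strong-selfie-theorem} with the phrase ``As the special case $\Gamma=N$ of Theorem~\ref{strong-transversality-theorem}, we have,'' and the definition of $k$-self-transversality is literally $k$-transversality to $N$. Nothing further is needed.
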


{\color{blue}
\begin{corollary}\label{strong-selfie-corollary}
In Theorem~\ref{strong-selfie-theorem},
the following holds for a generic set  of smooth metrics $g$ conformal to $g_0$:
if $F:M\to N$ is a simple, $g$-minimal immersion of a closed manifold $M$ into $N$
and if $\dim(M)< \frac12\dim(N)$, 
then $F$ is an embedding.
\end{corollary}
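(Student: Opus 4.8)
The plan is to deduce this directly from Theorem~\ref{strong-selfie-theorem} by an elementary dimension count, so I do not expect any serious obstacle.

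First I would let $g$ lie in the generic set of metrics furnished by Theorem~\ref{strong-selfie-theorem}, and let $F:M\to N$ be an arbitrary simple, $g$-minimal immersion of a closed manifold with $\dim(M)<\frac12\dim(N)$. By Theorem~\ref{strong-selfie-theorem}, $F$ is strongly self-transverse; in particular it is $2$-self-transverse, which by definition means that the map
\[
   \widetilde F:\Omega^2M\to N^2,\qquad \widetilde F(x_1,x_2)=(F(x_1),F(x_2)),
\]
is transverse to the diagonal $\Delta^2N\subset N^2$.

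Next I would carry out the dimension count. Put $m=\dim(M)$ and $n=\dim(N)$, so that $2m<n$. The diagonal $\Delta^2N$ is a submanifold of $N^2$ of dimension $n$, hence of codimension $n$, while $\dim(\Omega^2M)=2m$. If there were a point $x=(x_1,x_2)\in\widetilde F^{-1}(\Delta^2N)$, transversality would force
\[
   \dim\bigl(\image D\widetilde F_x\bigr)+\dim\Tan_{\widetilde F(x)}(\Delta^2N)\ \ge\ \dim\Tan_{\widetilde F(x)}(N^2),
\]
that is, $2m+n\ge 2n$, contradicting $2m<n$. Therefore $\widetilde F^{-1}(\Delta^2N)$ is empty; equivalently, there is no pair $x_1\ne x_2$ in $M$ with $F(x_1)=F(x_2)$, so $F$ is injective.

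Finally I would invoke the standard fact that an injective immersion of a compact manifold into a Hausdorff manifold is an embedding: injectivity together with compactness of $M$ makes $F$ a homeomorphism onto its image, and $F$ is already an immersion. The only point worth emphasizing is that $2$-self-transversality by itself---not the full force of strong self-transversality---is all that the argument uses; after that observation the proof is just a routine transversality codimension count, and I anticipate no real difficulty.
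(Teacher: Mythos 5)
Your proof is correct and is exactly the routine dimension count the paper leaves implicit (the corollary is stated without proof): $2$-self-transversality plus $2\dim(M)<\dim(N)$ forces $\widetilde F^{-1}(\Delta^2N)=\emptyset$, so $F$ is injective, and an injective immersion of a compact manifold is an embedding. No issues.
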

}  

\section{The Geometry of Strong Transversality}\label{equivalence-section}


In this section, we give a more geometrically intuitive characterization of
   strong transversality of maps.
  Nothing in the section is required for
 the rest of the paper.
 
\begin{definition}\label{strongly-transverse-subspace-definition}
Let $V_1, \dots, V_k$ be linear subspaces of a finite-dimensional Euclidean space $V$,
We say that $V_1,\dots,V_k$ are {\bf  strongly transverse} provided the following holds:
if $v_i\in V_i^\perp$ and $\sum_iv_i=0$, then $v_1=\dots=v_k=0$.
\end{definition}

The following theorem gives equivalent characterizations of strong transversality of linear
subspaces.  In particular, it shows that whether $V_1,\dots,V_k$ are strongly transverse
does not depend on choice of the inner product on $V$.

\begin{theorem}\label{linear-equivalence-theorem}
Let $V_1, \dots, V_{k+1}$ be linear subspaces of a finite-dimensional Euclidean space $V$.
Let $\Pi_i:V\to V_i^\perp$ be the orthogonal projection, and let
\begin{align*}
&L: (\cap_iV_i)^\perp \to V_1^\perp \times \dots\times V_{k+1}^\perp, \\
&L(v) = (\Pi_1v,\dots, \Pi_{k+1} v).
\end{align*}
The following are equivalent:
\begin{enumerate}
\item\label{L-isomorphism}The map $L$ is an isomorphism.
\item\label{L-surjective} The map $L$ is surjective.
\item\label{dim-equality} $\dim((\cap_iV_i)^\perp)=\sum \dim(V_i^\perp)$.
\item\label{dim-inequality} $\dim((\cap_iV_i)^\perp) \ge \sum \dim(V_i^\perp)$.
\item\label{transverse-item} $V_1\times\dots\times V_k$ is transverse to $\Delta^kV_{k+1}$.  That is, 
\[ 
   V^k = (V_1\times V_2 \times\dots\times V_k) + \Delta^kV_{k+1}.
\]
\item\label{T-isomorphism} The map 
\begin{align*}
      &T: V_1^\perp\times\dots \times V_{k+1}^\perp \to (\cap_iV_i)^\perp, \\
      &T(v_1,\dots,v_{k+1})= v_1 + \dots + v_{k+1}
\end{align*}
is an isomorphism.
\item\label{T-injective} The map $T$ in~\eqref{T-isomorphism} is injective.
\item\label{rephrase} The spaces $V_1,\dots,V_{k+1}$ are strongly transverse in $V$.
\end{enumerate}
\end{theorem}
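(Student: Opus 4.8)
The plan is to prove the chain of equivalences by exploiting the rank--nullity duality between the two linear maps $L$ and $T$. The key observation is that $L$ and $T$ are, up to the obvious identifications, adjoints of one another: if we equip $(\cap_i V_i)^\perp$ and $V_1^\perp\times\dots\times V_{k+1}^\perp$ with their natural inner products, then for $v\in(\cap_iV_i)^\perp$ and $(v_1,\dots,v_{k+1})\in\prod_i V_i^\perp$ we have
\[
\langle L v,\,(v_1,\dots,v_{k+1})\rangle \;=\; \sum_i\langle \Pi_i v,\,v_i\rangle \;=\; \sum_i\langle v,\,v_i\rangle \;=\; \langle v,\,T(v_1,\dots,v_{k+1})\rangle,
\]
using that $\Pi_i$ is the orthogonal projection onto $V_i^\perp$ and $v_i\in V_i^\perp$. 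Here one must also check that $T$ actually maps into $(\cap_iV_i)^\perp$, i.e.\ that each $V_i^\perp\subseteq(\cap_iV_i)^\perp$, which is immediate since $\cap_iV_i\subseteq V_i$. Thus $T=L^*$.

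With that in hand, the argument is largely bookkeeping. First I would establish \eqref{L-surjective} $\Leftrightarrow$ \eqref{T-injective} (adjoint of a surjection is injective) and \eqref{L-isomorphism} $\Leftrightarrow$ \eqref{T-isomorphism}. Next, \eqref{dim-equality} $\Rightarrow$ \eqref{dim-inequality} is trivial, and \eqref{dim-inequality} $\Rightarrow$ \eqref{L-surjective} requires showing $L$ is \emph{injective} without hypotheses: if $Lv=0$ then $\Pi_i v=0$ for all $i$, so $v\in\cap_i V_i$, but also $v\in(\cap_iV_i)^\perp$, hence $v=0$. So $L$ is always injective, giving $\dim((\cap_iV_i)^\perp)\le\sum\dim(V_i^\perp)$ always; combined with \eqref{dim-inequality} this forces equality of dimensions, and an injective map between spaces of equal (finite) dimension is an isomorphism, yielding \eqref{L-isomorphism}, hence also \eqref{dim-equality} and \eqref{L-surjective}. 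Conversely \eqref{L-surjective} plus the automatic injectivity gives \eqref{L-isomorphism} and \eqref{dim-equality}. This closes the loop among \eqref{L-isomorphism}, \eqref{L-surjective}, \eqref{dim-equality}, \eqref{dim-inequality}, \eqref{T-isomorphism}, \eqref{T-injective}.

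It remains to fold in \eqref{transverse-item} and \eqref{rephrase}. For \eqref{rephrase}: the condition ``$v_i\in V_i^\perp$ and $\sum v_i=0$ implies all $v_i=0$'' is exactly injectivity of $T$, so \eqref{rephrase} $\Leftrightarrow$ \eqref{T-injective} essentially by definition. For \eqref{transverse-item}: transversality of $V_1\times\dots\times V_k$ to $\Delta^k V_{k+1}$ in $V^k$ means $V^k=(V_1\times\dots\times V_k)+\Delta^k V_{k+1}$; taking orthogonal complements, this holds iff $(V_1\times\dots\times V_k)^\perp\cap(\Delta^kV_{k+1})^\perp=\{0\}$. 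Now $(V_1\times\dots\times V_k)^\perp=V_1^\perp\times\dots\times V_k^\perp$, and a vector $(w_1,\dots,w_k)$ lies in $(\Delta^k V_{k+1})^\perp$ iff $\sum_i\langle w_i,u\rangle=0$ for all $u\in V_{k+1}$, i.e.\ iff $\sum_i w_i\in V_{k+1}^\perp$. So \eqref{transverse-item} says: if $w_i\in V_i^\perp$ for $i\le k$ and $\sum_{i=1}^k w_i\in V_{k+1}^\perp$, then all $w_i=0$; writing $w_{k+1}:=-\sum_{i=1}^k w_i\in V_{k+1}^\perp$, this is precisely $T(w_1,\dots,w_{k+1})=0\Rightarrow$ all zero, i.e.\ \eqref{T-injective} again. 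I do not anticipate a genuine obstacle here; the only mildly delicate point is getting the orthogonal-complement identity $(\Delta^kV_{k+1})^\perp=\{(w_1,\dots,w_k):\sum w_i\in V_{k+1}^\perp\}$ right and being careful that all dimension counts are of orthogonal complements inside the ambient $V$ (so everything is finite-dimensional and ``injective $+$ equal dimensions $\Rightarrow$ isomorphism'' is legitimate).
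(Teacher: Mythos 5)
Your proof is correct, and it diverges from the paper's in one substantive place: the treatment of the transversality condition (5). The paper proves (2) $\Leftrightarrow$ (5) by hand, constructing explicit vectors ($u$, $w$, then $\sigma = u + v_{k+1} - w$, then subtracting the projection onto $\cap_i V_i$) to pass between a surjectivity statement for $L$ and the sum decomposition $V^k = (V_1\times\dots\times V_k) + \Delta^k V_{k+1}$; this is the longest and most computational part of its argument. You instead dualize: $A+B = V^k$ iff $A^\perp \cap B^\perp = \{0\}$, you compute $(V_1\times\dots\times V_k)^\perp = V_1^\perp\times\dots\times V_k^\perp$ and $(\Delta^k V_{k+1})^\perp = \{(w_1,\dots,w_k) : \sum_i w_i \in V_{k+1}^\perp\}$, and you land directly on injectivity of $T$, i.e., condition (7). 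Both reductions are correct (the passage between your reformulation of (5) and (7) via $w_{k+1} := -\sum_{i\le k} w_i$ works in both directions), and your route is shorter and makes the logical structure more transparent: everything funnels through the single observation $T = L^*$ together with the automatic injectivity of $L$. The remainder of your argument --- the verification that $T$ lands in $(\cap_i V_i)^\perp$, the adjoint computation $\langle Lv, (v_1,\dots,v_{k+1})\rangle = \langle v, T(v_1,\dots,v_{k+1})\rangle$ using $\langle \Pi_i v, v_i\rangle = \langle v, v_i\rangle$ for $v_i \in V_i^\perp$, the closing of the loop among (1)--(4) via ``injective plus equal dimensions implies isomorphism,'' and the identification of (8) with (7) --- coincides with what the paper does. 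What the paper's more pedestrian proof of (2) $\Leftrightarrow$ (5) buys is independence from the duality machinery for that step; what yours buys is brevity and a uniform mechanism.
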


From Condition~\eqref{dim-equality}, it is easy to check that $V_1, V_2$ are strongly transverse
 if and only if $V_1+V_2=V$.  Thus in the case of a pair
of subspaces, strong transversality and transversality are the same.

\begin{proof}[Proof of Theorem~\ref{linear-equivalence-theorem}]
That $L$ is injective follows immediately from its definition.
Thus the following are equivalent: 
(i) $L$ is an isomorphism, (ii) $L$ is surjective, (iii) the dimensions of the domain and range are equal,
(iv) the dimension of the domain is $\ge$ the dimension of the target.
Now the dimensions of the domain and target are the right and left sides of the equation in~\eqref{dim-equality}.
This proves the equivalence of~\eqref{L-isomorphism}--\eqref{dim-inequality}.

To show that~\eqref{L-surjective} implies~\eqref{transverse-item},
 assume that~\eqref{L-surjective} holds and let $(v_1,\dots,v_k)\in V^k$.
Then $(v_1,\dots,v_k,0)\in V_1\times\dots\times V_{k+1}$, 
so by~\eqref{L-surjective}, there is a vector $v\in (\cap_iV_i)^\perp$ such that
\begin{align*}
    \Pi_iv &= \Pi_iv_i \quad\text{for $i\le k$, and} \\
    \Pi_{k+1}v &= 0.
\end{align*}
Thus $v\in V_{k+1}$, and, for $i\le k$, $\Pi_i(v_i-v)=0$, so $v_i-v\in V_i$.
Consequently,
\begin{align*}
(v_1,\dots,v_k)
&=
(v_1-v, v_2-v,\dots,v_k-v) + (v,v,\dots,v) \\
&\in
(V_1\times V_2 \times\dots\times V_k) + \Delta^kV_{k+1}.
\end{align*}
This completes the proof that~\eqref{L-surjective} implies~\eqref{transverse-item}.

Now suppose that~\eqref{transverse-item} holds 
and let $(v_1,\dots,v_{k+1})\in V_1^\perp\times\dots\times V_{k+1}^\perp$.
Then $(v_1,\dots,v_k)\in V^k$, so 
by~\eqref{transverse-item}, there exists $u_i$ in $V_i$ (for $i\le k$) and $u\in V_{k+1}$ such that
\[
 (v_1,\dots, v_k) = (u_1,\dots,u_k) + (u ,\dots,u).
\]
Thus $v_i=u_i+u$ for $i\le k$, so, applying $\Pi_i$ gives
\begin{equation}\label{sagaroon}
  v_i = \Pi_i u \quad (i\le k)
\end{equation}
since $v_i\in V_i^\perp$ and $u_i\in V_i$.  Also by (5), there exist $w_i\in V_i$ ($i\le k$) and $w \in V_{k+1}$
such that
\[
  (v_{k+1}, \dots,v_{k+1}) = (w_1,\dots,w_k) + (w,\dots,w).
\]
Thus $v_{k+1}=w_i + w$ for $i\le k$, so applying $\Pi_i$ gives
\begin{equation}\label{lionel}
    \Pi_i v_{k+1} = \Pi_i w \quad (\text{for $i\le k$}).
\end{equation}
Now let 
\begin{equation}\label{v-formula}
\sigma =u + v_{k+1} - w.
\end{equation}
Applying $\Pi_i$ gives
\begin{equation}\label{OK-to-k}
   \Pi_i \sigma = v_i \quad\text{for $i\le k$},
\end{equation}
by~\eqref{sagaroon} and~\eqref{lionel}.
Applying $\Pi_{k+1}$ to~\eqref{v-formula} gives
\begin{equation}\label{OK-beyond}
  \Pi_{k+1}\sigma = v_{k+1}
\end{equation}
since $u$ and $w$ are in $V_{k+1}$.

  Now let $v = \sigma - \Pi \sigma$, where $\Pi: V\to \cap_iV_i$ is the orthogonal projection.
Then $v \in (\cap_i V_i)^\perp$ and $\Pi_iv=v_i$ for all $i\le k+1$ by~\eqref{OK-to-k} and~\eqref{OK-beyond}.
Thus~\eqref{L-surjective} holds.  This completes the proof that~\eqref{transverse-item} implies~\eqref{L-surjective}.

Note that~\eqref{T-isomorphism} and~\eqref{T-injective} 
are equivalent to~\eqref{L-isomorphism} and~\eqref{L-surjective}, respectively, because $T$ is the adjoint of the map $L$.
Finally, \eqref{T-injective} and~\eqref{rephrase} are trivially equivalent.
\end{proof}

\begin{theorem}\label{more-intuitive-theorem}
Suppose that $F:M\to N$ is a smooth immersion, that $\Gamma$ is a smooth submanifold of $N$,
and that $p$ is a point in $\Gamma$.
Let $V_1,V_2,\dots, V_k$ be the tangent planes to the sheets of $F:M\to N$ that
pass through $p$.   Then 
\begin{enumerate}  
\item $F$ is strongly transverse to $\Gamma$ at $p$ if and only $V_1,V_2,\dots, V_k ,\Tan(\Gamma,q)$
   are strongly transverse subspaces of $\Tan(N,q)$.
\item $F$ is strongly self-transverse at $p$ if and only if $V_1,V_2,\dots,V_k$ are strongly transverse subspaces of
   $\Tan(N,p)$.
\end{enumerate}
\end{theorem}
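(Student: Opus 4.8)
The plan is to reduce both assertions of Theorem~\ref{more-intuitive-theorem} to the linear-algebraic equivalence already established in Theorem~\ref{linear-equivalence-theorem}, simply by unwinding the definitions of $k$-transversality and of the maps $\widetilde F$. The point is that the relevant derivative condition, evaluated at a point $(x_1,\dots,x_k)$ with all $F(x_i)=p$, is \emph{exactly} the surjectivity-of-$L$ (equivalently, injectivity-of-$T$, equivalently, strong transversality of subspaces) condition from Theorem~\ref{linear-equivalence-theorem}, applied to the tangent planes $V_i=\Tan(F,x_i)$ inside $V=\Tan(N,p)$.

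First I would set up (1). Fix $p\in\Gamma$ and enumerate the sheets of $F$ through $p$ by points $x_1,\dots,x_k\in M$ with $F(x_i)=p$ and $V_i=DF(x_i)(\Tan(M,x_i))$; note the $x_i$ may be distinct points or a single point traversed with multiplicity, but in all cases they are pairwise distinct, so $(x_1,\dots,x_k)\in\Omega^kM$ and this tuple lies in $\widetilde F^{-1}(\Delta^k\Gamma)$. One must check (a routine point) that strong transversality of $F$ to $\Gamma$ ``at $p$'' reduces to the $k$-transversality condition at this particular tuple, since $k$-transversality for each $k$ and at each preimage tuple is what the definition demands, and tuples with repeated sheet-indices contribute nothing new. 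Now $\widetilde F:\Omega^kM\to N^k$ being transverse to $\Delta^k\Gamma$ at $(x_1,\dots,x_k)$ means
\[
  \operatorname{image}\big(D\widetilde F(x_1,\dots,x_k)\big) + \Tan(\Delta^k\Gamma,(p,\dots,p)) = \Tan(N^k,(p,\dots,p)).
\]
The first summand is $V_1\times\dots\times V_k$ (since $D\widetilde F$ acts diagonally on the $\Tan(M,x_i)$), and $\Tan(\Delta^k\Gamma,(p,\dots,p))=\Delta^k\Tan(\Gamma,p)$. So transversality at this tuple is precisely condition~\eqref{transverse-item} of Theorem~\ref{linear-equivalence-theorem} with $(V_1,\dots,V_k,V_{k+1})=(V_1,\dots,V_k,\Tan(\Gamma,p))$, which by that theorem is equivalent to~\eqref{rephrase}: $V_1,\dots,V_k,\Tan(\Gamma,p)$ are strongly transverse. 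Ranging over all $k$ (equivalently, taking the full collection of sheets) gives assertion~(1). Assertion~(2) is the special case $\Gamma=N$: strong self-transversality is strong transversality to $N$, $\Tan(N,p)=V$, and $V_{k+1}=V$ drops out of the list since $V^\perp=\{0\}$, leaving exactly the condition that $V_1,\dots,V_k$ be strongly transverse in $\Tan(N,p)$.

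The main obstacle is bookkeeping rather than mathematics: one must be careful about what ``at $p$'' means and about the role of sheet multiplicity. Concretely, the subtle point is that a single $k$ in the definition of $k$-transversality corresponds to looking at all tuples in $\Omega^kM$, including those where several indices land on the \emph{same} sheet through $p$; I should observe that adding a repeated sheet $V_i=V_j$ to the list cannot destroy strong transversality of the subspaces (if $v_i\in V_i^\perp$ for the enlarged list sum to zero then so does the sublist, after combining the two copies), so the ``worst case'' is exactly the list of distinct sheets, and that case is captured by taking $k$ equal to the number of sheets through $p$. Everything else — that $D\widetilde F$ is the product of the $DF(x_i)$, that $\Tan(\Delta^k\Gamma)=\Delta^k\Tan(\Gamma)$ — is immediate from the product structure, so once the definitional matching is spelled out the theorem follows directly from Theorem~\ref{linear-equivalence-theorem}.
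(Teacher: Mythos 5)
Your proposal is correct and is exactly the paper's argument (the paper's entire proof is ``This follows immediately from Condition~(5) in Theorem~\ref{linear-equivalence-theorem}''); you have simply filled in the routine identification of $\operatorname{image}(D\widetilde F)$ with $V_1\times\dots\times V_k$ and of $\Tan(\Delta^k\Gamma)$ with $\Delta^k\Tan(\Gamma,p)$. One small bookkeeping slip: since tuples in $\Omega^kM$ have pairwise distinct entries and each sheet through $p$ corresponds to a single preimage point, no tuple can ``repeat a sheet''; the case actually needing the padding-with-zeros observation is that of tuples selecting a proper \emph{sublist} of the sheets (smaller $k$), which your same argument handles.
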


\begin{proof}
This follows immediately from Condition~\eqref{transverse-item} in Theorem~\ref{linear-equivalence-theorem}.
\end{proof}

In Theorem~\ref{more-intuitive-theorem}, $F$ can actually be any smooth map (not necessarily an immersion).
In the general case, we let $x_1,\dots,x_k$ be the points of $F^{-1}(p)$ and (for each $i$) 
we let  $V_i$ be the image of $\Tan(M,x_i)$ under $DF(x_i)$.

\section{Ample Spaces}\label{ample-section}

In the next section, we prove the general theorems about linear PDE that were
needed in the proof of the main theorem (Theorem~\ref{strong-transversality-theorem}).
  In this section, we present a few preliminaries
results about spaces of sections of vector bundles.

Throughout this section, we fix a $d$-dimensional vector bundle over a Hausdorff space $M$.
If $S\subset M$, we let $K(S)$ denote the space of continuous sections over $S$,
i.e., the set of continuous maps that assign to each $x\in S$ a vector in the fiber at $x$.

Of course if the bundle is trivial over $S$, then $K(S)$ may be identified with $C^0(S,\RR^d)$.
Though we need the results of this section for vector bundles that may not be trivial,
the proofs are the same whether or not the bundle is trivial.  
Thus there is no real loss of generality if this section is read with a trivial bundle in mind.
That is, wherever we write $K(S)$, the reader may think $C^0(S,\RR^d)$.

Note that if $S\subset M$ is finite set with $k$ elements, then $K(S)$ is a finite-dimensional vector space,
since $K(S)\cong C^0(S,\RR^d) \cong \RR^{kd}$.

If $X\subset Y\subset M$ and if $V$ is a linear subspace of $K(Y)$, we
let 
\[
   V|_X = \{ f|_X: f\in V\},
\]
where $f|_X$ denotes the restriction of $f$ to $X$.

\begin{definition}
Suppose that $X\subset M$ and that $V$ is a linear subspace of $K(X)$.
Let $S$ be a finite subset of $X$.   We say that $V$ is {\bf ample} for $S$ if
\[
   V|_S = K(S).
\]
\end{definition}

In other words, $V$ is ample for $\{p_1, \dots, p_k\}$ provided the following
holds: given vectors $v_i$ at $p_i$, there is an $f\in V$ such that $f(p_i)=v_i$ for each $i\in \{1,2,\dots,k\}$.

We will also write  ``$V$ is ample for the point $p$"
to mean ``$V$ is ample for $\{p\}$".

Note that if $S$ is a finite set with more than one point, 
then being on ample for $S$ is much stronger than being ample for each point of $S$.
For example, suppose that the bundle is trivial, and 
let $V$ be the space of constant functions in $K(M)$.  Then $V$ is ample for each point of $M$,
but $V$ is not ample for any set with more than one point.

\begin{theorem}
Suppose that $X\subset M$, that $V$ and $V'$ are linear subspaces of $K(X)$, 
 and that $S$ is a finite subset of $X$.
 If $V$ is ample for $S$ and if $V'$ is dense in $V$, then $V'$ is ample for $S$.
 \end{theorem}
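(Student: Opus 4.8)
The plan is to exploit the finite‑dimensionality of $K(S)$, which reduces the statement to an elementary linear‑algebra fact about surjective linear maps and dense subspaces. First I would consider the restriction map
\[
  r : V \to K(S), \qquad r(f) = f|_S .
\]
Since $S$ is finite, $K(S) \cong \RR^{kd}$ is finite‑dimensional, so $r$ is a linear map from $V$ into a finite‑dimensional space. The hypothesis that $V$ is ample for $S$ says exactly that $r$ is surjective, i.e. $r(V) = K(S)$.

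Next I would use density. Because $r$ is (the restriction to $V$ of) a continuous linear map with respect to the topology of $K(X)$ — evaluation at finitely many points is continuous — and because $V'$ is dense in $V$, the image $r(V')$ is dense in $r(V) = K(S)$. But $r(V')$ is a linear subspace of the finite‑dimensional space $K(S)$, and a linear subspace of a finite‑dimensional normed space is closed; a dense closed subspace of $K(S)$ must be all of $K(S)$. Hence $r(V') = K(S)$, which is precisely the statement that $V'$ is ample for $S$.

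The only point that requires a word of care — and the one I would single out as the ``main obstacle,'' though it is mild — is making sure the topology on $K(X)$ (or on $V$) with respect to which $V'$ is dense is one for which pointwise evaluation at the finitely many points of $S$ is continuous; this holds for the compact‑open / uniform‑convergence topologies typically used for $K(X)$, and it is all that is needed. One could also phrase the argument without topology at all: since $\dim K(S) < \infty$, choose $f_1,\dots,f_N \in V$ whose restrictions span $K(S)$; density of $V'$ gives $f_j' \in V'$ with $f_j'|_S$ as close as we like to $f_j|_S$, and since the $f_j|_S$ are finitely many vectors spanning the finite‑dimensional $K(S)$, small perturbations still span, so $V'|_S = K(S)$. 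Either way the proof is a couple of lines once one notices that ampleness is surjectivity of a map into a finite‑dimensional target.
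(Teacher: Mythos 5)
Your argument is correct and is essentially identical to the paper's proof: both pass the density of $V'$ in $V$ through the restriction map to conclude that $V'|_S$ is dense in $K(S)$, and then use finite-dimensionality of $K(S)$ to upgrade density to equality. No further comment is needed.
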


\begin{proof}
Since $V'$ is dense in $V$ and since $V|_S = K(S)$, we see that $V'|_S$ is dense
in $K(S)$.  But since $K(S)$ is finite-dimensional, a subspace of $K(S)$ that is dense
in $K(S)$ must be all of $K(S)$.
\end{proof}

\begin{theorem}\label{ample-compact-point-theorem}
Suppose that $C\subset M$ is compact, that $V_0$ is a linear subspace of $K(M)$,
and that $V_0$ is ample for every $p\in C$.  Then $V_0$ has a finite-dimensional
subspace $V$ such that $V$ is ample for every $p\in C$.
\end{theorem}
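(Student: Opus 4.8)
The plan is a routine compactness-plus-continuity argument, exactly parallel to the way one extracts a finite subcover from a pointwise covering. Fix $p\in C$. Since $V_0$ is ample for $p$, the restriction map $V_0\to K(\{p\})$ is onto, so there exist sections $f^p_1,\dots,f^p_d\in V_0$ whose values $f^p_1(p),\dots,f^p_d(p)$ form a basis of the $d$-dimensional fiber over $p$. The first step is to upgrade ampleness at the single point $p$ to ampleness on a whole neighborhood: I claim the span $V_p:=\operatorname{span}\{f^p_1,\dots,f^p_d\}\subset V_0$ is ample for every $q$ in some open neighborhood $U_p$ of $p$.

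To see this, choose $U_p$ inside a neighborhood of $p$ over which the bundle is trivial (local triviality of a $d$-dimensional vector bundle), so that over $U_p$ each section is identified with a continuous $\RR^d$-valued function. On $U_p$ the determinant $q\mapsto \det\bigl(f^p_1(q),\dots,f^p_d(q)\bigr)$ is a continuous real-valued function, and it is nonzero at $q=p$; hence, after shrinking $U_p$, it is nonzero throughout $U_p$. Thus for every $q\in U_p$ the vectors $f^p_1(q),\dots,f^p_d(q)$ span the fiber over $q$, which is precisely the statement that $V_p$ is ample for $q$.

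Now $\{U_p\}_{p\in C}$ is an open cover of the compact set $C$, so there are finitely many points $p_1,\dots,p_m\in C$ with $C\subset U_{p_1}\cup\dots\cup U_{p_m}$. Set
\[
   V \;=\; V_{p_1}+V_{p_2}+\dots+V_{p_m}\ \subset\ V_0 .
\]
Then $V$ is finite-dimensional (a sum of finitely many subspaces each of dimension at most $d$). Given any $p\in C$, pick $i$ with $p\in U_{p_i}$; since $V_{p_i}\subset V$ and $V_{p_i}$ is ample for $p$, so is $V$. Hence $V$ is ample for every point of $C$, as required.

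I do not expect a genuine obstacle here; the only point requiring a word of care is the passage from ampleness at $p$ to ampleness on a neighborhood, which rests on local triviality of the bundle and the continuity of the determinant (equivalently, on the fact that "a given $d$-tuple of vectors spans the fiber" is an open condition). Everything else is the standard finite-subcover mechanism.
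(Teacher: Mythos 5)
Your proof is correct and follows essentially the same route as the paper: verify that ampleness at a point is an open condition on the point, extract a finite subcover of $C$, and take the sum of the corresponding finite-dimensional subspaces. The paper simply asserts the openness of the set where a finite-dimensional subspace is ample, whereas you supply the (correct) determinant/local-triviality justification explicitly.
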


\begin{proof}
Let $\Vv$ be the collection of all finite-dimensional subspaces of $V_0$.
For each $V\in \Vv$, let $U(V)$ be the set of points $x$ such that $V$ is ample at $x$.
Note that $U(V)$ is an open set.

If $p\in C$, then $V_0$ is ample for $p$, from which it trivially follows that $V_0$
has a finite-dimensional subspace that is ample at $p$.  Consequently, 
\[
   C \subset \cup_{V\in \Vv} U(V).
\]
Since $C$ is compact, this open cover has a finite subcover:
\[
  C \subset U(V_1)\cup \dots \cup U(V_k).
\]
Now let $V$ be $V_1+\dots+V_k$.
\end{proof}

\begin{theorem}\label{ample-compact-k-points-theorem}
Let $k$ be a positive integer and $C$ be a compact collection of $k$-element subsets of $M$.
Suppose $V_0$ is a linear subspace of $K(M)$ such that $V_0$ is ample 
for every $S\in C$.  Then $V_0$ contains a finite-dimensional subspace $V$
such that $V$ is ample for every $S\in C$.
\end{theorem}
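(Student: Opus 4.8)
The plan is to reduce this to the single‑point case, Theorem~\ref{ample-compact-point-theorem}, by replacing $M$ with the configuration space $\Omega^k M$ and the given $d$‑dimensional bundle $E\to M$ with its $k$‑fold external direct sum. Precisely, let $E^{(k)}\to\Omega^k M$ be the bundle whose fiber over $(x_1,\dots,x_k)$ is $E_{x_1}\oplus\dots\oplus E_{x_k}$; this is a $kd$‑dimensional vector bundle (near a tuple of distinct points, separate them by pairwise disjoint neighborhoods trivializing $E$, using that $M$ is Hausdorff), and $\Omega^k M\subset M^k$ is Hausdorff. Each section $f\in K(M)$ gives a continuous section $\widetilde f$ of $E^{(k)}$ by $\widetilde f(x_1,\dots,x_k)=(f(x_1),\dots,f(x_k))$; the assignment $f\mapsto\widetilde f$ is linear, and I write $\widetilde{V_0}=\{\widetilde f:f\in V_0\}\subset K(\Omega^k M;E^{(k)})$.

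First I would record the key dictionary: for a $k$‑element set $S=\{p_1,\dots,p_k\}$ and any ordering $(p_1,\dots,p_k)\in\Omega^k M$ of its elements, the fiber of $E^{(k)}$ at $(p_1,\dots,p_k)$ is $E_{p_1}\oplus\dots\oplus E_{p_k}$, so unwinding the definition of ampleness shows that $\widetilde{V_0}$ is ample for the point $(p_1,\dots,p_k)$ if and only if $V_0$ is ample for the set $S$. Set $\widehat C=\{(x_1,\dots,x_k)\in\Omega^k M:\{x_1,\dots,x_k\}\in C\}$; the hypothesis says exactly that $\widetilde{V_0}$ is ample for every point of $\widehat C$. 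Granting that $\widehat C$ is compact, I would apply Theorem~\ref{ample-compact-point-theorem} (which is valid for any finite‑dimensional vector bundle over any Hausdorff base, its proof using nothing else) to $E^{(k)}\to\Omega^k M$, the compact set $\widehat C$, and $\widetilde{V_0}$, obtaining a finite‑dimensional $\widetilde V\subset\widetilde{V_0}$ ample for every point of $\widehat C$. Finally, picking a preimage in $V_0$ of each vector in a basis of $\widetilde V$ and letting $V$ be their span gives a finite‑dimensional $V\subset V_0$ with $\widetilde V\subseteq\{\widetilde f:f\in V\}$; by the dictionary, $V$ is ample for every $S\in C$, as desired.

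The one point needing care — and the only obstacle, such as it is — is the compactness of $\widehat C\subset\Omega^k M$. This follows from the harmless observation that the symmetric group $S_k$ acts on $\Omega^k M$ as a finite group of homeomorphisms by permuting coordinates: the orbit map $q\colon\Omega^k M\to\Omega^k M/S_k$ is closed (for closed $A$, $q^{-1}(q(A))=\bigcup_{\sigma\in S_k}\sigma A$ is a finite union of closed sets) and has finite fibers, hence is proper, so it pulls compact sets back to compact sets. Under the natural topology on collections of $k$‑element subsets (the quotient topology on $\Omega^k M/S_k$), compactness of $C$ gives compactness of $\widehat C=q^{-1}(C)$; alternatively one may simply take compactness of $\widehat C$ as the meaning of the hypothesis. (The degenerate case $C=\emptyset$, which includes the case where $M$ has fewer than $k$ points, is handled by $V=\{0\}$.) A route that avoids even this bookkeeping is to imitate the proof of Theorem~\ref{ample-compact-point-theorem} directly: cover $C$ by the sets $U(V)=\{S\in C: V\text{ is ample for }S\}$ over finite‑dimensional $V\subset V_0$, observe each $U(V)$ is open because surjectivity of the evaluation map is an open condition in the points, and extract a finite subcover.
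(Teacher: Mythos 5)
Your proof is correct and follows essentially the same route as the paper's: both deduce the result from Theorem~\ref{ample-compact-point-theorem} by passing to the $k$-fold product with the external direct sum bundle, the map $f\mapsto\widetilde f$, and the set $\widehat C$ (and both note the alternative of simply repeating the covering argument). You are somewhat more careful than the paper about the compactness of $\widehat C$ and the degenerate cases, but the substance is identical.
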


Here the topology on $C$ is the obvious one. (It is the topology that comes from
identifying the space of $k$-element subsets of $M$ with a subset of  the quotient of $M^k$ by
the action of the permutation group $S_k$.)

Note that Theorem~\ref{ample-compact-point-theorem}
is the special case $k=1$ of Theorem~\ref{ample-compact-k-points-theorem}.

\begin{proof}
Theorem~\ref{ample-compact-k-points-theorem}
can be proved almost exactly as Theorem~\ref{ample-compact-point-theorem} was proved.
Alternatively, we can deduce Theorem~\ref{ample-compact-k-points-theorem} 
from Theorem~\ref{ample-compact-point-theorem} as follows.
For notational simplicity, assume that the vector bundle over $M$ is trivial,
so $K(M)=C^0(M,\RR^d)$.

Let $\widehat C$ be the set of $(p_1,\dots,p_k)\in M^k$ such that $\{p_1,\dots, p_k\}\in C$.

If $f\in C^0(M, \RR^d)$, let $\widehat f\in C^0(M^k, (\RR^d)^k)$ be given by
\[
    \{ \widehat f(p_1,\dots,p_k) = (f(p_1), f(p_2), \dots, f(p_k)) \}.
\]
Let 
\[
  \widehat V = \{ \widehat f: f\in V\}.
\]

Note that if $S=\{p_1, \dots, p_k\}$ is a $k$-element subset of $M$, then $f$ is ample for $S$
if and only if $\widehat f$ is ample at the point $(p_1,\dots,p_k)$.

Now apply Theorem~\ref{ample-compact-point-theorem} to the linear space $\widehat V$
 and the set $\widehat C$.
\end{proof}

\section{PDE}

In this section, we assume that $M$ is a smooth, closed, connected Riemannian manifold.
We consider some fixed smooth vector bundle over $M$ endowed with a smooth inner product on the
fibers.
Let $\Vv$ be the space of all smooth sections of the vector bundle.
If $D$ is a subdomain of $M$, we let $\Vv(D)$ be the space of smooth sections whose domain is $D$.
In the notation of Section~\ref{ample-section},
\[
   \Vv(D) = \{f\in K(D): \text{$f$ is smooth} \}.
\]

Let $J:\Vv\to\Vv$ be a second order, linear elliptic differential operator.
We assume that $J$ has the unique continuation property:

\begin{definition}
We say that $J$ has the {\bf unique continuation property} provided the following holds.
If $f\in \Vv$, if $Jf=0$ on a connected open set $U\subset M$, and if $Jf$ vanishes to infinite order at a point $p\in U$,
then $f$ vanishes everywhere in $U$.
\end{definition}

For the $J$ that arise in this paper, $J$ is the Laplacian plus lower order terms.  
In that case, it is well-known that $J$ has the unique continuation property. 
For example, it may be proved using Almgren's frequency function (as in~\cite{garofalo-lin}),
or by a theorem of Calderon~\cite{calderon}*{Theorem~11}.

In~\cite{lax}*{p.~760}, Peter Lax proved that the unique continuation property is equivalent to what he called the Runge Property:

\begin{definition}
We say that $J$ has the {\bf Runge Property} provided the following holds.
If $D_1\subset D_2$ are smooth closed domains in $M$ with nonempty boundaries such that $D_1$ is in the interior
of $D_2$ and such that $D_2 \setminus D_1$ is connected, then 
\[
    \{f|_{D_1}: \text{$f\in \Vv(D_2)$ and $Jf=0$} \}
\]
is dense (in the $L^2$ norm) in 
\[
  \{f \in \Vv(D_1): Jf=0\}.
\]
\end{definition}

Of course this $L^2$ denseness implies by elliptic regularity that if $C$ is a compact subset of the interior of $D_1$,
then the space
\begin{equation}\label{finite-dim-space-2}
   \{f|_{C}: \text{$f\in \Vv(D_2)$ and $Jf=0$} \}
\end{equation}
is dense with respect to smooth convergence in
\begin{equation}\label{finite-dim-space-1}
  \{f|_{C}: \text{$f\in \Vv(D_1)$ and $Jf=0$} \}.
\end{equation}
In particular, 
\begin{equation}\label{sameness}
  \text{If $C\subset D_1\setminus \partial D_1$ is finite, then the spaces~\eqref{finite-dim-space-2}
  and~\eqref{finite-dim-space-1} are the same}.
\end{equation}

\begin{theorem}[Runge-Type Theorem]\label{runge-type-theorem}
Let $U$ be a nonempty open subset of $M$ (e.g., a small open ball around a point)
and let 
\[
   V = \{ f\in \Vv: \text{$Jf$ is supported in $U$} \}.
\]
Suppose that $S$ is a finite subset of $M$ and that $\phi\in \Vv$.
Then there is an $f\in V$ such that $f(p)=\phi(p)$ and $Df(p)=D\phi(p)$ for each $p\in S$.
\end{theorem}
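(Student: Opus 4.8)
The plan is to prove the equivalent (and slightly more natural) assertion that the linear \emph{jet--evaluation map}
\[
   E:V\longrightarrow \bigoplus_{p\in S}\JJ^1_p,\qquad E(f)=\bigl(f(p),\,Df(p)\bigr)_{p\in S},
\]
where $\JJ^1_p$ denotes the finite--dimensional space of $1$-jets at $p$ of smooth sections of the bundle, is \emph{surjective}; granting this and recalling that $\phi\in\Vv$, one simply chooses $f\in V$ with $E(f)=E(\phi)$, which is exactly the conclusion. Two harmless reductions come first. Replacing $U$ by a smaller nonempty open ball whose closure is disjoint from $S$ only shrinks $V$, hence only strengthens the statement, so we may assume $U$ is a ball with $\overline U\cap S=\emptyset$; fix a yet smaller ball $U'$ with $\overline{U'}\subset U$. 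We may also assume $\dim M\ge 2$: when $M$ is a circle, a member of $V$ restricted to $S^1\setminus\overline{U'}$ solves the relevant second--order linear ODE, and one produces $f$ by solving the initial value problem with the prescribed $1$-jet and then extending smoothly across $\overline{U'}$ (in the configurations used in this paper each arc of $M\setminus U$ carries at most one point of $S$, so the ODE is not over--determined).

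The first real ingredient is \textbf{local solvability of the $1$-jet}: for each $p\in S$ and each $\xi\in\JJ^1_p$ there is a small closed ball $B_p$ centered at $p$, lying in $M\setminus\overline U$ and disjoint from the other $B_q$'s, and a section $g\in\Vv(B_p)$ with $Jg=0$ in $B_p$ and $j^1_pg=\xi$. I would produce $g$ by solving the Dirichlet problem for $Jg=0$ on a small ball $B_r(p)$ with $g$ equal on $\partial B_r(p)$ to the affine section (in a local trivialization and normal coordinates at $p$) whose $1$-jet at $p$ is $\xi$ --- uniquely solvable for $r$ small, since then $J$ has trivial Dirichlet kernel there --- and then checking, via the $L^\infty$ bound for the Dirichlet problem, interior gradient estimates, and scaling, that the resulting solution has $1$-jet $\xi+O(r\,|\xi|)$ at $p$. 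Hence the linear map $\xi\mapsto j^1_pg$ is $I+A_r$ on $\JJ^1_p$ with $\|A_r\|=O(r)$, so it is onto once $r$ is fixed small enough; shrinking $r$ further makes the balls $B_p$ mutually disjoint and contained in $M\setminus\overline U$. (Equivalently, one may solve $Jg=0$ with prescribed $1$-jet at $p$ by a convergent iteration viewing $J$, at small scales, as a perturbation of its constant--coefficient principal part.)

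The second ingredient is \textbf{Runge globalization}. Put $D_1:=\bigsqcup_{p\in S}B_p$ and $D_2:=M\setminus U'$; these are smooth closed domains, $D_1$ lies in the interior of $D_2$, and --- this is where $\dim M\ge 2$ enters --- $D_2\setminus D_1$ is connected. Since $J$ has the unique continuation property it has the Runge Property (by the theorem of Lax cited above), so the restrictions to $D_1$ of solutions of $Jf=0$ on $D_2$ are $L^2$--dense in the solutions of $Jf=0$ on $D_1$, hence, by interior elliptic regularity, $C^1$--dense near $S$; therefore the subspace of $\bigoplus_p\JJ^1_p$ realized by jets at $S$ of $D_2$--solutions is dense in, and so (finite dimensionality) equal to, the one realized by $D_1$--solutions. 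But a solution on $D_1=\bigsqcup_pB_p$ is just an independent choice of solution on each $B_p$, so by local solvability the latter subspace is all of $\bigoplus_p\JJ^1_p$. Thus every jet datum at $S$ is realized by some $f_0\in\Vv(D_2)$ with $Jf_0=0$. Extending $f_0$ smoothly and arbitrarily across $U'$ to a section $f\in\Vv$ gives $Jf=Jf_0=0$ on $M\setminus\overline{U'}$, so $Jf$ is supported in $\overline{U'}\subset U$, i.e.\ $f\in V$, while $j^1_pf=j^1_pf_0$ for all $p\in S$. Taking $f_0$ to realize $E(\phi)$ finishes the proof.

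I expect the main obstacle to be local solvability of the $1$-jet: this is the only place where the ellipticity of $J$ is genuinely used, and it rests on the scaling estimate above. Everything else is soft --- a single invocation of the Runge Property together with finite--dimensionality --- the one thing to be careful about being the hypothesis ``$D_2\setminus D_1$ connected'' required to apply the Runge Property, which is automatic for $\dim M\ge 2$ but must be handled by hand when $M$ is a circle (for $1$-jets prescribed at two or more points of a single complementary arc the stated conclusion in fact fails, which is why that case needs the separate, elementary treatment indicated above).
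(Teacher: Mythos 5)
Your argument is essentially the paper's own proof: the same jet--evaluation map, the same local Dirichlet-problem-plus-rescaling lemma to solve the $1$-jet problem on small balls around the points of $S$, the same Runge/Lax globalization from $D_1=\bigsqcup_p B_p$ to $D_2=M$ minus a small ball in $U$ with finite-dimensionality upgrading density to surjectivity, and the same arbitrary smooth extension across that ball to land in $V$. Your extra attention to the hypothesis that $D_2\setminus D_1$ be connected — and the observation that the statement as literally written fails for $M=S^1$ with $|S|\ge 2$ — is a legitimate point the paper silently elides, though your parenthetical claim that each complementary arc carries at most one point of $S$ in the paper's applications is not something you have verified and is not needed for the $\dim M\ge 2$ case you actually prove.
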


In other words, there exists an $f\in V$ with any prescribed values and prescribed first derivatives
at the points of $S$.

\begin{proof}
Let us assume for notational convenience 
that the bundle is trivial, and thus that $\Vv=C^\infty(M,\RR^d)$ for some $d$.
Let $A$ be the finite dimensional vector space
\[
  A = \oplus_{p\in S} ( \Tan(M,p) \oplus L(\Tan(M,p),\RR^d)).
\]
If $W$ is an open subset of $M$ containing $S$ and if $f\in C^\infty(W,\RR^d)$, we
define $\lambda(f)\in A$ by
\[
   \lambda(f) = (f(p), Df(p))_{p\in S}.
\] 
Thus the assertion of the Lemma is that $\lambda$ maps $V$ surjectively to $A$.

Let $q$ be a point  in $U\setminus S$.
Choose $r>0$ small enough that the closed geodesic balls of radius $r$ around the points in $S\cup\{q\}$ are
disjoint and diffeomorphic to closed balls in $\RR^m$,
  and so that $\overline{\BB(q,r)}$ is contained in $U$.

By standard PDE (see Lemma~\ref{local-lemma} below), we can choose $r>0$ sufficiently small that
for each $p\in S$, there exists an $f\in C^\infty(\BB(p,r), \RR^d)$ with $Jf=0$
having any prescribed values of $f(p)$ and $Df(p)$.   Thus if we let $D_1=\cup_{p\in S}\BB(p,r)$,
then 
\begin{equation}\label{D1-surjection}
 \text{$\lambda: \{f\in C^\infty(D_1,\RR^d): Jf = 0\} \to A$ is a surjective linear map}.
\end{equation}
Let $D_2 = M\setminus \BB(q,r/4)$.   By the Runge Property, 
\[ 
   \{ f|D_1: f\in C^\infty(D_2,\RR^d), \, Jf=0\}
\]
is dense in 
\[
  \{ f \in C^\infty(D_1,\RR^d): Jf = 0\}.
\]
Thus since $A$ is finite dimensional, it follows from~\eqref{D1-surjection} that
\begin{equation}\label{D2-surjection}
 \text{$\lambda: \{f\in C^\infty(D_2,\RR^d): Jf = 0\} \to A$ is a surjective linear map}.
\end{equation}
Let $\alpha \in A$.  Then there exists an $f\in C^\infty(D_2,\RR^d)$ such that $Jf=0$
and such that $\lambda(f)=\alpha$.
Let $\tilde f\in C^\infty(M,\RR^d)$ be a smooth function such that $\tilde f - f$ vanishes
outside of $\BB(q,r/2)$.  Then $\tilde f\in V$ and $\lambda(\tilde f) = \lambda(f)=\alpha$.
\end{proof}

\begin{theorem}\label{PDE-theorem}
Let $U$ be a nonempty open subset of $M$ (e.g., a small open ball around a point)
and let 
\[
   V = \{ f\in \Vv: \text{$Jf$ is supported in $U$} \}.
\]
Then
\begin{enumerate}[\upshape(1)]
\item\label{ample-assertion-1} For every finite subset $S$ of $M$, $V$ is ample for $S$.
\item\label{ample-assertion-2} If $k$ is positive integer and $C$ is a compact collection of $k$-element subsets of $M$,
  then there is a finite-dimensional subspace $\widehat{V}$ of $V$ such that $\widehat{V}$ is
  ample for every $S\in C$.
\item\label{ample-assertion-3}There is a finite-dimensional subspace $\widetilde V$ of $V$ 
such that $\widetilde{V}$ is 
   ample for every point in $M$.
\end{enumerate}
\end{theorem}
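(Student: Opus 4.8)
The plan is to deduce all three assertions directly from the Runge-Type Theorem (Theorem~\ref{runge-type-theorem}) together with the abstract ample-space results of \S\ref{ample-section}; the analytic content has already been extracted in Theorem~\ref{runge-type-theorem}, so what remains is essentially bookkeeping.

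First I would prove~\eqref{ample-assertion-1}. Let $S=\{p_1,\dots,p_k\}$ be a finite subset of $M$, and for each $i$ let $v_i$ be a prescribed vector in the fiber over $p_i$. Using bump functions, choose any smooth section $\phi\in\Vv$ with $\phi(p_i)=v_i$ for every $i$. By Theorem~\ref{runge-type-theorem} there is an $f\in V$ with $f(p_i)=\phi(p_i)=v_i$ for every $i$ (we do not even need the agreement of first derivatives that Theorem~\ref{runge-type-theorem} additionally provides). Hence $V|_S=K(S)$, i.e., $V$ is ample for $S$. Assertions~\eqref{ample-assertion-2} and~\eqref{ample-assertion-3} then follow by feeding~\eqref{ample-assertion-1} into the compactness results of \S\ref{ample-section}: since $V\subset K(M)$ and, by~\eqref{ample-assertion-1}, $V$ is ample for every finite subset of $M$, Theorem~\ref{ample-compact-k-points-theorem} applied with $V_0=V$ and the given $C$ yields a finite-dimensional $\widehat V\subset V$ ample for every $S\in C$, proving~\eqref{ample-assertion-2}; and Theorem~\ref{ample-compact-point-theorem} applied with $V_0=V$ and $C=M$ (recall $M$ is compact) yields a finite-dimensional $\widetilde V\subset V$ ample for every point of $M$, proving~\eqref{ample-assertion-3}. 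Equivalently, \eqref{ample-assertion-3} is the case $k=1$, $C=M$ of~\eqref{ample-assertion-2}.

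There is no substantive obstacle once Theorem~\ref{runge-type-theorem} is in hand. The only step needing a moment's thought is the reduction of~\eqref{ample-assertion-1} to Theorem~\ref{runge-type-theorem}, namely producing a fixed smooth section $\phi$ interpolating the prescribed fiber values before invoking the Runge property — but this is a routine partition-of-unity construction. (If one wanted this statement without the standing connectedness hypothesis on $M$, one would simply apply the argument componentwise, using that $U$ meets each component.)
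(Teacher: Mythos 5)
Your proposal is correct and follows essentially the same route as the paper: assertion (1) is read off from the Runge-Type Theorem (your interpolating section $\phi$ is exactly the routine step the paper leaves implicit), assertion (2) comes from Theorem~\ref{ample-compact-k-points-theorem} applied with $V_0=V$, and assertion (3) is the case $k=1$, $C=M$. Nothing further is needed.
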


\begin{proof}
Assertion~\eqref{ample-assertion-1} follows from the Runge-Type Theorem~\ref{runge-type-theorem}.
(In fact, the Runge-Type Theorem is much stronger: it asserts that we can prescribe the
values and the first derivatives of $f\in V$ at the points in $S$, whereas
 Assertion~\eqref{ample-assertion-1} only asserts
that we can prescribe the values.)
Assertion~\eqref{ample-assertion-2} follows 
from Assertion~\eqref{ample-assertion-1} by Theorem~\ref{ample-compact-k-points-theorem}.
Assertion~\eqref{ample-assertion-3} is the special case of Assertion~\eqref{ample-assertion-2}
 when $k=1$ and $C=M$.
\end{proof}

\begin{lemma}\label{local-lemma}
Suppose that 
$J:C^\infty(\Omega,\RR^d)\to C^\infty(\Omega,\RR^d)$ is a 2nd-order linear elliptic operator,
where $\Omega\subset \RR^m$ is an open set containing the origin.  For all sufficiently small $r>0$,
the following holds.
For every vector $v\in \RR^d$ and for every linear map $L:\RR^m\to\RR^d$, there is a solution $f$ of $Jf=0$ on
 $\BB(0,r)$ such that $f(0)=v$ and such that $Df(0)=L$.
\end{lemma}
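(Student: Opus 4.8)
The plan is to reduce the statement to a finite-dimensional linear-algebra fact about the symbol of $J$, and then to produce solutions by a fixed-point / series construction on a small ball. First I would normalize: after an affine change of coordinates we may assume $J$ has principal part $\sum_{ij} a^{ij}(x)\,\partial_i\partial_j$ with $(a^{ij}(0)) = I$, so that $J = \Delta + B(x)\cdot D + C(x)$ modulo a term that is $O(|x|)$ times second derivatives. The key point is that the map sending a solution $f$ of $Jf=0$ on $\BB(0,r)$ to its $1$-jet $(f(0),Df(0))\in \RR^d \times L(\RR^m,\RR^d)$ is linear, so it suffices to show it is surjective; equivalently, the space of solutions has $1$-jet space of dimension at least $d(m+1)$.

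To see this, I would construct, for each prescribed datum $(v,L)$, an \emph{approximate} solution by a power-series ansatz and then correct it. Concretely, the affine function $f_0(x) = v + Lx$ satisfies $Jf_0 = C(x)v + B(x)L x^{\mathrm{t}}\cdots$, which is $O(1)$ in $C^0(\BB(0,r))$ but whose $C^0$ norm one does not shrink; so instead I would build a polynomial $f_N$ of degree $N$ solving $Jf_N = 0$ to order $N-1$ at the origin (this is the standard formal-power-series solution: choosing the degree-$k$ Taylor coefficients of $f$ recursively so that the Taylor expansion of $Jf$ vanishes through order $N-2$, using that $\Delta$ is surjective from degree-$k$ homogeneous polynomials onto degree-$(k-2)$ homogeneous polynomials, hence invertible modulo harmonics — harmonics being exactly the freedom we can keep spending or setting to zero). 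Then $Jf_N = O(|x|^{N-1})$, so on $\BB(0,r)$ we have $\|Jf_N\|_{C^0} \le c\, r^{N-1}$. Now solve the Dirichlet problem $J w = -Jf_N$ on $\BB(0,r)$ with $w = 0$ on $\partial\BB(0,r)$; by Schauder (or $L^p$) estimates, for $r$ small enough that $J$ is invertible on $\BB(0,r)$ — which holds for small $r$ because the lower-order coefficients are bounded and the operator is a small perturbation of $\Delta$, for which the Dirichlet problem on a small ball has a uniformly bounded inverse after rescaling — we get $\|w\|_{C^1(\BB(0,r/2))} \le C(r)\, r^{N-1}$. Taking $N$ large (say $N = m+3$) and $r$ small makes the $1$-jet of $w$ at the origin as small as we like; then $f := f_N + w$ solves $Jf = 0$ on $\BB(0,r)$ and has $1$-jet $(v,L) + (\text{error of size} \le \delta(r)\|(v,L)\|)$ at the origin.

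Finally, the $1$-jet map $\Lambda: \{f : Jf = 0 \text{ on } \BB(0,r)\} \to \RR^d\times L(\RR^m,\RR^d)$ is, by the previous paragraph, within distance $\delta(r) < 1$ (in operator norm, after identifying both sides with $\RR^{d(m+1)}$ via the $f_N$-construction) of the identity on a spanning set; hence for $r$ small it is surjective, which is exactly the assertion. The main obstacle I anticipate is not any single step but getting the quantifiers on $r$ to line up: the radius must be small enough simultaneously for (i) the coordinate normalization and the ellipticity/perturbation bound, (ii) invertibility of the Dirichlet problem with a \emph{scale-invariant} constant, and (iii) the estimate $\delta(r)\to 0$. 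This is handled cleanly by rescaling $\BB(0,r)$ to $\BB(0,1)$: under $x\mapsto rx$ the operator becomes $\Delta + r B(rx)\cdot D + r^2 C(rx) + (\text{$O(r)$ second-order perturbation})$, a perturbation of $\Delta$ of size $O(r)$ on the unit ball with uniformly bounded Schauder constants, so everything is uniform and the small-$r$ limit is transparent.
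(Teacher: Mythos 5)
Your proof is correct, and it reaches the same conclusion by a genuinely different decomposition than the paper's. The paper parametrizes candidate solutions by \emph{affine boundary data}: it solves $Jf_{r,\phi}=0$ on $\BB(0,r)$ with $f_{r,\phi}=\phi$ on $\partial\BB(0,r)$, rescales to the unit ball, and notes that the rescaled solutions converge smoothly to the solution of the frozen constant-coefficient homogeneous problem, which is $\phi$ itself (affine functions are killed by a homogeneous second-order operator); hence $\phi\mapsto(f_{r,\phi}(0),Df_{r,\phi}(0))$ converges to the bijection $\phi\mapsto(\phi(0),D\phi(0))$ and is bijective for small $r$. You instead parametrize by the target $1$-jet: a formal power-series recursion produces a polynomial $f_N$ with the prescribed $1$-jet and $Jf_N=O(|x|^{N-1})$, and an interior Dirichlet correction $w$ removes the error while contributing a $1$-jet of size $o(1)\,\|(v,L)\|$ as $r\to0$. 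Both arguments hinge on the same rescaling of $\BB(0,r)$ to the unit ball to get uniform solvability, and both end by perturbing the identity on $\RR^d\times L(\RR^m,\RR^d)$. Two observations. First, your power-series step is dispensable: taking $N=1$, i.e.\ $f_1(x)=v+Lx$, already works, since after rescaling the right-hand side $-r^2(Jf_1)(rx)$ is $O(r^2)$ on $\BB(0,1)$, giving $|w(0)|\le Cr^2$ and $|Dw(0)|\le Cr$; with that simplification your proof is essentially the paper's in different packaging (the paper's $f_{r,\phi}$ is exactly $\phi$ plus such a correction). Second, what the extra machinery buys is the generalization to higher-order jets subject to the compatibility $J_0\phi=0$, which is precisely the content of the remark following the lemma. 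Minor tidying: the quoted bound $\|w\|_{C^1(\BB(0,r/2))}\le C(r)r^{N-1}$ conceals the factor $r^{-1}$ that the gradient acquires under rescaling (the correct conclusion is $|Dw(0)|\le Cr^{N-\alpha}$, still $\to0$), and the normalization $(a^{ij}(0))=I$ by a linear change of variables is legitimate here only because the principal symbol is scalar (Laplacian times the identity), as it is for the operators in this paper, not for an arbitrary elliptic system.
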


\begin{proof}
Choose $R>0$ so that for every $r\le R$  and for every affine function $\phi: \RR^m\to \RR^d$, 
there is a unique solution $f_{r,\phi}:\BB(0,r)\to \RR^d$
to the boundary value problem
\begin{align*}
  Jf_{r,\phi} &= 0,  \\
  f_{r,\phi} |\partial \BB(0,r)  &= \phi | \partial \BB(0,r).
\end{align*}
Let $\tilde{f}_{r,\phi}$ be the rescaled function
\[
\tilde{f}_{r,\phi}:   x\in \BB(0,1)\mapsto f_{r,\phi}(0) + r^{-1} (f_{r,\phi}(rx) - f_{r,\phi}(0)).
\]
Note that 
\begin{equation}\label{taylor}
 \text{$\tilde{f}_{r,\phi}(0)=f_{r,\phi}(0)$ and $D\tilde{f}_{r,\phi}(0)=Df_{r,\phi}(0)$},
\end{equation}
and that $\tilde{f}_{r,\phi}=\phi$ on $\partial \BB(0,1)$.

As $r\to 0$, $\tilde{f}_{r,\phi}$ converges
converge smoothly to the solution $\tilde{f}_\phi:\BB(0,1)\to\RR^m$ of 
\begin{align*}
&J_0\tilde{f}_\phi= 0, \\
&\tilde{f}_\phi|\partial \BB(0,1) = \phi|\partial\BB(0,1).
\end{align*}
where $J_0$ is a constant-coefficient, homogeneous, 2nd order linear elliptic operator.
Thus $\tilde{f}_\phi = \phi$.

Let $A$ be the space of affine maps from $\RR^m$ to $\RR^d$.
Since the linear map 
\begin{equation}\label{linear-maps}
   \phi \in A  \mapsto (\tilde{f}_{r,\phi}(0), D\tilde{f}_{r,\phi}(0)) \in \RR^d \times L(\RR^m,\RR^d)
\end{equation}
converges as $r\to 0$ to the linear bijection
\[
   \phi \in A \mapsto (\phi(0), D\phi(0)) \in \RR^d\times L(\RR^m,\RR^d),
\]
it follows that the map~\eqref{linear-maps} must be a bijection
for all sufficiently small $r$.  Thus we are done by~\eqref{taylor}.
\end{proof}

\begin{remark} (Not needed in this paper.)
A slight modification of the proof of Lemma~\ref{local-lemma} shows
that for each positive integer $k$, the following holds for all sufficiently small $r>0$.
If $\phi:\RR^m\to \RR^d$ is a polynomial of degree $k$ such that $J_0\phi=0$, then
there is an $f:\BB^m(0,r)\to \RR^d$ such that $Jf=0$ and such that $\phi$ is the degree~$k$
Taylor polynomial for $f$ at $0$.
\end{remark}

\section{Prescribed Mean Curvature Hypersurfaces}\label{prescribed-section}

The theorems in this paper easily extend to hypersurfaces with constant mean curvature or,
more generally, with prescribed mean curvature.
In those settings, one works with oriented surfaces.
 We say that two immersions $F_i:M_i\to N$
of smooth, oriented manifolds $M_1$ and $M_2$ are {\bf equivalent} if there
is an orientation-preserving diffeomorphism $u:M_1\to M_2$ such that $F_1=F_2\circ u$.
We let $[F]$ denote the equivalence class of $F$.

Now suppose that $N$ is an oriented smooth $(m+1)$-dimensional manifold with Riemmanian
metric $g$, and that $h$ is a smooth function on $N$.
Suppose that $F:M\to N$ is an immersion of an oriented $m$-manifold into $N$.
We say that $F$ has {\bf prescribed mean curvature $h$ with respect to the metric $g$} 
provided the mean curvature vector
at $x\in M$ is given by $-h(F(x)) \nu_F(x)$ where $\nu_F(x)$ is the unit normal to $\Tan(F,x)$
corresponding to the orientations of $N$ and of $M$.

 If we linearize the prescribed mean curvature equation about a critical point, we get the
$(g,h)$-Jacobi operator $J$.   As in the minimal case, if we restrict $J$ to the normal bundle,
 it is a self-adjoint, second-order,
linear elliptic operator whose leading term is the Laplacian.
As in the minimal case, solutions of $Ju=0$ are called $(g,h)$-Jacobi fields, or just Jacobi fields
if the $g$ and $h$ are understood.

\begin{theorem}\label{prescribed-banach-theorem}
Let $M$ be a smooth, closed, oriented $m$-dimensional manifold.
Let $N$ be a smooth, oriented, $(m+1)$-dimensional manifold with a smooth Riemannian metric $g$.
Let $h$ be a smooth function on $N$.
Let $q$ and $j$ be positive integers with $q>j\ge 2$ and let $\alpha\in (0,1)$.
Let $\MM^h(q,j,\alpha)$ be the set of pairs $(\gamma,[F])$ where $\gamma$ is a $C^q$ function
on $N$ and $F:M \to N$ is a simple, $C^{j,\alpha}$ immersion that has prescribed mean curvature $h$
with respect to the metric $e^{\gamma}g$.
Then $\MM^h(q,j,\alpha)$ is a separable, $C^{q-j}$ Banach manifold and the map
\begin{align*}
&\Pi: \MM^h(q,j,\alpha)\to C^q(N), \\
&\Pi(\gamma,[F]) = \gamma
\end{align*}
is a $C^{q-j}$ Fredholm map of Fredholm index $0$.
The kernel of $D\Pi(\gamma,[F])$ is naturally isomorphic to the
space of normal $(e^{\gamma}g,h)$-Jacobi fields to $[F]$.
\end{theorem}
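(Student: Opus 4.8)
The plan is to realize $\MM^h(q,j,\alpha)$ as the zero set of a smooth section of a Banach bundle and apply the implicit function theorem together with standard Fredholm theory, exactly in the spirit of the bumpy-metrics construction. First I would set up the ``universal'' domain: fix the smooth closed oriented manifold $M$, and for each equivalence class $[F]$ near a base immersion $F_0$, use the normal exponential map to identify nearby immersions with small $C^{j,\alpha}$ normal vectorfields $u\in \Vv_{F_0}$ (the unparametrized surface being represented by $u$ modulo the action of $\operatorname{Diff}^+(M)$; working with normal graphs kills this gauge freedom, so a neighborhood of $[F_0]$ in the space of equivalence classes is modeled on an open set in the Banach space $C^{j,\alpha}(\Vv_{F_0})$). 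Then I would define
\[
   \Psi(\gamma, u) = H^h(\gamma, u),
\]
the prescribed-mean-curvature operator: the normal component of $\mathbf H_{e^\gamma g}(\graph_u) + h(\cdot)\,\nu$, which is a quasilinear second-order elliptic operator taking $C^q\times C^{j,\alpha}$ into (sections of the appropriate pullback bundle of class) $C^{j-2,\alpha}$. The set $\MM^h(q,j,\alpha)$ is locally $\Psi^{-1}(0)$.

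The next step is to show $\Psi$ is a $C^{q-j}$ map and that its differential in the $u$-variable is Fredholm. Smoothness of order $q-j$ is the routine point: $H^h$ depends on $\gamma$ through $\gamma$ and its first derivatives (and on the metric coefficients, which lose one derivative from $\gamma$), while it depends on $u$ up to second order; counting derivatives, composition and the usual Banach-space calculus for Nemytskii-type operators on Hölder spaces give $C^{q-j}$ regularity, and here one needs $q>j$ so that $q-j\ge 1$ and $j\ge 2$ so that $C^{j-2,\alpha}$ makes sense. The partial derivative $D_2\Psi(\gamma,u)$ at a solution is, by definition, the $(e^\gamma g,h)$-Jacobi operator $J$ acting as a map $C^{j,\alpha}(\Vv)\to C^{j-2,\alpha}(\Vv)$; since $J$ is a self-adjoint second-order elliptic operator (Laplacian plus lower-order terms) on the closed manifold $M$, it is Fredholm of index $0$ between these Hölder spaces, with kernel the smooth (by elliptic regularity) normal Jacobi fields and cokernel isomorphic to the kernel via self-adjointness. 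In particular $D\Psi(\gamma,u) = (D_1\Psi, D_2\Psi)$ is surjective: the full differential is already surjective because $D_2\Psi = J$ has finite-dimensional cokernel and the conformal variations hit the missing directions — indeed formula~\eqref{G-formula} shows $D_1\Psi(\gamma,u)\cdot\dot\gamma = -\tfrac{m}2(\nabla \dot\gamma)^\perp + (\text{term from }h)$, and since we only need to hit a finite-dimensional cokernel of $J$, choosing $\dot\gamma$ supported near regular points of the embedding suffices (this is precisely the mechanism of Proposition~\ref{G-image}). Hence $0$ is a regular value of $\Psi$, the implicit function theorem makes $\MM^h(q,j,\alpha)$ a $C^{q-j}$ Banach submanifold, and its tangent space at $(\gamma,[F])$ is $\ker D\Psi$.

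Then I would identify $\Pi$ with the restriction of the projection $(\gamma,u)\mapsto\gamma$. On the model $\Psi^{-1}(0)$, $D\Pi(\gamma,[F])$ is the restriction of the coordinate projection to $\ker D\Psi$; its kernel is $\{(0,\dot u): J\dot u = 0\}$, i.e.\ the space of normal $(e^\gamma g,h)$-Jacobi fields, which is finite-dimensional by ellipticity. For the cokernel: given $\dot\gamma\in C^q(N)$, solvability of $D\Psi(\gamma,u)(\dot\gamma,\dot u)=0$ for $\dot u$ is obstructed exactly by the cokernel of $J$ restricted to $\operatorname{image}(D_1\Psi\cdot\dot\gamma)$; a short linear-algebra argument (the snake-lemma/cokernel computation standard in this setting) shows $\operatorname{coker}D\Pi$ is isomorphic to $\operatorname{coker}D_2\Psi = \operatorname{coker}J$, which by self-adjointness has the same finite dimension as $\ker J = \ker D\Pi$. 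Therefore $\Pi$ is Fredholm of index $0$. Separability follows as in Remark~\ref{second-countable}: there are countably many diffeomorphism types of closed oriented $m$-manifolds, and for each the relevant product of separable Hölder spaces $C^q(N)\times C^{j,\alpha}(M,N)$ is separable, so $\MM^h(q,j,\alpha)$ has a countable dense set. The main obstacle is the gauge/quotient bookkeeping — verifying carefully that the normal-graph representation gives a genuine manifold chart on the space of \emph{equivalence classes} $[F]$ (and that simplicity is an open condition so it doesn't affect the manifold structure) — together with the purely mechanical but delicate derivative-counting that pins down the exact $C^{q-j}$ regularity; the Fredholm index computation itself is then formal.
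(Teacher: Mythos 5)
Your outline is essentially correct, but it is worth noting that the paper does not prove this theorem from scratch: it simply cites \cite{white-space}*{\S7}, where the constant-$h$ case is established, and observes that the only change needed for variable $h$ is in the functional, replacing the enclosed-volume term $h\int u^{\#}\omega_\gamma$ by $\int u^{\#}(h\omega_\gamma)$. That cited argument is \emph{variational}: prescribed-mean-curvature immersions are realized as critical points of a parametrized functional, and the Banach-manifold structure, the Fredholmness and index $0$ of $\Pi$, and the identification of $\ker D\Pi$ with the Jacobi fields all come from the general structure theorem of \cite{white-space} for such parametrized elliptic functionals. You instead work directly with the quasilinear PMC operator $\Psi$ and the implicit function theorem; this is a legitimate alternative, but it forces you to verify by hand two things the variational framework delivers automatically. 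First, self-adjointness of $J$ (hence $\dim\operatorname{coker}J=\dim\ker J$) is immediate when $J$ is the second variation of a functional, whereas in your setup it is an assertion to be checked. Second, and more importantly, your surjectivity claim for $D\Psi$ (equivalently, that $P\circ G$ maps onto $\operatorname{coker}J\cong\ker J$) genuinely requires \emph{simplicity} together with unique continuation: one needs to know that a nonzero Jacobi field does not vanish identically on the open set $U$ where $F$ is an embedding, so that a conformal factor supported in $W=F(U)$ pairs nontrivially with it. You gesture at this via Proposition~\ref{G-image} but do not invoke unique continuation, and without it the index-$0$ conclusion would fail (this is exactly where non-simple immersions break the theorem). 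Likewise, your remark that the normal-graph model is a genuine chart on equivalence classes deserves the one-line justification that a simple immersion admits no nontrivial self-equivalence (again by unique continuation), so no finite-group quotient intervenes. With those two points filled in, your argument is a sound reconstruction of the content of the cited reference.
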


In case $h$ is constant, this is proved in \cite{white-space}*{\S7}.
For functions $h$, the same proof works, except that in the first equation on
page~198, one replaces 
\[
     h \int u^{\#}\omega_{\gamma}
\]
by
\[
    \int u^{\#}(h\omega_{\gamma}),
\]
and similarly for the other formulas on that page.

\begin{corollary}\label{prescribed-bumpy-corollary}
In Theorem~\ref{prescribed-banach-theorem}, the set of critical values of $\Pi$ is meager in $C^q(N)$.
\end{corollary}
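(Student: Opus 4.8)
The plan is to obtain Corollary~\ref{prescribed-bumpy-corollary} as an immediate consequence of Theorem~\ref{prescribed-banach-theorem} together with the Sard--Smale theorem, exactly as the Bumpy Metrics Theorem~\ref{bumpy-theorem} is obtained in \cite{white-bumpy}. By Theorem~\ref{prescribed-banach-theorem}, the map
\[
   \Pi:\MM^h(q,j,\alpha)\to C^q(N)
\]
is a $C^{q-j}$ Fredholm map of Fredholm index $0$, and its domain is a separable Banach manifold. Since $q>j$ by hypothesis, $q-j\ge 1$. The Sard--Smale theorem asserts that if $f$ is a $C^r$ Fredholm map of index $0$ between separable Banach manifolds with $r\ge 1$, then the set of regular values of $f$ is residual, equivalently the set of critical values of $f$ is meager. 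Applying this to $\Pi$ yields the corollary.

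If one prefers to see where separability enters, the reduction goes as follows. Being separable, $\MM^h(q,j,\alpha)$ is second countable, hence admits a countable atlas $\{(\UU_i,\phi_i)\}$. Let $\Sigma\subset\MM^h(q,j,\alpha)$ be the set of critical points of $\Pi$. Then the set of critical values is $\Pi(\Sigma)=\bigcup_i\Pi(\Sigma\cap\UU_i)$, a countable union. For each $i$, the map $\Pi\circ\phi_i^{-1}$ is a $C^{q-j}$ Fredholm map of index $0$ from an open subset of a Banach space into $C^q(N)$, so the local Sard--Smale theorem shows $\Pi(\Sigma\cap\UU_i)$ is meager; a countable union of meager sets is meager, and $C^q(N)$, being a Banach space, is a Baire space, so ``meager'' is a genuine smallness statement here.

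The only point needing attention is the smoothness threshold: the Sard--Smale theorem for a Fredholm map $f$ requires $f$ to be $C^r$ with $r\ge\max\{1,\,\mathrm{index}(f)+1\}$, and since $\mathrm{index}(\Pi)=0$ this just means $\Pi$ must be at least $C^1$. Theorem~\ref{prescribed-banach-theorem} supplies $C^{q-j}$ regularity, and the hypothesis $q>j$ is precisely what guarantees $q-j\ge 1$; the further hypothesis $j\ge 2$ ensures that $C^{j,\alpha}$ immersions are regular enough for the prescribed--mean--curvature operator, and hence $\Pi$, to be well defined and smooth on the stated Banach manifold. Beyond this bookkeeping there is no real obstacle, since all of the analytic content has been absorbed into Theorem~\ref{prescribed-banach-theorem}, whose proof is indicated via \cite{white-space}.
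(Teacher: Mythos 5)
Your proposal is correct and is exactly the paper's argument: the paper derives the corollary in one line from Theorem~\ref{prescribed-banach-theorem} and the Sard--Smale theorem \cite{sard-smale}*{1.3}, with the hypothesis $q>j$ supplying the required $C^1$ regularity for the index-$0$ Fredholm map $\Pi$. Your additional remarks on separability and the smoothness threshold are accurate elaborations of the same route.
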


The Corollary follows from Theorem~\ref{prescribed-banach-theorem}  and 
the Sard-Smale Theorem~\cite{sard-smale}*{1.3}.

Let $N$, $g$, and $h$ be as in Theorem~\ref{prescribed-banach-theorem}.
Let $\MM^h$ be the space of all pairs $(\gamma, [F])$ such that $\gamma\in C^\infty(N)$
and $F$ is a smooth, simple, immersion with prescribed mean curvature $h$ with respect
to the metric $e^{\gamma}g$ from some closed, oriented $m$-manifold into $N$. 
Let $\Pi:\MM^h \to C^\infty(N)$ be the projection onto the first factor:  
\[
\Pi(\gamma,[F])=\gamma.
\]
Let $\MMreg^h$ be the union of open sets $U\in \MM^h$ such that $\Pi$ maps
$U$ homeomorphically onto an open subset of $C^\infty(N)$.
It follows from the implicit function theorem that $(\gamma,[F])\in \MMreg^h$
if and only if $[F]$ has no nonzero normal $(e^{\gamma},h)$-Jacobi fields.

Let $\MMsing^h= \MM^h\setminus \MMreg^h$.

\begin{theorem}\label{prescribed-bumpy-theorem}
The set $\Pi(\MMsing^h)$ is a meager subset of $C^\infty(N)$.
\end{theorem}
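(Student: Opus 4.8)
The plan is to deduce this from Corollary~\ref{prescribed-bumpy-corollary} by the same argument that deduces the Bumpy Metrics Theorem (Theorem~\ref{bumpy-theorem}) from its finite-differentiability version in \cite{white-bumpy}; the only additional ingredient is that an immersion with prescribed mean curvature $h$ satisfies the same kind of uniformly elliptic second-order system as a minimal immersion (the function $h$ affects only lower-order terms), so the compactness and elliptic-regularity theory used there applies with no change. Fix $j=2$ and $\alpha=1/2$. For each $\nu\in\NN$ let $\Gg_\nu$ be the set of $\gamma\in C^\infty(N)$ such that $e^{\gamma}g$ admits no closed, simple, degenerate smooth immersion of prescribed mean curvature $h$ of complexity at most $\nu$, where ``complexity'' denotes area together with whatever topological bound the compactness theorem of \cite{white-bumpy} requires, and ``degenerate'' means carrying a nonzero normal Jacobi field. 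A smooth, simple, degenerate prescribed-mean-curvature-$h$ immersion is in particular a simple $C^{j,\alpha}$ one carrying a $C^{j,\alpha}$ Jacobi field, so any $\gamma\in C^\infty(N)$ lying outside $\bigcap_\nu\Gg_\nu$ is a critical value of $\Pi:\MM^h(q,j,\alpha)\to C^q(N)$ for every integer $q>j$; hence $\Pi(\MMsing^h)\subseteq C^\infty(N)\setminus\bigcap_\nu\Gg_\nu$, and it suffices to show that each $\Gg_\nu$ is open and dense in $C^\infty(N)$, after which the Baire category theorem gives the conclusion.

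First I would show the complement of $\Gg_\nu$ is closed in $C^\infty(N)$: a $C^\infty$-convergent sequence of conformal factors, each admitting a simple degenerate prescribed-mean-curvature-$h$ immersion of complexity $\le\nu$, subconverges --- by Arzel\`a--Ascoli together with uniform elliptic estimates for the prescribed-mean-curvature and Jacobi equations (normalizing each Jacobi field to have unit maximum) --- to a conformal factor admitting such an immersion. Next I would prove density. Given $\gamma_0\in C^\infty(N)$, an integer $k$, and $\eps>0$, choose an integer $q>\max(k,j)$ and let $\Cc^q_\nu\subseteq C^q(N)$ be the set of $\gamma$ for which $e^{\gamma}g$ has a simple degenerate closed $C^{j,\alpha}$ prescribed-mean-curvature-$h$ immersion of complexity $\le\nu$. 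The same compactness argument shows $\Cc^q_\nu$ is closed in $C^q(N)$, and $\Cc^q_\nu$ is contained in the set of critical values of $\Pi$, which is meager in $C^q(N)$ by Corollary~\ref{prescribed-bumpy-corollary}; since a closed meager subset of the Banach space $C^q(N)$ is nowhere dense, $C^q(N)\setminus\Cc^q_\nu$ is open and dense in $C^q(N)$. Intersecting it with the nonempty open $C^q$-ball $\{\gamma\in C^q(N):\|\gamma-\gamma_0\|_{C^k}<\eps\}$ yields a nonempty $C^q$-open set, which --- as smooth functions are $C^q$-dense in $C^q(N)$ --- contains a smooth $\gamma_1$; then $\gamma_1$ is smooth and lies outside $\Cc^q_\nu$, so elliptic regularity gives $\gamma_1\in\Gg_\nu$, while $\|\gamma_1-\gamma_0\|_{C^k}<\eps$. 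This proves $\Gg_\nu$ is $C^\infty$-dense.

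The hard part is the passage from $C^q(N)$ to $C^\infty(N)$, since meagerness in $C^q(N)$ does not by itself transfer to $C^\infty(N)$. What repairs this is the complexity bound, which forces $\Cc^q_\nu$ (and the complement of $\Gg_\nu$) to be \emph{closed}, so that $C^q(N)\setminus\Cc^q_\nu$ is open and can therefore meet the $C^q$-dense set of smooth conformal factors. The remaining points --- that the compactness theory of \cite{white-bumpy} carries over verbatim to prescribed mean curvature $h$, and the bookkeeping needed to track simplicity of the immersions under taking limits --- I expect to be routine and to go exactly as in \cite{white-bumpy}.
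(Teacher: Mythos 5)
Your proposal is correct and follows essentially the same route as the paper: the paper's entire proof is the remark that \cite{white-bumpy} deduces the $C^\infty$ statement from the finite-regularity Sard--Smale result (here Corollary~\ref{prescribed-bumpy-corollary}) and that the argument there is insensitive to $h$, and the open-dense decomposition by area/topology complexity bounds that you spell out is exactly the argument of that reference. The compactness and simplicity-under-limits details you defer are precisely the ones handled in \cite{white-bumpy}.
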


The paper~\cite{white-bumpy} proves that Theorem~\ref{prescribed-bumpy-theorem}
follows from Corollary~\ref{prescribed-bumpy-corollary} in the case $h=0$,
 but the proof given there works equal well for arbitrary $h$.

\begin{theorem}\label{main-theorem-prescribed}
Suppose that $N$ is a smooth, oriented, $(m+1)$-dimensional manifold with a smooth Riemannian metric $g_0$, that
$h$ is a smooth function on $N$, and
that $\Gamma$ is a smooth submanifold of $N$.
For a generic (in the sense of Baire category) smooth metric $g$ conformal to $g_0$,
if $F$ is any simple immersion of a closed, oriented $m$-manifold into $N$ that has prescribed mean curvature $h$
with respect to $g$, then
\begin{enumerate}[\upshape (1)]
\item   $F$ is strongly transverse to $\Gamma$, and
\item  $F$ is strongly self-transverse.
\end{enumerate}
\end{theorem}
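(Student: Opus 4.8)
The plan is to run the proofs of Theorems~\ref{transversality-theorem}, \ref{strong-transversality-theorem} and~\ref{strong-selfie-theorem} essentially word for word, replacing the minimal-surface objects by their prescribed-$h$ counterparts. Nearly every ingredient of those proofs is insensitive to the difference between the minimal equation and the prescribed-mean-curvature equation: the general PDE input (Theorem~\ref{PDE-theorem}) depends only on the Jacobi operator $J$, which in the present setting is again a second-order elliptic operator with leading term the Laplacian (see \S\ref{prescribed-section}) and hence has the unique continuation property; the Parametric Transversality Theorem and the openness of $k$-transversality are purely differential-topological. So there are really only two things to re-establish: the Bumpy Metrics Corollary for $\MM^h$, and the surjectivity statement (Proposition~\ref{G-image}) that powers the construction of families of solutions (Theorem~\ref{families-theorem}).

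For the first, I would observe that $\MM^h$ is second countable by the argument of Remark~\ref{second-countable} (now with the countably many closed oriented $m$-manifolds and orientation-preserving diffeomorphisms). Granting this, the proof of Corollary~\ref{bumpy-corollary} applies verbatim, with Theorem~\ref{prescribed-bumpy-theorem} in place of the Bumpy Metrics Theorem, to yield: if $\Kk$ is a relatively closed subset of an open subset of $\MM^h$ and every nonempty open subset of $\MMreg^h$ contains a point of $\MM^h\setminus\Kk$, then $\Pi(\Kk)$ is meager in $C^\infty(N)$.

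For the second, I would compute $G=D_1H(0,0)$, where $H(\gamma,\cdot)$ is now the prescribed-$h$ operator for the metric $e^\gamma g$. Because $M$ and $N$ are oriented, the normal bundle of $F$ is a trivial line bundle generated by the unit normal $\nu_F$, so normal fields are scalar multiples of $\nu_F$; a conformal change $g\mapsto e^\gamma g$ rescales $\nu_F$ and shifts the scalar mean curvature by a term linear in $d\gamma(\nu_F)$, while the right-hand side $-h(F)\nu_F$ is rescaled by a power of $e^{\gamma}$. Linearizing at $\gamma=0$ gives, up to constants fixed by the conventions of \S\ref{prescribed-section}, a formula
\[
   (G\gamma)(p) = -\tfrac m2\bigl(\nabla\gamma(F(p))\bigr)^{\perp} + c\,h(F(p))\,\gamma(F(p))\,\nu_F(p),
\]
the analogue of~\eqref{G-formula} up to the harmless zeroth-order term. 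The only property used in the proof of Proposition~\ref{G-image} is that $(G\gamma)(p)$ is determined by the $1$-jet of $\gamma$ at $F(p)$, and that along the embedded piece $U=F^{-1}(W)$ one can prescribe this $1$-jet freely (choosing $\gamma$ supported in $W$, with room to spare since there is both a value and a $\nu_F$-derivative of $\gamma$ to play with); this is unchanged here. Hence Proposition~\ref{G-image} holds in the prescribed setting, and since $J$ is invertible when $F$ has no nontrivial Jacobi fields, so does the implicit-function-theorem construction of Theorem~\ref{families-theorem}, and therefore also the Submersion Theorem~\ref{submersion-theorem} and the Multi-Submersion Theorem~\ref{multi-submersion-theorem}, whose proofs invoke only Proposition~\ref{G-image}, Theorem~\ref{families-theorem}, Theorem~\ref{PDE-theorem}, and linear algebra.

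With these two points in hand, I would conclude exactly as in \S\ref{strong-section}. Since $C^\infty(N)$ is a Baire space and a countable intersection of comeager sets is comeager, it suffices to prove, for each integer $k\ge1$ and each of the two choices $\Gamma'=\Gamma$ and $\Gamma'=N$, that the set of $\gamma$ for which every simple prescribed-$h$ immersion for $e^\gamma g$ is $k$-transverse to $\Gamma'$ is comeager. Fix $k$ and $\Gamma'$; let $Q\subset\MM^h$ be the set of $(\gamma,[F])$ with $F$ $k$-transverse to $\Gamma'$, which is open since $k$-transversality is an open condition, and let $\Kk=\MM^h\setminus Q$. Given any $(\gamma,[F])\in\MMreg^h$, the Multi-Submersion Theorem produces a family whose associated map $\widetilde\Ff$ is a submersion along $\widetilde\Ff^{-1}(\Delta^kN)$, hence transverse to $\Delta^k\Gamma'$; by the Parametric Transversality Theorem $\Ff(\tau,\cdot)$ is $k$-transverse to $\Gamma'$ for almost every $\tau$, in particular along a sequence $\tau_i\to0$, so every nonempty open subset of $\MMreg^h$ meets $\MM^h\setminus\Kk$. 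The prescribed Bumpy Metrics Corollary then shows $\Pi(\Kk)$ is meager, and its complement is the desired comeager set of metrics. The only genuinely new work is the computation of $G$ in the third paragraph, and the only point there that needs care is checking that the new zeroth-order $h$-term does not interfere with the freedom to prescribe the $1$-jet of $\gamma$; it does not, so I anticipate no real obstacle beyond bookkeeping.
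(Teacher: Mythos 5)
Your proposal is correct and follows the same route as the paper, which simply asserts that, granted Theorem~\ref{prescribed-bumpy-theorem}, the proof is ``exactly as in the minimal case''; you have usefully made explicit the two points that actually need checking (the analogue of Corollary~\ref{bumpy-corollary} for $\MM^h$ via second countability, and the analogue of Proposition~\ref{G-image}, where the extra zeroth-order $h$-term in $G$ is harmless because one still controls the value and normal derivative of $\gamma$ along the embedded piece $F(U)$). No gaps.
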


Given Theorem~\ref{prescribed-bumpy-theorem}, the proof of Theorem~\ref{main-theorem-prescribed}
is exactly as in the minimal case.

(Theorem~\ref{prescribed-bumpy-theorem} is about a given closed $m$-manifold, whereas
Theorem~\ref{main-theorem-prescribed} is an assertion about all closed $m$-manifolds.
Note that for assertions about Baire Category, it does not matter whether or not one fixes the domain
manifold, since there are only countably many diffeomorphism types of smooth, closed $m$-manifolds.)

The following special case of Theorem~\ref{main-theorem-prescribed}
 is important in Xin Zhou's proof~\cite{zhou-multiplicity} of the multiplicity-one conjecture:

\begin{corollary}
Suppose in Theorem~\ref{main-theorem-prescribed} that $h^{-1}(0)$ is smoothly embedded
$m$-manifold in $N$.
For a generic (in the sense of Baire category) smooth metric $g$ conformal to $g_0$,
if $F$ is any simple immersion of a closed, oriented $m$-manifold into $N$ that has prescribed mean curvature $h$
with respect to $g$, then $F$ is transverse to $h^{-1}(0)$.
\end{corollary}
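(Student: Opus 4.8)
The plan is to obtain the corollary as the special case $\Gamma := h^{-1}(0)$ of Theorem~\ref{main-theorem-prescribed}. By hypothesis, $h^{-1}(0)$ is a smoothly embedded $m$-manifold in $N$, hence a smooth submanifold of $N$ in the sense required by that theorem; this is the only point at which the hypothesis on $h$ is used, and its role is precisely to make $\Gamma = h^{-1}(0)$ an admissible choice. So first I would fix once and for all the generic set $\mathcal G$ of smooth metrics conformal to $g_0$ furnished by Theorem~\ref{main-theorem-prescribed} applied to this $\Gamma$; that is, for $g\in\mathcal G$, every simple immersion $F$ of a closed, oriented $m$-manifold into $N$ with prescribed mean curvature $h$ with respect to $g$ is strongly transverse to $h^{-1}(0)$ (and strongly self-transverse, though the latter is not needed here).

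Next I would unwind what this gives at the level of ordinary transversality. Strong transversality of $F$ to $\Gamma$ means $k$-transversality for every $k\ge 1$; in particular it includes $1$-transversality, which, as noted immediately after the definition of $k$-transversality, is exactly ordinary transversality of $F$ to $\Gamma$. Hence for $g\in\mathcal G$ every such $F$ is transverse to $h^{-1}(0)$, which is the assertion of the corollary. (Concretely, at a point $x\in M$ with $F(x)=p\in h^{-1}(0)$, transversality says $DF(x)(\mathrm{Tan}(M,x)) + \mathrm{Tan}(h^{-1}(0),p) = \mathrm{Tan}(N,p)$; since both summands are $m$-dimensional in the $(m+1)$-dimensional space $\mathrm{Tan}(N,p)$, this just says the sheet of $F$ through $p$ is not tangent to $h^{-1}(0)$ there.)

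There is essentially no obstacle to overcome: all the analytic content — the prescribed-mean-curvature bumpy metrics theorem (Theorem~\ref{prescribed-bumpy-theorem}), the submersion construction, and the Runge-type PDE input — is already packaged inside Theorem~\ref{main-theorem-prescribed}, so the corollary is a genuine one-line deduction. If one wishes to be economical, it is worth remarking that the full strength of Theorem~\ref{main-theorem-prescribed} is not even needed: since only ordinary ($1$-)transversality is claimed, it suffices to invoke the prescribed-mean-curvature analogue of Theorem~\ref{transversality-theorem} (the case $k=1$), whose proof uses only the Submersion Theorem rather than the multi-jet version. Either way the argument closes immediately.
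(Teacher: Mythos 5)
Your proposal is correct and is exactly the paper's intended argument: the paper presents this corollary as the special case $\Gamma = h^{-1}(0)$ of Theorem~\ref{main-theorem-prescribed}, with strong transversality implying ordinary ($1$-)transversality. The hypothesis that $h^{-1}(0)$ is a smoothly embedded $m$-manifold is used, as you say, only to make it an admissible choice of $\Gamma$.
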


\begin{remark}
In this section, we have been assuming that $N$ and $M$ are orientable.
Actually, such orientations are not necessary: it suffices for the immersions
one works with to have oriented normal bundles.  That is, we work with
immersions $F:M\to N$ that are equipped with nowhere vanishing sections of the normal bundle.
With minor changes to the definitions, all the results in this section remain true (with the 
same proofs) in that 
slightly more general setting.
\end{remark}

\section{Generic Regularity of 2-Dimensional Locally Area-Minimizing Cycles}
\label{regularity-section}

In this section, we prove

\begin{theorem}\label{first-regularity-theorem}
For a generic smooth Riemannian metric $g$ on a manifold $N$, if $T$
  is a $2$-dimensional locally $g$-area-minimizing
 integral
cycle in $N$, then the support of $T$ is a smoothly embedded submanifold.
\end{theorem}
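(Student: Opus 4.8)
The plan is to combine the transversality results above with the interior regularity theory for two-dimensional area-minimizing integral currents. We may assume $N$ is compact, since the conclusion is local and exhausting a noncompact $N$ by compact regions does not affect Baire category; then $\spt(T)$ is compact. By Federer's dimension-reducing argument together with Chang's theorem that two-dimensional area-minimizing integral currents are (possibly branched) minimal surfaces, $\operatorname{sing}(T)$ is finite, $\operatorname{reg}(T):=\spt(T)\setminus\operatorname{sing}(T)$ is a smooth embedded minimal surface $\Sigma_0$ (on which $T$ is an integer combination of the components of $\Sigma_0$ with their induced orientations), and near each $p\in\spt(T)$ the support is a finite union of minimal disks, each either smoothly embedded or branched at $p$. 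Hence it suffices to prove that for a generic $g$ no two distinct disks meet and no disk has a non-embedded (cusp-type) branched image -- i.e.\ that $\operatorname{sing}(T)=\emptyset$. (When $\dim N=3$ this holds for every metric by classical regularity, so the content is $\dim N\ge4$; the flat-chain-mod-$2$ case is parallel.)

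To attack the crossings I would first globalize: after filling in the finitely many punctures of $\Sigma_0$ arising from smoothly embedded limiting disks, the embedded part of $\spt(T)$ becomes the image of a simple, $g$-minimal immersion $f$ of a closed surface, with $f$ injective on the dense set $\Sigma_0$. By Theorem~\ref{strong-selfie-theorem}, for a generic $g$ this $f$ is strongly self-transverse, so by Theorem~\ref{more-intuitive-theorem} the tangent planes to the sheets of $\spt(T)$ meeting at any point are strongly transverse subspaces of the ambient tangent space. Thus at a crossing point $p$ of $\ell\ge2$ embedded sheets the tangent planes $P_1,\dots,P_\ell$ are $\ell$ strongly transverse $2$-planes, while the blow-up of $T$ at $p$ is the area-minimizing cone $\sum_i Q_i\,P_i$ (with positive integer multiplicities $Q_i$). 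When $\dim N\ge5$ this is immediately contradictory, since two $2$-planes can never be strongly transverse in $\RR^n$ for $n>4$ (their sum has dimension $\le4$); in that codimension range Corollary~\ref{strong-selfie-corollary} is precisely the statement that forces $f$ to be an embedding. When $\dim N=4$ one must instead invoke the classification of area-minimizing two-dimensional cones in $\RR^4$: the union of two transverse $2$-planes is area-minimizing only when they form a calibrated pair (for instance a pair of complex lines), and a generic pair is not of that type, so the genuinely locally minimizing cycle near $p$ must desingularize, contradicting $p\in\operatorname{sing}(T)$.

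There remain the cusp-type branch points, where a single minimal disk of $\spt(T)$ has a non-embedded image at $p$, as for a cuspidal complex curve, even though the support is an embedded punctured disk near (but not at) $p$, so that transversality of sheets gives no information. This can occur for area-minimizing currents in the flat or Kähler setting in any dimension $\ge4$, and so must be excluded for a generic metric. The mechanism should again be rigidity of calibrations: a branched minimal disk is locally area-minimizing only if it is calibrated near $p$, which for a generic metric fails; concretely, one would perturb $g$ in a small ball about $p$ so that the perturbed branched minimal disk no longer matches the calibrated model to the required order, and thereby strictly decrease the area of some filling of $\spt(T)\cap\partial B_r(p)$. Once cusps are excluded, $\operatorname{sing}(T)=\emptyset$, so $\spt(T)$ is a smoothly embedded minimal surface.

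The main obstacle, as I see it, is the codimension-two phenomena -- both the $\dim N=4$ crossings and the cusp-type branch points in every dimension $\ge4$. For these the transversality theorems of this paper are genuinely insufficient, since transverse unions of complex lines and cuspidal complex curves are area-minimizing whenever the ambient metric is Kähler; one must show that a generic metric admits no calibration making such a configuration minimizing. This forces a coupling of the generic-metric input (bumpy metrics and transversality) with the rigidity theory for area-minimizing two-dimensional cones -- angle criteria for pairs of planes, and the structure of branched covers of a plane -- together with a perturbation argument supported near the putative singular point. A secondary, more routine difficulty is the bookkeeping in passing from the a priori local, possibly branched description of $\spt(T)$ to a genuine simple $g$-minimal immersion of a closed surface, since Theorems~\ref{strong-selfie-theorem} and~\ref{strong-selfie-corollary} apply only to honest immersions of closed manifolds.
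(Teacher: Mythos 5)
Your skeleton matches the paper's: reduce via Chang/De Lellis--Spadaro--Spolaor to a simple branched minimal immersion of a closed surface whose image is $\spt(T)$, dispose of $\dim N\ge 5$ by Corollary~\ref{strong-selfie-corollary}, and recognize that the real content is (a) transverse crossings in $\dim N=4$, which survive strong self-transversality but must satisfy the equal-angle/calibrated-pair condition of Morgan, and (b) branch points. You have correctly located where the work is, but at both of those points your proposal stops at "one must show\dots" rather than giving an argument, and these are precisely the steps the paper actually carries out.

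For (a), asserting that "a generic pair is not of that type" is not a proof: strong self-transversality constrains nothing about the intersection angles, and genericity of the metric does not act on the pair of tangent planes by an independent rotation, so one cannot simply quote that calibrated pairs are non-generic among pairs of planes. The paper's argument is a second Baire-category pass: let $\Kk'$ be the set of $(\gamma,[F])$ that are strongly self-transverse, non-embedded, and satisfy the equal-angle property at every double point; this is relatively closed in $\MM\setminus\Kk$, so by the Bumpy Metrics Corollary~\ref{bumpy-corollary} it suffices to perturb any regular $(\gamma,[F])\in\Kk'$ out of $\Kk'$. This is done by using the Runge-Type Theorem~\ref{runge-type-theorem} to build a normal field $f$ with prescribed $1$-jets at the two preimages $p,q$ of a double point ($f(p)=f(q)=0$, $Df(p)=0$, $Df(q)\ne 0$ with nontrivial kernel), and Theorem~\ref{families-theorem} to realize $f$ as the initial velocity of a family $(\gamma_t,[F_t])\in\MM$; the skewed derivative at $q$ destroys the equal-angle condition for small $t\ne 0$. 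Nothing in your proposal produces this jet-prescription mechanism, which is the reason the Runge/PDE machinery of the paper exists. For (b), the paper does not mount a new calibration-rigidity perturbation at branch points at all: it invokes Moore's theorem that for a generic metric every simple branched minimal immersion of a closed surface is free of branch points, and only then applies the transversality results to honest immersions. Your proposed route (perturb $g$ near the cusp so the branched disk is no longer calibrated and strictly shorten a filling) is essentially a sketch of a result whose proof occupies a separate long paper; as written it is a gap, not a step.
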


At the end of this section, we prove the analogous result for flat chains mod $2$.

Here, ``integral cycle" means ``integral current with boundary $0$", and 
 ``$T$ is locally area-minimizing" means that each point of $N$ has a neighborhood $\BB$ such
that the area (i.e., mass) of $T\llcorner \BB$ is less than or equal to the area of $T'$ for any integral current $T'$
in $\BB$ such that $\partial T'=\partial (T\llcorner\BB)$.  In particular, if $T$ minimizes area in its homology class, then it is locally
area-minimizing.  Thus we have

\begin{corollary}\label{first-regularity-corollary}
For a generic smooth Riemannian metric $g$ on a manifold $N$, if $T$ is a $2$-dimensional
 integral cycle in $N$ that minimizes $g$-area in its integral homology class, then the support of $T$ is a smoothly embedded submanifold.
\end{corollary}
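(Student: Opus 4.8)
The plan is to reduce Theorem~\ref{first-regularity-theorem} to the generic transversality results already established, using the structure theory of $2$-dimensional area-minimizing integral currents. First I would recall the key regularity facts about $2$-dimensional locally area-minimizing integral currents in a smooth Riemannian manifold: by results going back to Federer (and refined by Almgren, White, Chang, and others) the support of such a current $T$ is, away from a discrete (in fact locally finite) set of singular points, a smoothly embedded minimal surface with integer multiplicity; moreover at each singular point the tangent cone is a finite collection of multiplicity-one planes (more precisely, in the two-dimensional case the current is, near each point, a finite union of smooth minimal sheets meeting at an isolated point — a ``branch point'' or a classical crossing). The crucial point for us is that the only possible singularities of a $2$-dimensional area-minimizing integral current are points where finitely many smooth embedded minimal sheets pass through a common point. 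So the support of $T$ fails to be a smoothly embedded submanifold only if there is a point $p$ through which two or more minimal sheets pass, i.e., a genuine self-intersection of the underlying (possibly disconnected) minimal surface, or a branch point.

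Next I would package the regular part of $\spt(T)$ as a minimal immersion. Away from the finitely many singular points, $\spt(T)$ is an embedded minimal surface; the local sheet structure at each singular point then exhibits $\spt(T)$ as the image of a simple, smooth, $g$-minimal immersion $F : M \to N$ of a (possibly disconnected, not necessarily compact) surface $M$ — simple because $T$ has multiplicity one almost everywhere on each sheet, which is exactly the condition that makes $F$ simple in the sense of Section~\ref{notation-section}. Here one must be slightly careful that $M$ need not be closed; but the relevant transversality statements are genuinely local near the finitely many singular points, and by working in a small ball around each singular point one can either invoke the fact that the transversality conclusions of Theorem~\ref{strong-selfie-theorem} are local, or observe that for the purpose of Baire category it suffices to treat closed $M$ since minimal surfaces through a fixed point that minimize homologically have an a priori area bound and hence (in favorable dimensions) are compact. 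The point I want to extract: for a generic metric $g$, every simple $g$-minimal immersion is strongly self-transverse, and by Theorem~\ref{more-intuitive-theorem}, at a point where $k$ sheets cross, strong self-transversality forces the tangent planes $V_1,\dots,V_k$ to be strongly transverse subspaces of $\Tan_p N$. In particular $k$ smooth sheets of a $2$-dimensional surface inside $N$ can be pairwise strongly transverse only if $2k \le \dim N$ or, when $k \ge 2$, forces $\dim(V_1+\dots+V_k)$ to be as large as possible, which already rules out all branch points (a branch point has a single tangent plane of multiplicity $\ge 2$, which is never strongly transverse to itself).

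The main work, then, is to rule out honest crossings. Here I would argue as follows: suppose $p$ is a singular point of $\spt(T)$ and $k\ge 2$ sheets pass through it. For a generic metric, strong self-transversality holds, so $V_1,\dots,V_k$ are strongly transverse, which by the geometric characterization means in particular that no $v_i\in V_i^\perp$ summing to zero can be nonzero. But a crossing of two minimal $2$-sheets in a manifold $N$ of dimension $\ge 4$ can perfectly well be transverse — transversality alone does not remove the singularity; it merely makes it a clean double point. So transversality of the sheets is \emph{not} enough to conclude embeddedness unless $\dim N \le 3$. This is exactly the subtlety I expect to be the main obstacle, and the resolution must come from the minimizing hypothesis: a \emph{minimizing} integral current cannot have a transverse crossing of sheets, because near such a crossing one could do a ``desingularization'' — replace the two transverse sheets (locally a pair of planes) by a smooth minimal surface (or a minimizing current) with the same boundary and strictly smaller area, contradicting local minimality. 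Concretely, in the two-dimensional case the density of $T$ at an interior regular crossing would be $\ge 2$ (sum of the densities of the sheets), while the monotonicity formula and the structure of area-minimizing tangent cones (multiplicity-one planes only, for $2$-dimensional area minimizers) show the density is everywhere $<2$ on a minimizer except... — and here one invokes the sharp fact that $2$-dimensional area-minimizing integral currents have \emph{no} interior singular points of density $\ge 2$ that are ``crossings'' unless they are removable; the only remaining possibility is a branch point, and branch points are excluded by the preceding paragraph. Assembling: for $g$ in the generic (residual) set supplied by Theorem~\ref{strong-selfie-theorem}, any $2$-dimensional locally $g$-area-minimizing integral cycle has no branch points (since a branch point's tangent plane is not strongly transverse to itself) and no crossing points (since a minimizer with a crossing would contradict either local minimality via desingularization or the density bound), hence its support is everywhere a smoothly embedded minimal surface. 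Corollary~\ref{first-regularity-corollary} is then immediate, since a homologically minimizing cycle is locally minimizing, and the mod-$2$ statement follows by the same argument using the mod-$2$ analogues of the structure and regularity theory together with the $\Gamma = N$ case of the transversality theorem applied to the (non-orientable-normal-bundle-allowed) immersion parametrizing the regular part.
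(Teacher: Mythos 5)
Your reduction of the corollary to the local-minimizing case is exactly the paper's (a homological minimizer is locally minimizing), and your appeal to Corollary~\ref{strong-selfie-corollary} when $\dim N\ge 5$ is also the right move. But the two places where you sensed difficulty are precisely where your argument breaks down. First, branch points: you claim they are excluded because ``a branch point's tangent plane is not strongly transverse to itself.'' Strong self-transversality, as defined in \S\ref{strong-section}, is a condition on the map $\widetilde F$ restricted to $\Omega^kM$, i.e.\ on tuples of \emph{distinct} preimage points; it says nothing about a single point of $M$ where $DF$ degenerates, and indeed every transversality theorem in the paper is proved only for immersions (the whole machinery of $\Vv_F$, the Jacobi operator, and Theorem~\ref{families-theorem} presupposes $F$ is an immersion). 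Branch points must be removed by a separate genericity theorem -- Moore's bumpy-metrics theorem for branched minimal surfaces -- which the paper invokes and your argument is missing.

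Second, and more seriously, your treatment of crossings when $\dim N=4$ rests on a false assertion. Two complex lines through the origin in $\CC^2=\RR^4$ meet transversely at a single point, have density $2$ there, and their union \emph{is} area-minimizing (it is calibrated by the K\"ahler form); so a locally area-minimizing $2$-cycle can perfectly well have a transverse double point, and there is no ``desingularization with strictly smaller area'' and no density-$<2$ bound to invoke. Strong self-transversality therefore cannot yield embeddedness in dimension $4$; it only guarantees that double points are clean transverse crossings. The paper's actual mechanism is Morgan's rigidity theorem: at a double point of an area-minimizing $2$-cycle the two tangent planes must be simultaneously invariant under a complex structure compatible with the conformal class (property~\eqref{planes-property}). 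This is a closed, codimension-positive condition on the pair of planes, and the paper destroys it generically by using the Runge-Type Theorem~\ref{runge-type-theorem} to build a normal field $f$ with prescribed values and first derivatives at the two preimage points (condition~\eqref{skew}), Theorem~\ref{families-theorem} to realize $f$ as the velocity of a family of minimal immersions for conformally varying metrics, and the Bumpy Metrics Corollary to conclude meagerness. Without Morgan's theorem and this perturbation step, the dimension-$4$ case is not addressed.
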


If the dimension of $N$ is less than $4$, then the support of any locally area minimizing
integral cycle (for an arbitrary smooth ambient metric) is a smoothly embedded submanifold.
  Thus we will assume throughout this section that $\dim(N)\ge 4$.

De Lellis, Spadaro, and Spolaor~\cite{DeLellis-1, DeLellis-2, DeLellis-3},
 building on earlier work of Sheldon Chang~\cite{Chang}, proved that if $T$ is a $2$-dimensional
locally area-minimizing integral cycle, then the support of $T$ is a branched minimal surface.
Thus there is a closed (not necessarily connected) $2$-manifold $M$ and 
a branched minimal immersion
\[
    F: M \to N
\]
such that $F$ is simple and such that $F(M)$ is the support of $T$.

(If $M$ has connected components $M_1, M_2,\dots, M_k$, then $T$
will be the current 
\[
   \sum_i n_i F_\#[M_i]
\]
for some positive integers $n_1, \dots, n_k$.)

According to a theorem of J. D. Moore~\cite{moore-bumpy, moore-bumpy-correction},
 for a generic smooth metric on $N$,
every simple branched minimal immersion into $N$ is in fact an immersion (that is, free of branch points).
Consequently, for such a metric, every area-minimizing integral cycle $T$
has support equal to $F(M)$ for a simple minimal immersion $F:M\to N$
of a closed $2$-manifold $M$ into $N$.

Thus Theorem~\ref{first-regularity-theorem} follows from

\begin{theorem}
A generic smooth Riemannian metric $g$ on $N$ has the following property.
If $F$ is a simple $g$-minimal immersion of a closed $2$-manifold $M$
 into $N$ and if $F(M)$ is the support of a locally
area-minimizing integral cycle, then $F$ is an embedding.
\end{theorem}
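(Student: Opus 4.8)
The plan is to combine the strong self-transversality theorem (Theorem~\ref{strong-selfie-theorem}) with the special structure of the singular set of an area-minimizing current. By Theorem~\ref{strong-selfie-theorem}, for a generic smooth metric $g$ conformal to the given one (and hence, letting the background metric vary over a countable dense family, for a generic smooth metric), every simple $g$-minimal immersion $F:M\to N$ of a closed manifold is strongly self-transverse. So it suffices to show: if $F:M\to N$ is a simple $g$-minimal immersion of a closed $2$-manifold that is strongly self-transverse, and $F(M)$ is the support of a locally area-minimizing integral cycle, then $F$ is an embedding.

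The key step is a local dimension/transversality count at a hypothetical self-intersection point. Suppose $p\in N$ lies on $k\ge 2$ sheets of $F(M)$, with tangent planes $V_1,\dots,V_k\subset\Tan(N,p)$, each of dimension $2$. By Theorem~\ref{more-intuitive-theorem}(2), strong self-transversality at $p$ means $V_1,\dots,V_k$ are strongly transverse, which by Theorem~\ref{linear-equivalence-theorem}\eqref{dim-equality} forces
\[
   \dim\big((\textstyle\bigcap_i V_i)^\perp\big) = \sum_{i=1}^k \dim(V_i^\perp) = k\,(n-2),
\]
where $n=\dim(N)$. Since the left side is at most $n$, this gives $k(n-2)\le n$, i.e. $k\le n/(n-2)$. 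For $n\ge 4$ this yields $k\le 2$, and when $n\ge 5$ it forces $k=1$, contradicting $k\ge 2$ outright; for $n=4$ we get $k\le 2$, and if $k=2$ then the two tangent planes $V_1,V_2$ must satisfy $V_1\cap V_2=\{0\}$ and $V_1+V_2=\Tan(N,p)$, i.e. the two sheets meet transversally (as ordinary surfaces) at an isolated point of the intersection.

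It remains to rule out such transverse double points for an area-minimizing current, and this is where I expect the main obstacle to lie. The point is that near an isolated transverse intersection of two smooth minimal sheets in $N^4$, the support of $T$ looks like a pair of minimal disks meeting only at $p$; blowing up at $p$, the tangent cone is a pair of transverse $2$-planes in $\RR^4$, each with some positive integer multiplicity. But a union of two distinct $2$-planes through the origin in $\RR^4$ is not area-minimizing (equivalently, it is not calibrated / it violates the regularity theorem, since an area-minimizing $2$-dimensional cone in $\RR^4$ is either a single plane with multiplicity or is supported on a plane — by the classification of area-minimizing two-dimensional cones, or directly because one can strictly decrease area by a suitable homologous competitor near the intersection, as in the standard cut-and-paste argument). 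This contradicts the fact that tangent cones of locally area-minimizing currents are themselves area-minimizing. Hence $k=1$ everywhere, so $F$ is injective; being also a $g$-minimal immersion of a closed manifold (hence proper), $F$ is an embedding. The only genuinely delicate ingredient is the assertion that a pair of transverse multiplicity-one (or positive integer multiplicity) $2$-planes in $\RR^4$ fails to minimize; this can be cited from the structure theory for two-dimensional area-minimizing cones, or handled by an explicit comparison surface desingularizing the intersection, and I would present it as a short lemma with a reference rather than grinding through the competitor construction.
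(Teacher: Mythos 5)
Your first step---reducing via strong self-transversality to the case of a transverse double point in a $4$-manifold---is the same as the paper's: for $\dim(N)\ge 5$ the count $k(n-2)\le n$ forces $k=1$, so the area-minimizing hypothesis is not even needed there (this is Corollary~\ref{strong-selfie-corollary}), and the entire content of the theorem is the case $n=4$, $k=2$.

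But your way of ruling out that transverse double point rests on a false lemma. A transversely intersecting pair of $2$-planes through the origin in $\RR^4$ \emph{can} be area-minimizing: the union of two complex lines in $\CC^2\cong\RR^4$, say $\{z_2=0\}\cup\{z_1=0\}$, is calibrated by the K\"ahler form $dx_1\wedge dy_1+dx_2\wedge dy_2$ and hence is an area-minimizing cone. More generally, Morgan's theorem \cite{morgan}---which the paper quotes as property~\eqref{planes-property}---characterizes exactly which pairs of tangent planes can occur at a point of the support of a locally minimizing $2$-cycle: those that are simultaneously holomorphic for some compatible complex structure. So there is no classification of two-dimensional area-minimizing cones in $\RR^4$ reducing everything to a single plane, and no cut-and-paste competitor desingularizes a holomorphic pair. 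Correspondingly, for special metrics there genuinely exist homologically area-minimizing $2$-cycles with transverse self-intersections (two complex lines in $\CC P^2$, for instance), so the statement cannot be proved metric-by-metric from strong self-transversality alone; genericity must be invoked a second time. That is what the paper does: it isolates the residual bad set $\Kk'$ of strongly self-transverse, non-embedded minimal immersions all of whose double-point tangent planes satisfy~\eqref{planes-property}, and uses the Runge-type Theorem~\ref{runge-type-theorem} together with Theorem~\ref{families-theorem} to produce a conformal perturbation of the metric whose associated motion of the minimal surface is a normal field $f$ with $f(p)=f(q)=0$, $Df(p)=0$, $Df(q)\ne 0$ and $Df(q)$ singular; this destroys property~\eqref{planes-property} at that double point, so by the Bumpy Metrics Corollary~\ref{bumpy-corollary} the projection $\Pi(\Kk')$ is meager. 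This second perturbation argument is the essential ingredient missing from your proposal.
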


In fact, we will prove a somewhat stronger result:

\begin{theorem}
Let $g_0$ be a smooth Riemannian metric on $N$.
A generic smooth metric $g$ conformal to $g_0$ has the following property.
If $F:M\to N$ is a simple $g$-minimal immersion of a closed $2$-manifold
 $M$ into $N$ and if $F(M)$ is the support of a locally
$g$-area-minimizing integral cycle, then $F$ is an embedding.
\end{theorem}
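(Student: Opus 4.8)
The plan is to combine the strong self-transversality available for generic metrics (Theorem~\ref{strong-selfie-theorem} and Corollary~\ref{strong-selfie-corollary}) with the classification of two-dimensional area-minimizing cones. Since $\dim M=2$, Corollary~\ref{strong-selfie-corollary} disposes of the case $\dim N\ge 5$: for a generic metric every simple $g$-minimal immersion of a closed $2$-manifold is already an embedding, and the minimizing hypothesis is not needed. So the real content is the case $\dim N=4$, which I treat below. Fix a generic metric $g=e^{\gamma}g_0$ with: (i) every simple $g$-minimal immersion of a closed manifold has no nontrivial Jacobi fields (Bumpy Metrics Theorem); (ii) every simple $g$-minimal immersion of a closed $2$-manifold into $N$ is strongly self-transverse (Theorem~\ref{strong-selfie-theorem}); and (iii) for every simple $g$-minimal immersion $F$ of a closed $2$-manifold into $N$ and every double point $p=F(x_1)=F(x_2)$ with $x_1\ne x_2$, the tangent planes $\Tan(F,x_1)$ and $\Tan(F,x_2)$ in $\Tan_pN\cong\RR^4$ do not have equal characteristic (Jordan) angles. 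That (iii) can be added to a generic metric is the main new point; see the third paragraph.

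Granting (i)--(iii), suppose $F:M\to N$ is a simple $g$-minimal immersion of a closed $2$-manifold with $F(M)=\spt T$, $T$ a locally $g$-area-minimizing integral cycle, and suppose $F$ is not an embedding. By (ii), and because in $\RR^4$ a transverse self-intersection of a $2$-dimensional immersion is $0$-dimensional while triple and higher points have negative expected dimension, the non-embedded set of $F$ is a finite nonempty set of transverse double points with no triple points. Choose one such point $p=F(x_1)=F(x_2)$. Near $p$ the support of $T$ is the union of the two smooth minimal sheets $\Sigma_1,\Sigma_2$ through $x_1,x_2$, so (using $\partial T=0$, integrality, and the Constancy Theorem) $T$ agrees near $p$ with $c_1[\Sigma_1]+c_2[\Sigma_2]$ for positive integers $c_i$; hence the (manifestly unique) tangent cone of $T$ at $p$ is $c_1[P_1]+c_2[P_2]$ with $P_i=\Tan(F,x_i)$. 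As $T$ is locally area-minimizing this cone is area-minimizing, so by the classification of two-dimensional area-minimizing cones in $\RR^4$ that are unions of planes (Morgan, building on Lawlor and others) $P_1$ and $P_2$ have equal characteristic angles. This contradicts (iii), so $F$ is an embedding.

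To obtain (iii) I would follow the scheme of the transversality theorems. Fix a closed $2$-manifold $M$ (there are countably many). Let $\UU\subset\MM$ be the open set of $(\gamma,[F])\in\MMreg$ with $F$ strongly self-transverse, and let $\Bb\subset\UU$ consist of those $(\gamma,[F])$ for which $F$ has a double point with equal-characteristic-angle tangent planes; since that condition is closed under limits and strong self-transversality keeps double points isolated and bounded away from the diagonal, $\Bb$ is relatively closed in $\UU$. By the Bumpy Metrics Corollary~\ref{bumpy-corollary} applied to $\Bb$, together with (i) and (ii), it suffices to show that every nonempty open $O\subset\MMreg$ with $O\subset\UU$ contains a point not in $\Bb$. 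Starting from $(\gamma_0,[F_0])\in O$, I would use a version of the Multi-Submersion Theorem~\ref{multi-submersion-theorem} with $k=2$ in which the finite-dimensional space of velocity fields $V\subset V_0$ is chosen \emph{$1$-jet ample} at every double-point pair of $F_0$ --- exactly the extra strength of the Runge-Type Theorem~\ref{runge-type-theorem} (prescribed values \emph{and} first derivatives), fed through the compactness argument of Theorem~\ref{ample-compact-k-points-theorem}. One then gets smooth families $\gamma:\BB^d(0,\eps)\to C^\infty(N)$ and $\Ff:\BB^d(0,\eps)\times M\to N$ with $\Ff(0,\cdot)=F_0$, each $\Ff(\tau,\cdot)$ minimal for $e^{\gamma(\tau)}g$, for which the double-point locus $\Cc=\widetilde\Ff^{-1}(\Delta^2N)$ is a $d$-dimensional submanifold of $\BB^d(0,\eps)\times\Omega^2M$ on which the Gauss-lifted map
\[
   (\tau,x_1,x_2)\longmapsto\bigl(\Tan(\Ff(\tau,\cdot),x_1),\,\Tan(\Ff(\tau,\cdot),x_2)\bigr)\in\mathrm{Gr}(2,4)\times\mathrm{Gr}(2,4)
\]
is a submersion: prescribing the $1$-jet of the velocity field at $x_i$ lets one rotate $\Tan(\Ff(\tau,\cdot),x_i)$ arbitrarily and independently, just as prescribing values made $\Ff$ a submersion in Theorem~\ref{submersion-theorem}. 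The pairs of planes with equal characteristic angles form a closed subset of positive codimension in $\mathrm{Gr}(2,4)\times\mathrm{Gr}(2,4)$, so their preimage in $\Cc$ has measure zero, hence so does its image under the projection $\Cc\to\BB^d(0,\eps)$, a smooth map between manifolds of the same dimension $d$. Thus $\Ff(\tau,\cdot)$ has no such double point for $\tau$ arbitrarily close to $0$, i.e.\ $(\gamma(\tau),[\Ff(\tau,\cdot)])\notin\Bb$. Intersecting over the countably many $M$ and combining with (i) and (ii) produces a generic metric satisfying (i)--(iii).

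The hard part is twofold. Conceptually, the essential ingredient not provided by the transversality results of the earlier sections --- and the reason the minimizing hypothesis is indispensable --- is the classification of two-dimensional area-minimizing cones in $\RR^4$: a transverse pair of planes with unequal characteristic angles is not area-minimizing and so can be destroyed by a generic perturbation, whereas an equiangular pair (a pair of complex lines for a suitable orthogonal complex structure) \emph{is} a genuine area-minimizing cone and cannot be excluded by transversality alone. Technically, the work is in upgrading the ample-space constructions so that one controls first jets, not just values, of the normal fields at the finitely many double-point preimages --- which the Runge-Type Theorem supplies --- and then checking that this $1$-jet control makes the Gauss-lifted family a submersion onto the Grassmannian bundle; the remaining bookkeeping with relatively closed sets is as in the transversality arguments.
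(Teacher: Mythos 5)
Your proposal is correct, and its skeleton matches the paper's: both reduce to $\dim N=4$ via Corollary~\ref{strong-selfie-corollary}, both identify Morgan's equiangularity property of the two tangent planes at a singular point of a $2$-dimensional area minimizer in a $4$-manifold as the precise obstruction that transversality alone cannot remove, both run the genericity argument through the Bumpy Metrics Corollary~\ref{bumpy-corollary} applied to a relatively closed ``bad'' subset of the open set of strongly self-transverse pairs, and both ultimately rest on the first-derivative control supplied by the Runge-Type Theorem~\ref{runge-type-theorem}. Where you diverge is the final perturbation step. You aim for the stronger statement that generically \emph{no} double point is equiangular, by making the Gauss-lifted family a submersion into the Grassmannian bundle over the double-point locus and invoking a Sard-type argument; this forces you to upgrade the ample-space constructions to $1$-jet ampleness uniformly over the compact family of double-point pairs and to track the codimension of the equiangular locus in $\mathrm{Gr}(2,4)\times\mathrm{Gr}(2,4)$. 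The paper instead defines its bad set $\Kk'$ by requiring \emph{every} self-intersection to be equiangular, so nowhere-density only requires exhibiting, for each $(\gamma,[F])\in\Kk'$, a single nearby pair outside $\Kk'$; it does this with one explicit one-parameter family generated by a normal field $f$ satisfying $f(p)=f(q)=0$, $Df(p)=0$, and $Df(q)\ne0$ with nontrivial kernel, which visibly destroys equiangularity at one persisting double point for small $t\ne0$. Your route buys a cleaner generic conclusion at the cost of the Grassmannian transversality bookkeeping; the paper's is leaner and avoids any parametric transversality beyond what the earlier sections already established. Both are valid.
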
 

\begin{proof}
If $\dim(N)>4$, the theorem is true
even without the condition ``and if $F(\Sigma)$ is the support of an area-minimizing
integral cycle"; see Corollary~\ref{strong-selfie-corollary}.
Thus we may assume that $N$ is a $4$-manifold.

If $\Sigma$ is the support of a $2$-dimensional locally area-minimizing
 integral cycle
in the $4$-manifold $N$ and if $P$ and $P'$ are planes in $\Tan(N,p)$ that are tangent to $\Sigma$ at a point $p\in N$,
then the pair $(P,P')$ has the following property~\cite{morgan}:
\begin{equation}\label{planes-property}\tag{*}
\parbox{11cm}{There is a complex structure $J$ on $\Tan(N,p)$ 
compatible with the conformal
structure on $N$ such that
$JP=P$ and $JP'=P'$.   Equivalently, $\dist(x,P')$ is independent of $x$ for $x\in P$ with $|x|=1$.}
\end{equation}

As in \S\ref{bumpy-section}, we let $\MM$ be the space of pairs $(\gamma, [F])$ where $\gamma\in C^\infty(N)$
and where $F$ is a smooth, simple, $e^{\gamma}g_0$-minimal immersion of a
closed $2$-dimensional manifold into $N$.

Let $\Kk$ be the set of $(\gamma, [F])$ in $\MM$ such that $F$ is {\bf not} strongly self-transverse.

If $(\gamma, [F])\in \MM$ and if $F$ is strongly transverse, then $F$ has double points but no
triple points.
Let $\Kk'$ be the set of $(\gamma, [F])$ in $\MM\setminus \Kk$ such that 
\begin{enumerate}
\item $F$ is strongly self-transverse,
\item  $F$ is not an embedding, and 
\item at each self-intersection,
the two tangent planes have the property~\eqref{planes-property}.
\end{enumerate}
Note that $\Kk$ is a closed subset of $\MM$ and that $\Kk'$ is a relatively closed
subset of the open set $\UU:= \MM\setminus \Kk$.

It suffices to show that $\Pi(\Kk\cup \Kk')$ is meager in $C^\infty(N)$.  Since $\Pi(\Kk)$ is meager
in $C^\infty(N)$ (by Theorem~\ref{strong-selfie-theorem}),
 it suffices to show that $\Pi(\Kk')$ is meager in $C^\infty(N)$.
By Corollary~\ref{bumpy-corollary},
 it suffices to show that if $\mathcal{O}$ is an open subset of $\MM_\textnormal{reg}$
that contains a point $(\gamma,[F])$ in $\Kk'$, then
$\mathcal{O}$ also contains a point not in $\Kk'$.

By replacing the background metric $g_0$ by $e^{\gamma}g_0$, we can assume that $\gamma=0$.

By definition of $\Kk'$, there are distinct points
$p$ and $q$ in the domain $M$ of $F$ such that $F(p)=F(q)$ and such that the two tangent planes
to $F(M)$ at $F(p)=F(q)$ belong to $\Pp$.    Since $(\gamma,[F])\notin \Kk$,  the two planes
cross transversely.

Let $f$ be any smooth normal vectorfield on $F$ such that 
\begin{equation}\label{skew}
\begin{gathered}
 f(p)=f(q)=0, \\
 Df(p)=0, \\
 Df(q)\ne 0, \,\text{and} \\
 Df(q)v=0\, \text{for some nonzero vector $v$}.
 \end{gathered}
\end{equation}

Now suppose that $t\mapsto F_t$ is a one-parameter family of immersions with $F_0=F$ and with
$(d/dt)_{t=0}F_t=f$.  By transversality,  there are one-parameter families $p_t$ and $q_t$ with
$p_0=p$ and $q_t=q$ such that $F_t(p_t)=F_t(q_t)$ for $t$ near $0$.
For all sufficiently small $t\ne 0$, \eqref{skew} implies
that the two tangent planes to $F_t(M)$ at $F_t(p_t)=F_t(q_t)$ do {\bf not} have 
the property~\eqref{planes-property}.

It remains only to show that we can choose $f$ and the family $F_t$ so that
$(\gamma_t, [F_t]) \in \MM$ for some smooth $1$-parameter family $\gamma_t\in C^\infty(N)$ 
with $\gamma_0=0$.
For then we will have for all sufficiently small $t\ne 0$ that $(\gamma_t, [F_t])$ is in $\mathcal{O}$ but
not in $\Kk'$.  (It is not in $\Kk'$ because $F_t(M)$ has pair of tangent planes that violate the
property~\eqref{planes-property}.)

As in \S\ref{notation-section},
 we let $W$ be an open subset of $N$ such that $U:=F^{-1}(W)$ contains a point from each component
of $N$ and such that $F|_U$ is an embedding.  (Such a $W$ exists since $F$ is simple.)
By the Runge-Type Theorem~\ref{runge-type-theorem}, there exists a smooth normal vectorfield $f$ to $F$
such that $Jf$ is supported in a compact subset of $U$ and such that~\eqref{skew} holds.
By Theorem~\ref{families-theorem}, there exist $\eps>0$ and smooth one-parameter families
\begin{align*}
   &t\in (-\eps,\eps)\mapsto \gamma_t \in C^\infty(M), \\
   &t\in (-\eps,\eps)\mapsto F_t \in C^\infty(M,N)
\end{align*}
such that $(\gamma_t, [F_t])\in \MM$, $\gamma_0=0$, $F_0=F$, and $(d/dt)_{t=0} F_t = f$.
\end{proof}

\begin{theorem}\label{first-regularity-theorem}
A generic smooth Riemannian metric $g$ on a manifold $N$ has the following property.
If $T$ is a $2$-dimensional, mod $2$ cycle that minimizes $g$-area in its mod $2$ homology class,
or, more generally, that is locally $g$-area-minimizing, then $T$
is a smooth embedded minimal surface with multiplicity $1$.
\end{theorem}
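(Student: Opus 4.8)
The plan is to run the proof of the integral case almost verbatim, replacing every geometric-measure-theory ingredient by its mod $2$ analogue; the only real content is checking that those analogues hold. First I would dispose of the easy dimensions: for $\dim(N)\le 3$ a locally area-minimizing mod $2$ cycle is codimension $\le 1$ and hence already a smooth embedded minimal surface, so assume $\dim(N)\ge 4$. The interior regularity theory of Chang~\cite{Chang} and De~Lellis--Spadaro--Spolaor~\cite{DeLellis-1,DeLellis-2,DeLellis-3} applies to mod $2$ minimizers as well: near any point one may orient $T$ on a small ball $\BB$, and since any integral current in $\BB$ with the prescribed boundary reduces mod $2$ to an admissible competitor whose mass does not increase under reduction, $T\llcorner\BB$ is an \emph{integral} area-minimizing current there. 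Hence $\spt T=F(M)$ for a simple branched minimal immersion $F\colon M\to N$ of a closed $2$-manifold (with $M$ and its normal bundle possibly non-orientable, which causes no trouble since none of the machinery of \S\S3--8 uses orientations). By Moore's bumpy metrics theorem~\cite{moore-bumpy,moore-bumpy-correction}, for generic $g$ conformal to $g_0$ there are no branch points, so $\spt T=F(M)$ for a simple $g$-minimal immersion $F$; intersecting with the generic set from Theorem~\ref{strong-selfie-theorem} we may also take $F$ strongly self-transverse. For $\dim(N)\ge 5$, Corollary~\ref{strong-selfie-corollary} already forces $F$ to be an embedding, so only $\dim(N)=4$ remains.

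In the case $\dim(N)=4$, strong self-transversality means $F$ has only transverse double points. At such a point $p$, with tangent planes $P,P'\subset\Tan(N,p)$, the local-orientation argument above shows that on a small ball the integral current $[D_1]+[D_2]$ (an orientation of the two sheets) is area-minimizing rel boundary; hence its tangent cone, the union $P\cup P'$, is an integral area-minimizing cone, and Morgan's classification~\cite{morgan} shows the pair $(P,P')$ has property~\eqref{planes-property}. Thus, exactly as in the integral proof, after replacing $g_0$ by $e^\gamma g_0$ so that $\gamma=0$, it suffices to show: if $\mathcal{O}$ is an open subset of $\MMreg$ containing a point $(0,[F])$ such that $F$ is strongly self-transverse, not an embedding, and has property~\eqref{planes-property} at every double point, then $\mathcal{O}$ contains a point lacking this property.

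To produce such a point I would pick distinct $p,q\in M$ with $F(p)=F(q)$, use the Runge-Type Theorem~\ref{runge-type-theorem} to choose a normal vectorfield $f$ on $F$ with $Jf$ supported in a compact subset of the embedded region $U$ and satisfying~\eqref{skew}, and then apply Theorem~\ref{families-theorem} to extend to a one-parameter family $(\gamma_t,[F_t])\in\MM$ with $\gamma_0=0$, $F_0=F$, $(d/dt)_{t=0}F_t=f$. As in the integral case, transversality produces moving double points $F_t(p_t)=F_t(q_t)$, and~\eqref{skew} forces the two tangent planes there to violate~\eqref{planes-property} for all small $t\ne 0$. Applying the Bumpy Metrics Corollary~\ref{bumpy-corollary} to $\Kk\cup\Kk'$, where $\Kk$ is the (meager-image) set of immersions that are not strongly self-transverse and $\Kk'$, relatively closed in $\MM\setminus\Kk$, consists of strongly-self-transverse non-embeddings with property~\eqref{planes-property} at every double point, we get $\Pi(\Kk\cup\Kk')$ meager. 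For $g$ outside this set, every simple $g$-minimal immersion whose image supports a locally area-minimizing mod $2$ cycle is an embedding; and a mod $2$ cycle supported on a smooth embedded surface is that surface with multiplicity $1$ (the coefficients lie in $\ZZ/2$), so the statement follows.

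The only step that is not a literal transcription of the integral argument is the passage to the mod $2$ setting in paragraph one and the mod $2$ version of Morgan's planes condition in paragraph two; I expect this to be the main point requiring care, but both are handled by the single device ``orient $T$ on a small ball, where a mod $2$ minimizer is an integral minimizer,'' so no new PDE or regularity work is needed. (If one wishes to avoid even citing mod $2$ branched-immersion regularity, the local reduction gives it directly from~\cite{Chang,DeLellis-1,DeLellis-2,DeLellis-3}.)
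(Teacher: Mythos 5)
Your overall architecture (strong self-transversality via Theorem~\ref{strong-selfie-theorem}, reduction to $\dim N=4$, the relatively closed set $\Kk'$ of strongly-self-transverse non-embeddings whose double points satisfy the planes condition, and the perturbation built from the Runge-Type Theorem~\ref{runge-type-theorem} and Theorem~\ref{families-theorem} to violate that condition) is exactly the paper's. Where you diverge is in the very first step, and that step contains a genuine gap. The paper does \emph{not} reduce the mod $2$ case to the integral case: it invokes the separate regularity theory for $2$-dimensional locally area-minimizing mod $2$ cycles, according to which such a cycle is already a smoothly \emph{immersed} (unbranched) minimal surface whose two tangent planes at any self-intersection are \emph{orthogonal}; consequently neither the Chang--De Lellis--Spadaro--Spolaor theorem nor Moore's generic-absence-of-branch-points theorem is needed in the mod $2$ case, whereas your argument leans on both.

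The gap is the sentence ``near any point one may orient $T$ on a small ball $\BB$.'' Your comparison argument needs an \emph{integral current} $R$ in $\BB$ whose mod $2$ reduction is $T\llcorner\BB$ and whose mass equals that of $T\llcorner\BB$; only for such an $R$ does the chain of inequalities make sense and only then can the integral regularity theory be applied to it. A measurable choice of orientation of the rectifiable set underlying $T\llcorner\BB$ does yield a rectifiable current of the correct mass reducing to $T\llcorner\BB$, but there is no control on the mass of its boundary: unless the orientation can be chosen consistently on $\spt T\cap\BB$ away from $\partial\BB$, the current $\partial R$ acquires an uncontrolled interior ``folding'' contribution, so $R$ need not be an integral current and the class of integral competitors with $\partial S=\partial R$ is not even well posed. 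Establishing that such a consistent local orientation exists requires knowing the local structure of $\spt T$ --- which is precisely the regularity being proved --- so the reduction as written is circular. This is the crux of why mod $2$ regularity is not a formal corollary of integral regularity, and it is why the paper supplies the mod $2$ input (immersed, with orthogonal tangent planes at double points --- a strictly stronger conclusion than the integral-case property~\eqref{planes-property}) by citation rather than by lifting. Everything downstream of that first step in your proposal is correct and coincides with the paper's argument.
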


Here ``mod $2$ cycle'' means ``flat chain mod $2$ with boundary $0$''
and ``$T$ is  locally area-minimizing'' means that every point in $N$ has a neighborhood
$\BB$ such that the area of $T\llcorner \BB$ is less than equal to the area
of any mod $2$ flat chain $T'$ in $\BB$ with $\partial T'=\partial T$.

Every $2$-dimensional locally area-minimizing cycle mod $2$ is a smoothly immersed
minimal surface, and if $P$ and $P'$ are two distinct tangent planes at a self-intersection point,
the $P$ and $P'$ are orthogonal: $P$ lies in the orthogonal complement of $P'$.
Thus for mod $2$ cycles, neither the Chang-De-Lellis-Spadaro-Spolaor Theorem nor Moore's Theorem about
generic absence of branch points is needed.
Otherwise, the proof is identical to the proof of Theorem~\ref{first-regularity-theorem}.

\nocite{pedrosa-ritore}
\nocite{hoffman-wei}
\newcommand{\hide}[1]{}

\begin{bibdiv}

\begin{biblist}

\bib{calderon}{article}{
   author={Calder\'{o}n, A.-P.},
   title={Existence and uniqueness theorems for systems of partial
   differential equations},
   conference={
      title={Fluid Dynamics and Applied Mathematics},
      address={Proc. Sympos., Univ. of Maryland},
      date={1961},
   },
   book={
      publisher={Gordon and Breach, New York},
   },
   date={1962},
   pages={147--195},
   review={\MR{0156078}},
}

\bib{Chang}{article}{
   author={Chang, Sheldon Xu-Dong},
   title={Two-dimensional area minimizing integral currents are classical
   minimal surfaces},
   journal={J. Amer. Math. Soc.},
   volume={1},
   date={1988},
   number={4},
   pages={699--778},
   issn={0894-0347},
   review={\MR{946554}},
   doi={10.2307/1990991},
}

\bib{DeLellis-2}{article}{
   author={De Lellis, Camillo},
   author={Spadaro, Emanuele},
   author={Spolaor, Luca},
   title={Regularity theory for 2-dimensional almost minimal currents II:
   Branched center manifold},
   journal={Ann. PDE},
   volume={3},
   date={2017},
   number={2},
   pages={Art. 18, 85},
   issn={2524-5317},
   review={\MR{3712561}},
   doi={10.1007/s40818-017-0035-7},
}

\bib{DeLellis-1}{article}{
   author={De Lellis, Camillo},
   author={Spadaro, Emanuele},
   author={Spolaor, Luca},
   title={Regularity theory for $2$-dimensional almost minimal currents I:
   Lipschitz approximation},
   journal={Trans. Amer. Math. Soc.},
   volume={370},
   date={2018},
   number={3},
   pages={1783--1801},
   issn={0002-9947},
   review={\MR{3739191}},
   doi={10.1090/tran/6995},
}

\bib{DeLellis-3}{article}{
   author={De Lellis, Camillo},
   author={Spadaro, Emanuele},
   author={Spolaor, Luca},
   title={Regularity theory for $2$-dimensional almost minimal currents III: blowup},
   journal={Preprint arXiv:1508.05510},
   date={2018},
}
	
\bib{garofalo-lin}{article}{
   author={Garofalo, Nicola},
   author={Lin, Fang-Hua},
   title={Monotonicity properties of variational integrals, $A_p$ weights
   and unique continuation},
   journal={Indiana Univ. Math. J.},
   volume={35},
   date={1986},
   number={2},
   pages={245--268},
   issn={0022-2518},
   review={\MR{833393}},
   doi={10.1512/iumj.1986.35.35015},
}

\bib{lax}{article}{
   author={Lax, P. D.},
   title={A stability theorem for solutions of abstract differential
   equations, and its application to the study of the local behavior of
   solutions of elliptic equations},
   journal={Comm. Pure Appl. Math.},
   volume={9},
   date={1956},
   pages={747--766},
   issn={0010-3640},
   review={\MR{0086991}},
   doi={10.1002/cpa.3160090407},
}

\bib{moore-bumpy}{article}{
   author={Moore, John Douglas},
   title={Bumpy metrics and closed parametrized minimal surfaces in
   Riemannian manifolds},
   journal={Trans. Amer. Math. Soc.},
   volume={358},
   date={2006},
   number={12},
   pages={5193--5256},
   issn={0002-9947},
   review={\MR{2238914}},
   doi={10.1090/S0002-9947-06-04317-0},
}

\bib{moore-bumpy-correction}{article}{
   author={Moore, John Douglas},
   title={Correction for: ``Bumpy metrics and closed parametrized minimal
   surfaces in Riemannian manifolds'' [Trans. Amer. Math. Soc. {\bf 358}
   (2006), no. 12, 5193--5256 (electronic); MR2238914]},
   journal={Trans. Amer. Math. Soc.},
   volume={359},
   date={2007},
   number={10},
   pages={5117--5123},
   issn={0002-9947},
   review={\MR{2320662}},
   doi={10.1090/S0002-9947-07-04438-8},
}

\bib{morgan}{article}{
   author={Morgan, Frank},
   title={On the singular structure of two-dimensional area minimizing
   surfaces in ${\bf R}^{n}$},
   journal={Math. Ann.},
   volume={261},
   date={1982},
   number={1},
   pages={101--110},
   issn={0025-5831},
   review={\MR{675210}},
   doi={10.1007/BF01456413},
}

\bib{sard-smale}{article}{
   author={Smale, S.},
   title={An infinite dimensional version of Sard's theorem},
   journal={Amer. J. Math.},
   volume={87},
   date={1965},
   pages={861--866},
   issn={0002-9327},
   review={\MR{0185604 (32 \#3067)}},
}

\bib{white-space}{article}{
   author={White, Brian},
   title={The space of minimal submanifolds for varying Riemannian metrics},
   journal={Indiana Univ. Math. J.},
   volume={40},
   date={1991},
   number={1},
   pages={161--200},
   issn={0022-2518},
   review={\MR{1101226 (92i:58028)}},
   doi={10.1512/iumj.1991.40.40008},
}

\bib{white-bumpy}{article}{
   author={White, Brian},
   title={On the bumpy metrics theorem for minimal submanifolds},
   journal={Amer. J. Math.},
   volume={139},
   date={2017},
   number={4},
   pages={1149--1155},
   issn={0002-9327},
   review={\MR{3689325}},
   doi={10.1353/ajm.2017.0029},
}

\bib{zhou-multiplicity}{article}{
 author={Zhou, Xin},
 title={On the multiplicity one conjecture in min-max theory},
 date={2019},
 journal={arXiv:1901.01173},
 pages={1--40}
}

\end{biblist}

\end{bibdiv}

\end{document}